\newcommand{\cA}{\ensuremath{\mathcal A}}
\newcommand{\cC}{\ensuremath{\mathcal C}}
\newcommand{\cE}{\ensuremath{\mathcal E}}
\newcommand{\cF}{\ensuremath{\mathcal F}}
\newcommand{\cX}{\ensuremath{\mathcal X}}
\newcommand{\bbE}{{\ensuremath{\mathbb E}} }
\newcommand{\bbN}{{\ensuremath{\mathbb N}} }
\newcommand{\bbP}{{\ensuremath{\mathbb P}} }
\newcommand{\bbR}{{\ensuremath{\mathbb R}} }
\newcommand{\bbZ}{{\ensuremath{\mathbb Z}} }
\newcommand{\bfP}{\mathbf{P}}
\newcommand{\1}{{\ensuremath{\mathbf{1}}} }
\newcommand{\pc}{\ensuremath{p_{\mathrm{c}} }}
\newcommand{\pco}{\ensuremath{p_{\mathrm{c}}^{\mathrm{o}} }}
\theoremstyle{thmstyleone}%
\newtheorem{theorem}{Theorem}
\newtheorem{proposition}[theorem]{Proposition}%
\newtheorem{lemma}[theorem]{Lemma}%
\newtheorem{corollary}[theorem]{Corollary}%
\theoremstyle{thmstyletwo}%
\theoremstyle{thmstylethree}%
\begin{document}

\title[Catalan percolation]{Catalan percolation}


\author[1]{\fnm{Eleanor} \sur{Archer}}\email{archer@ceremade.dauphine.fr}

\author*[2]{\fnm{Ivailo} \sur{Hartarsky}}\email{hartarsky@math.univ-lyon1.fr}

\author[3]{\fnm{Brett} \sur{Kolesnik}}\email{brett.kolesnik@warwick.ac.uk}

\author[3]{\fnm{Sam} \sur{Olesker-Taylor}}\email{oleskertaylor.sam@gmail.com}

\author[4]{\fnm{Bruno} \sur{Schapira}}\email{bruno.schapira@univ-amu.fr}

\author[3]{\fnm{Daniel} \sur{Valesin}}\email{daniel.valesin@warwick.ac.uk}

\affil[1]{\orgdiv{Modal'X, UMR CNRS 9023}, 
\orgname{Universit\'e Paris-Nanterre}, 
\orgaddress{\city{Nanterre}, 
\postcode{92000},  
\country{France}}}

\affil[2]{\orgdiv{Institut für Stochastik und Wirtschaftsmathematik}, 
\orgname{Technische Universit\"at Wien}, 
\orgaddress{\street{Wiedner Hauptstra\ss e 8-10}, 
\city{Vienna}, 
\postcode{A-1040},
\country{Austria}}}

\affil[3]{\orgdiv{Department of Statistics}, 
\orgname{University of Warwick}, 
\orgaddress{\city{Coventry}, 
\postcode{CV4 7AL}, 
\country{United Kingdom}}}

\affil[4]{\orgname{Aix-Marseille Universit\'e, CNRS, I2M, UMR 7373}, 
\orgaddress{ 
\city{Marseille}, 
\postcode{13453},  
\country{France}}}

\abstract{In Catalan percolation, all nearest-neighbour edges $\{i,i+1\}$ along  $\mathbb Z$ are initially occupied, and all other edges are open independently with probability $p$. Open edges $\{i,j\}$ are occupied if some pair of edges $\{i,k\}$ and $\{k,j\}$, with $i<k<j$, become occupied.  This model was introduced by Gravner and the third author, in the context of polluted graph bootstrap percolation.  

We prove that the critical $p_{\mathrm c}$ is strictly between that of oriented site percolation on $\mathbb Z^2$ and the Catalan growth rate $1/4$. Our main result shows that an enhanced oriented percolation model, with non-decaying, infinite-range dependency, has a strictly smaller critical parameter than the classical model. This is reminiscent of the work of Duminil-Copin, Hil\'ario, Kozma and Sidoravicius on brochette percolation. Our proof differs, however, in that we do not use Aizenman--Grimmett enhancements or differential inequalities. Two key ingredients are the work of Hil\'ario, S\'a, Sanchis and Teixeira on stretched lattices, and the Russo--Seymour--Welsh result for oriented percolation by Duminil-Copin, Tassion and Teixeira.}

\keywords{binary tree;
Catalan numbers; 
critical threshold; 
essential enhancements; 
generating function; 
graph bootstrap percolation; 
infinite-range dependency; 
oriented percolation; 
polluted bootstrap percolation}

\pacs[MSC Classification]{60K35; 82B43; 05A15}

\maketitle

\begin{figure}[h]
\includegraphics[width = 0.95\textwidth]{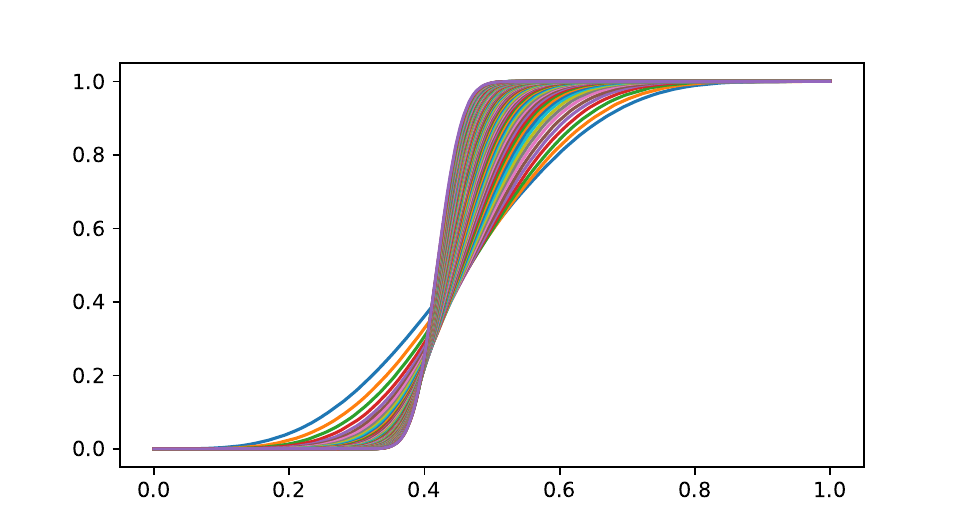}
\caption{Monte Carlo estimates of the conditional probabilities 
that $\{0,n\}$ is occupied given that it is open against $p\in [0,1]$, 
plotted for $n \in \{6, \dots, 100\}$.}
\label{F_sim_stepfn}
\end{figure}

\section{Introduction}
\label{sec:intro}


\subsection{Catalan percolation}
\label{subsubsec:Catalan:background}
Catalan percolation stands at the crossroads of 
bootstrap percolation,
oriented percolation and 
enumerative combinatorics.  
It is, in fact, a particular case of the transitive closure dynamics studied 
by Gravner and the third author \cite{Gravner23} (cf.\ Karp \cite{Karp90}
and Kor{\'a}ndi, Peled and Sudakov \cite{Korandi16}).

The original motivation for the model comes from graph bootstrap percolation, 
considered already by Bollob{\'a}s \cite{Bollobas68} (cf.\ 
Balogh, Bollob{\'a}s and Morris \cite{Balogh12b}), 
an early work in the growing field of bootstrap percolation (see, e.g., Morris \cite{Morris17a} for a recent survey). 
More precisely, Catalan percolation is related to 
polluted bootstrap percolation, beginning with    
Gravner and McDonald
\cite{Gravner97a}, 
which amounts to studying bootstrap percolation on a supercritical percolation cluster. 
Roughly speaking, bootstrap percolation is a monotone cellular automaton, modelling the 
spread of ``infection'' in a network. 
Once a site becomes infected, it stays infected thereafter. 
In polluted bootstrap 
percolation, however, some sites are ``immune,'' and so never become infected. 

More specifically, the inspiration for \cite{Gravner23} began with the 
final paragraph in \cite[p.\ 439]{Balogh12b}, which proposes
a polluted version of $H$-bootstrap percolation. Catalan percolation
is associated with the case that $H$ is a directed triangle. 
As is well known, triadic closure plays an important role in, e.g., social networks. 
See, e.g., Granovetter's \cite{Granovetter73} work on ``the strength of weak ties.''
From this point of view, 
Catalan percolation 
(and the transitive closure dynamics, more generally) 
aims to study 
the interplay between the strength of such ties, 
and that of censorship. 
From a combinatorial perspective, as discussed in \cite{Gravner23}, 
$\pc$ for Catalan percolation is also the
point at which a product can be computed at random, when 
brackets are available with probability $p$. 

Let us now formally define the model. 
Fix a parameter $p\in[0,1]$. Consider the complete graph with vertex set $\bbZ$. We start by declaring each edge $\{i,j\}$ with $j\ge i+2$ \emph{open} independently with probability $p$ and \emph{closed} otherwise. We denote this probability measure by $\bfP_p$. We next recursively define a set of \emph{occupied} edges by induction on the length of the edge. Firstly, all edges of the form $\{i,i+1\}$ for $i\in\bbZ$ are occupied. Secondly, each open edge $\{i,k\}$ such that there exists $j\in(i,k)$ such that $\{i,j\}$ and $\{j,k\}$ are both occupied is also occupied, while closed edges cannot be occupied. For $n\ge 2$, we define
\begin{align}
\label{eq:def:theta}\varphi_n(p)&{}=\bfP_p\left(\{0,n\}\text{ is occupied}\mid \{0,n\}\text{ is open}\right),\\
\label{eq:def:pc}\pc&{}=\inf \left\{p: \liminf_{n\to \infty} \varphi_n(p)>0 \right\},
\end{align}
keeping in mind that $\varphi_n(p)$ is monotone in $p$, but not  in $n$. For convenience, we also set $\varphi_1(p)=1/p$ for any $p\in(0,1]$. In view of Figure \ref{F_sim_stepfn}, we expect that $\varphi_n$ converges to the step function $\1_{p>\pc}$, except possibly at $\pc$. Note that in the related oriented percolation setting, this convergence holds also at $\pc$, see 
Bezuidenhout and Grimmett \cite{Bezuidenhout90}.

In \cite{Gravner23} (see Theorem 1.3), it is shown that Catalan percolation has 
a non-trivial phase transition of constant order.  
(On the other hand, for the full transitive closure dynamics, a transition occurs
at $(\log n)^{-1/2+o(1)}$, see Theorems 1.1 and 1.2 in \cite{Gravner23}.) More precisely, using connections with Catalan structures (binary trees) and oriented percolation, 
it can be seen   
(as explained below)  
that 
\begin{equation}\label{eq:GK}1/4\le \pc\le\pco,\end{equation}
where $\pco$ is the critical probability of oriented site percolation on $\bbZ^2$. We refer the reader to Durrett's  classical review \cite{Durrett84} on oriented percolation in two dimensions (see also 
\cite{Liggett99,Liggett05,Hartarsky22GOSP} for more recent and general accounts). For the reader's convenience, we recall that $0.6967\le \pco\le 0.7491$ \cite{Couronne23,Balister94} (also see \cite{Liggett95} for a slightly weaker upper bound). It is believed that $\pco\approx0.7055$ (see, e.g., \cite{Essam88}).

\begin{figure}
\centering
\includegraphics[scale=0.9]{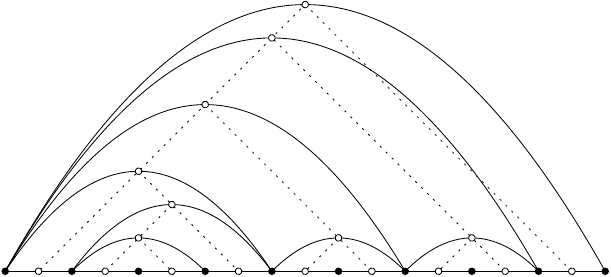}
\caption{Illustration of the binary tree representation of Catalan percolation.}
\label{F_ProdTree}
\end{figure}

The key to \eqref{eq:GK} is the following ``graphical representation'' 
of the Catalan percolation dynamics, 
used in \cite{Gravner23},
from which the connection to binary trees and oriented percolation 
becomes clear. 
For each open or initially occupied edge $\{i,j\}$, with $i<j$, 
place a node $v(i,j)$ at $((i+j)/2,j-i-1)$ in the plane. 
Note that, since all nearest-neighbour edges $\{i,i+1\}$ are initially occupied, 
there are nodes $v(i,i+1)$ at height 0 (i.e., along the $x$-axis) between the integers. 
For all other nodes $v(i,j)$, at some height $j-i-1>0$, we include edges
from $v(i,j)$ to each pair of nodes $v(i,k)$ and $v(k,j)$, with $i<k<j$. 

Clearly, the edge $\{0,n\}$ is occupied by the Catalan percolation 
dynamics if and only if there exists a binary tree rooted at
$v(0,n)$, with leaves $v(0,1),\ldots,v(n-1,n)$. 
See Figure \ref{F_ProdTree}. 
As is well known, the Catalan number 
$C_n=\frac{1}{n+1}\binom{2n}{n}\le 4^n$ counts
the number of 
 such trees. 
Therefore, $p\varphi_n(p)\le 4^np^{n-1}$, leading to the lower bound in \eqref{eq:GK}.

On the other hand, the upper bound in \eqref{eq:GK}
comes from restricting the dynamics in such a way 
that whenever a new edge $\{i,j\}$ is occupied,  
due to some $\{i,k\}$ and $\{k,j\}$, it must be the case
that at least one of $\{i,k\}$ or $\{k,j\}$ is an initially occupied, 
nearest-neighbour edge. 
In other words, the process is forced to ``nucleate,''
in the sense that the maximal length of an occupied edge
can increase by at most 1 in each time step. 
From the perspective of the graphical representation, 
the occupation of $\{0,n\}$, via these restricted
dynamics, 
corresponds 
to 
the presence of an open path from $v(0,n)$ to the $x$-axis 
in oriented site percolation. 
This leads to the upper bound in \eqref{eq:GK}. 
We also note that, from this viewpoint, 
oriented site percolation can be regarded as the local version of 
Catalan percolation, 
in the sense of \cite{Gravner09,Hartarsky24locality}.

The full Catalan percolation dynamics is
richer than either of the two extremes represented in 
\eqref{eq:GK}. 
Indeed, our main result shows 
that $p_c$ lies strictly between the two.

\begin{theorem}
The critical Catalan percolation threshold $\pc$
satisfies 
\[
1/4< \pc<\pco,
\] 
where 
$\pco$ is the critical threshold for oriented site percolation on $\bbZ^2$. 
\end{theorem}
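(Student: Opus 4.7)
The proof splits into two strict inequalities.

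For the lower bound $\pc > 1/4$, write $\theta_n=\bbP_p(\{0,n\}\text{ is occupied})$, so that $\theta_1=1$ and $\theta_n=p\varphi_n(p)$ for $n\ge 2$. The events $A_k=\{\{0,k\}\text{ and }\{k,n\}\text{ are both occupied}\}$ involve only edges with both endpoints in $[0,k]$ or both in $[k,n]$, hence $\bbP_p(A_k)=\theta_k\theta_{n-k}$, and the union bound yields the Catalan-type recursion $\theta_n\le p\sum_k\theta_k\theta_{n-k}$. Via the generating function $T(x)=\sum_n\theta_n x^n$ satisfying $T\le x+pT^2$, this recovers $\pc\ge 1/4$. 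The plan to make this strict is to refine the union bound using positive correlation: since the $A_k$'s are increasing, FKG yields
\[
\theta_n \le p\Bigl(1-\prod_{k=1}^{n-1}(1-\theta_k\theta_{n-k})\Bigr),
\]
and this inequality is strict as soon as two or more $\theta_k\theta_{n-k}$ are positive. I would then combine this refinement with the polynomial decay $\theta_n(1/4)=O(n^{-3/2})$ afforded by the Catalan-number bound, plus a continuity-in-$p$ argument: since each $\theta_n(p)$ is a polynomial in $p$ and $\theta_{n_0}(1/4)\to 0$ as $n_0\to\infty$, a finite-size criterion based on the refined recursion should propagate smallness of $\theta_{n_0}(p)$ to $\liminf_n\theta_n(p)=0$ for $p$ slightly above $1/4$.

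For the upper bound $\pc<\pco$, I would follow the roadmap sketched in the abstract: introduce an enhanced oriented percolation model, stochastically dominated by Catalan percolation, in which the nucleating bonds that realise oriented site percolation on $\bbZ^2$ are supplemented by a second family of longer enhancement bonds, made available through non-nearest-neighbor splittings of the Catalan dynamics. It then suffices to show that the enhanced model percolates at some $p<\pco$. At $p=\pco$, the RSW theorem for oriented site percolation of Duminil-Copin, Tassion and Teixeira provides non-vanishing crossing probabilities of oriented rectangles of arbitrary aspect ratio, supplying long oriented crossings at a macroscopic scale that the enhancement bonds can stitch into a percolating structure at a renormalized scale.

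The main obstacle is that the enhancement carries an \emph{infinite-range, non-decaying} dependency: an occupied Catalan edge of length $n$ depends on the entire edge configuration inside $[0,n]$. This rules out the standard essential-enhancement machinery of Aizenman--Grimmett or differential inequalities. Instead, I would recast the renormalized process as a percolation on a \emph{stretched} lattice in the sense of Hil\'ario, S\'a, Sanchis and Teixeira, whose framework tolerates such dependencies by absorbing them into irregular lattice spacings. The technical crux is verifying the tail bounds on the stretches together with the RSW crossing inputs required by their theorem. Supercriticality of the stretched percolation then yields $\liminf_n\varphi_n(p)>0$ at some $p<\pco$, completing the proof.
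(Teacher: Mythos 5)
Your FKG refinement $\theta_n \le p\bigl(1-\prod_{k=1}^{n-1}(1-\theta_k\theta_{n-k})\bigr)$ is a correct inequality (and at $n=3$ it reproduces exactly the value $2p^2-p^3$ that the paper exploits), but the step that would convert it into $\pc>1/4$ is missing. Smallness of $\theta_{n_0}(p)$ at one or finitely many indices does not ``propagate'': the recursion's kernel always contains the non-small boundary terms $\theta_1=1$, $\theta_2=p$, and whether $\theta_n$ decays is governed by the singularity of the generating function of an entire comparison sequence, not by any finite-size criterion --- and no such criterion is established (or known) for this model, so ``should propagate'' is not an argument. The paper's route is to define a comparison sequence using the exact probabilities up to level $n_0=3$ and the plain union bound beyond, so that its generating function satisfies the algebraic equation $p\,\cC(x)^2-\cC(x)+x-p^3x^3=0$; the radius of convergence is then located as the smallest positive root of the discriminant $4p^4x^3-4px+1$, which yields $\pc^-\ge 0.254>1/4$. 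Your all-$n$ product-form recursion has no algebraic generating function, so you would in any case have to truncate it essentially as the paper does; as written, the analytic step that actually produces a strict bound is absent.

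For $\pc<\pco$ you name the same external ingredients as the paper (an enhanced oriented model dominated by Catalan percolation, the RSW theorem of Duminil-Copin--Tassion--Teixeira, and the stretched-lattice/geometric-defect framework of Hil\'ario--S\'a--Sanchis--Teixeira), but the central mechanism by which the enhancement strictly helps at $p=\pco$ is missing, and RSW is given the wrong role. In the paper the enhancement is very specific (all vertical length-two edges in a given row open simultaneously with probability $q$), and the heart of the proof is the edge-speed step: a Durrett-type coupling shows the right and left edge speeds are strictly monotone in $q$, which combined with $\alpha(\pco,0)\ge 1\ge\beta(\pco,0)$ gives $\alpha(\pco,q)>1>\beta(\pco,q)$; this is what makes long slanted parallelograms crossable with probability close to $1$ on ``good'' rows, and such high-probability crossings are precisely the input the renormalisation onto the geometric-defect model requires. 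The critical RSW estimate is \emph{not} what makes the enhancement useful: it only provides crossings with probability bounded below by a constant, and is used to traverse ``bad'' rows (where no length-two edges are available) at a cost $\varepsilon^{k}$ with a geometric tail, matching the defects of the HSST model; $1$-dependence plus Liggett--Schonmann--Stacey then restores independence. Without an edge-speed (or some substitute essential-enhancement) argument, your renormalisation has no source of high-probability crossings at $p=\pco$, so the reduction to the stretched-lattice theorem cannot be carried out as proposed.
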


In fact, we will prove a more detailed result, Theorem \ref{th:main} below, 
which requires some additional preparation.

As it is common in percolation (see Grimmett's monograph \cite{Grimmett99} and, e.g., 
the recent work of Duminil-Copin, Goswami, Rodriguez, Severo and Teixeira \cite{Duminil-Copin23a}),
we also introduce critical values of subcritical and supercritical exponential decay, as follows: 
\begin{align}
\label{eq:def:p-}\pc^- &{}= \sup \left\{p: \limsup_{n\to \infty} \frac 1n \log \varphi_n(p)<0 \right\},\\
\label{eq:def:p+}\pc^+&{}=\inf\left\{p:\limsup_{n\to \infty} \frac 1n\log (1-\varphi_n(p)) <0\right\}.
\end{align}
Clearly, $\pc^-\le\pc\le\pc^+$ and it is natural to expect that equality holds, but proving this in a model, 
such as Catalan percolation, 
with such intricate dependencies appears quite challenging. Note that, 
as opposed to more standard percolation models, we have 
above $\pc^+$
that any long open edge is occupied with very high probability. With this notation, the Catalan union bound above actually implies $\pc^-\ge 1/4$. Moreover, in \cite[Section 3]{Gravner23}, a Peierls argument was used to prove that 
\begin{equation}
\label{eq:GK2}\pc^+\le 1-2^{-32}.
\end{equation}


\subsection{Strict inequalities and stretched lattices in percolation}
In percolation (see \cite{Grimmett99} for background), once the occurrence of a non-trivial phase transition is established, one of the most natural goals is to determine the critical value $\pc$, or its proxies $\pc^-,\pc^+$. 
It is usually not reasonable to expect $\pc$ to have a simple exact expression, so one seeks to estimate or bound this value. It is often the case, as for Catalan percolation, that a simpler reference model (oriented site percolation in our case) can be used to bound the model of interest. If one seeks to improve on the corresponding inequality (the second one in \eqref{eq:GK}), the most classical and, essentially the only, approach is the Aizenman--Grimmett essential enhancement method, as pioneered in \cite{Aizenman91}. Roughly speaking,  
this method 
gives a precise meaning to the intuition that 
if we add a non-trivial amount of connections to the reference model 
(in a way that is not deterministically useless) 
then this strictly decreases the critical parameter.
This is the case when the enhancement is added in an independent way \cite{Aizenman91} (cf.\ 
Balister, Bollob{\'a}s and Riordan \cite{Balister14}). This method has also been influential beyond the realm of percolation (see, e.g., Taggi \cite{Taggi23}).

In the oriented setting, this essential enhancement method fails. Consequently, even simple questions regarding monotonicity of critical values are either still open or the subject of very recent interest. Andjel and Rolla \cite{Andjel23} used a method corresponding to Steps 2 and 3 in Section \ref{sec:outline} below, in order to analyse the effect of boundary enhancement of the one-dimensional contact process. This, mostly classical, part of the argument can be used to tackle independent essential enhancements to oriented models in $1+1$ dimensions. This was indeed implemented for oriented percolation enhanced by diagonal edges by Terra \cite{Terra25}. Strict monotonicity of the critical parameter with respect to dimension was considered by de Lima, Ungaretti and Vares \cite{DeLima24}, using coupling arguments.

However, in models with long range dependency, proving such strict inequalities between critical parameters is much more challenging. Indeed, the only such result we are aware of, for a model with non-decaying correlations, is the work
of Duminil-Copin, Hilário, Kozma and Sidoravicius \cite{Duminil-Copin18b} on brochette percolation and its recent extension to slabs by Castro, Sanchis and Silva \cite{Castro24}. This is achieved by revisiting the Aizenman--Grimmett approach, based on a Russo formula and a partial differential inequality, relating the derivatives of $\varphi_n$ with respect to the parameter $p$ and an enhancement parameter. Yet, the long range of correlations makes the proof quite delicate. In addition to a quantitative version of the essential enhancement idea, \cite{Duminil-Copin18b} relies on refined properties of critical (unoriented) bond percolation on the plane, perhaps the best understood model of percolation \cite{Grimmett99}, as well as a result of Kesten, Sidoravicius and Vares \cite{Kesten22} on oriented percolation in a random environment. In terms of unoriented percolation, \cite{Duminil-Copin18b} uses Russo--Seymour--Welsh results in conjunction with a bound on the 4-arm critical exponent. A further renormalisation leads to oriented percolation in a random environment, for which \cite{Kesten22} establishes that, if the disorder is sufficiently sparse, percolation is maintained.

The result of \cite{Kesten22} is itself highly non-trivial, and should be put in context. It is related to the celebrated work of Hoffman \cite{Hoffman05} on percolation on stretched lattices. While there have been several works investigating what kind of (long-range) disorder destroys percolation,
the recent work of Hil\'ario, S\'a, Sanchis and Teixeira \cite{Hilario23} will be the most relevant 
in our current context. 
In this work, a simplified multi-scale renormalisation 
approach is proposed, for proving that percolation withstands sparse disorder, recovering the results of \cite{Hoffman05,Kesten22}. 
We note that, in \cite{Hilario23}, a certain model of oriented percolation with geometric defects proves instrumental.

\subsection{Main results}
Our overarching goal, in this work, 
is to further develop tools for proving {\it strict} inequalities for critical 
percolation parameters. We will use Catalan percolation as a study case, 
improving on all of the inequalities in \eqref{eq:GK} and \eqref{eq:GK2}. 
We recall that $\pc^-\le \pc\le\pc^+$, as defined in \eqref{eq:def:pc}, \eqref{eq:def:p-} and \eqref{eq:def:p+}.

\begin{theorem}
\label{th:main}
For Catalan percolation, we have that 
\begin{align}
\label{T_pcLB}
\pc^-&{}> 0.254 
,\\
\label{T_pcUB}\pc^+&{}\le \pco,\\
\label{T_pcp}\pc&{}<\pco.
\end{align}
\end{theorem}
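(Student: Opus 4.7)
For \eqref{T_pcLB} ($\pc^->0.254$), I would sharpen the Catalan union bound underlying $\pc^-\ge 1/4$. Writing $\theta_n=\bbP_p(\{0,n\}\text{ is occupied})$ and splitting the binary tree at its topmost vertex, independence on the disjoint subintervals $[0,k]$ and $[k,n]$ yields
\[
\theta_n \le p\sum_{k=1}^{n-1}\theta_k\theta_{n-k}\qquad(n\ge 2),
\]
so the generating function $\Theta(x)=\sum_{n\ge 1}\theta_n x^n$ satisfies $\Theta(x)\le x+p\Theta(x)^2$, giving a convergence radius at least $1/(4p)$ and hence $\pc^-\ge 1/4$. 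This bound is already strict at $n=3$: the exact value $\theta_3=2p^2-p^3$ is strictly less than the recursive bound $2p^2$. To push the threshold past $0.254$, the plan is to compute $\theta_n$ exactly (as polynomials in $p$) for $n\le N$, substitute these sharper values as initial data into the recursion, and re-solve numerically for the convergence radius. A moderate $N$ should suffice.

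For \eqref{T_pcUB} ($\pc^+\le\pco$), I would use the supercriticality of oriented site percolation above $\pco$: the restricted dynamics embedded in Catalan percolation already give $\varphi_n\ge c>0$ uniformly in $n$, confirming $\pc\le\pco$. To strengthen this to exponential decay of $1-\varphi_n$, the plan is a block renormalization at scale $L$. Splitting $[0,n]$ into $\lfloor n/L\rfloor$ disjoint $L$-blocks, the occupation events of the block edges $\{iL,(i+1)L\}$ involve mutually disjoint sets of potentially-open edges and are therefore independent. A multiscale argument chaining these block events into macroscopic trees, combined with supercritical oriented percolation estimates on many near-independent vertical columns inside the tree triangle, should yield $1-\varphi_n \le e^{-cn}$ above $\pco$.

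The heart of the theorem is \eqref{T_pcp}, the strict inequality $\pc<\pco$. Fix $p=\pco-\varepsilon$ with $\varepsilon>0$ small; the aim is to show Catalan dynamics still percolate despite the restricted oriented percolation being subcritical. The strategy is a multiscale renormalization, in which the additional ``long shortcut'' open edges of the full Catalan model compensate for the slight subcriticality of oriented percolation. The Russo--Seymour--Welsh theorem for oriented percolation of Duminil-Copin, Tassion and Teixeira, applied at $\pco$ and perturbed to $\pco-\varepsilon$, furnishes uniformly positive probabilities of macroscopic oriented crossings of $L\times L$ boxes. Inside each box, long open edges of length of order $L$ are present with overwhelming probability (there are of order $L^2$ candidates, each open with probability $\pco-\varepsilon$) and may be used as roots of sub-trees bridging regions that oriented paths alone cannot traverse. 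Coarse-graining at scale $L$ then produces an oriented percolation process enriched by sparse, non-decaying long-range dependencies---precisely the setting of the stretched-lattice framework of Hil\'ario, S\'a, Sanchis and Teixeira. Invoking their percolation criterion then yields percolation at $p=\pco-\varepsilon$, hence $\pc<\pco$.

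The main obstacle is the dependence between the base oriented percolation structure and the shortcut edges, which share the same underlying open-edge field. Unlike the brochette setting of Duminil-Copin--Hil\'ario--Kozma--Sidoravicius, where Aizenman--Grimmett differential inequalities carry the argument, here the strategy must be structural. I would arrange the multiscale construction so that, at each scale, the edges certifying oriented crossings live on a sub-$\sigma$-algebra disjoint from the one generated by the shortcut edges used to enhance the process at that scale. The interval structure of Catalan percolation---the occupation of $\{i,j\}$ depends only on open edges whose both endpoints lie in $[i,j]$---is precisely what enables such a separation, allowing the stretched-lattice result to be applied without recourse to differential inequalities.
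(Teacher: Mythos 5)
Your plan for \eqref{T_pcLB} is essentially the paper's argument: the paper feeds the exact values $\theta_n$ for $n\le 3$ (in particular $\theta_3=2p^2-p^3$) into the same union-bound recursion, and then determines the radius of convergence not numerically but algebraically, via the discriminant $4p^4x^3-4px+1$ of the quadratic satisfied by the generating function; the bound $0.254$ is the smallest positive root of $4p^4-4p+1=0$. Your sketch for \eqref{T_pcUB} is in the right spirit (disjointness/independence of occupation events on disjoint intervals plus supercritical oriented percolation estimates), but as stated the block-chaining step does not produce exponential closeness to $1$: each block edge $\{iL,(i+1)L\}$ is open only with probability $p$, so insisting on a fixed macroscopic tree of block edges is exponentially \emph{unlikely}. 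The paper instead makes a single midpoint split: with probability $1-e^{-cn}$ there is some $a\in[7n/16,9n/16]$ with both $\{0,a\}$ and $\{a,n\}$ occupied via the nucleation (oriented) dynamics, using Durrett's exponential death bound and the large-deviation bound for the density of the infinite cluster, with independence coming from the two disjoint triangles; the union over the $\sim n/8$ choices of $a$ is what yields the exponential rate.

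The genuine gap is in your plan for \eqref{T_pcp}. The proposed enhancement mechanism --- open edges of length of order $L$ used as ``roots of sub-trees bridging regions'' --- does not work, because an open Catalan edge is useful only once it is \emph{occupied}, and occupying a length-$\ell$ edge requires an occupied binary tree spanning its entire interval. At $p\le\pco$ you have no lower bound on that probability other than the statement you are trying to prove (circularity), and via the restricted nucleation dynamics alone the occupation probability of a length-$L$ edge decays in $L$ below $\pco$, so the $L^2$ open candidates in a box are of no direct use. This is precisely why the paper enhances with length-two edges only: $\{j,j+2\}$ is occupied as soon as it is open, since the two nearest-neighbour edges are always occupied, so the enhancement costs nothing; after splitting $p=1-(1-p')^2$ into two independent Bernoulli fields to decouple, Catalan percolation dominates an oriented model in which, independently for each even row, \emph{all} vertical length-two jumps are simultaneously open with probability $q=p'$ --- an infinite-range, brochette-like enhancement.

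Your sketch is also missing the engine that converts this enhancement into strict supercriticality, and it misplaces the RSW input. The paper works \emph{at} $p=\pco$: a Durrett-style subadditivity and monotonicity argument shows the right and left edge speeds of the enhanced model satisfy $\alpha(\pco,q)>1>\beta(\pco,q)$ for any $q>0$, which gives high-probability crossings of long slanted parallelograms in ``good'' rows of the environment of length-two edges; the Duminil-Copin--Tassion--Teixeira result is applied at $(\pco,0)$ only to cross ``bad'' row-intervals at a cost exponential in their length; the renormalized process is then dominated below by oriented percolation with geometric defects (Hil\'ario--S\'a--Sanchis--Teixeira), with Liggett--Schonmann--Stacey restoring independence, and only at the very end is $p$ decreased by continuity of finitely many crossing probabilities. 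By contrast, your quantifier order ``RSW at $\pco$ perturbed to $\pco-\varepsilon$ gives uniformly positive crossings of $L\times L$ boxes'' fails for fixed $\varepsilon>0$ as $L\to\infty$ (subcritical crossing probabilities vanish); the strict inequality has to be manufactured at criticality first and perturbed afterwards. Your closing observation about separating the $\sigma$-algebras of the crossing certificates and of the enhancement is sound and is indeed used in the paper's coupling, but on its own it does not supply the missing drift/renormalisation mechanism.
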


In Section \ref{sec:lower}, we prove 
\eqref{T_pcLB}, via a generating functions approach, which accounts for correlations  
that are omitted in the simple Catalan union bound, discussed above. 

The inequality \eqref{T_pcUB} requires only relatively standard oriented percolation results. 
The short proof of this fact is presented in Section \ref{sec:upper}. 

The proof of \eqref{T_pcp} is the most innovative part of our work. A detailed outline is given in Section \ref{sec:outline} below, 
but let us also make some brief remarks here. 
In Section \ref{sec:main}, we show that, to establish a strict inequality, 
it suffices to introduce only a small amount of the additional 
Catalan percolation dynamics, namely, edges of length two. 
Perhaps the most remarkable feature of our proof is that it does not use 
any form of the Aizenman--Grimmett differential inequality approach to essential enhancements, 
as opposed to \cite{Duminil-Copin18b}. 
We also avoid the use of critical exponent inequalities, which are unavailable in our oriented setting. On the other hand, we still rely on Russo--Seymour--Welsh theory at criticality, which was
 recently established by Duminil-Copin, Tassion and Teixeira \cite{Duminil-Copin17} in the oriented setting, 
 as well as the oriented percolation with geometric defects in \cite{Hilario23}. Curiously, our proof of \eqref{T_pcp} is purely qualitative, and does not yield a quantitative bound. 
 
 While percolation models with strong dependencies are difficult to tackle, we hope that our approach will broaden 
 the scope of models which are amenable to analysis.

\subsection{Simulations}

We supplement our rigorous results with numerical simulations
in several directions. First, in Figure \ref{F_sim_pcest}, we provide the result of a direct Monte Carlo simulation of the model, determining occupied edges by dynamic programming, 
using the standard increasing coupling of $\bfP_p$ for different values of $p \in [0,1]$.
The results suggest that $\pc \in [0.39, 0.41]$.

\begin{figure}
\centering
\includegraphics[width = 0.8\textwidth]{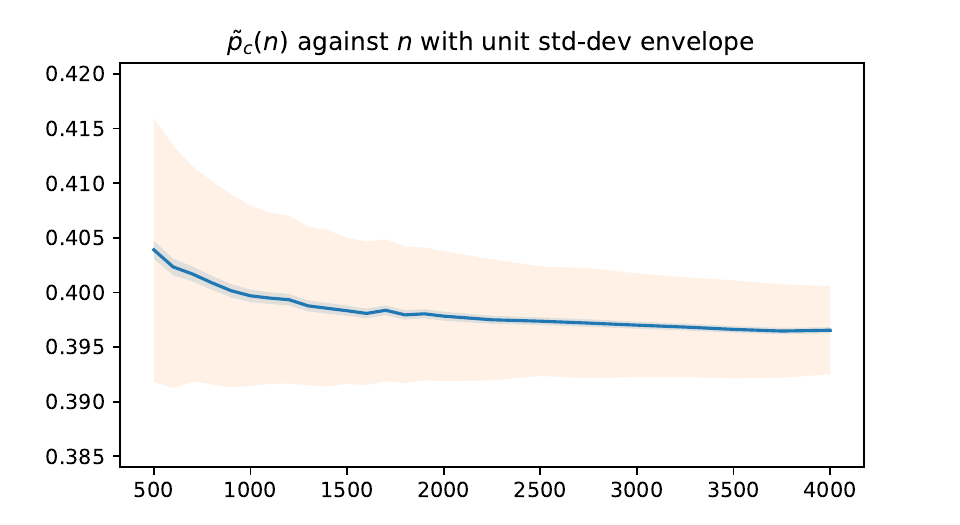}
\caption{We use the standard percolation coupling: edge $\{i,j\}$ is 
assigned an i.i.d.\ $u_{i,j} \sim \operatorname{Unif}(0,1)$, and is open if 
$u_{i,j} \le p$. We condition that $\{0,n\}$ is open. For a given realization, 
we define $\tilde p_\mathrm{c}(n)$ to be the minimal $p$ such that $\{0,n\}$ 
is occupied. The figure plots estimates of the average of $\tilde p_\mathrm{c}(n)$, 
surrounded by a 
one-standard-deviation envelope, estimated via 2000 Monte Carlo rounds.}
\label{F_sim_pcest}
\end{figure}

In Figure \ref{F_sim_upper}, we display a similar Monte Carlo simulation, 
for the Catalan percolation model truncated as in the proof of \eqref{T_pcp}, 
using only edges up to a certain length in the oriented percolation representation. 
The results clearly suggest that the critical values 
of these truncated models converge to $\pc$, as the truncation goes to infinity.

Concerning the lower bound, in Figure \ref{F_sim_lower}, we perform a semi-rigorous study. 
Instead of the exact values of $\varphi_n(p)$ for small $n$, as in the proof of \eqref{T_pcLB}, we use the Monte Carlo estimates of $\varphi_n(p)$, displayed in Figure \ref{F_sim_stepfn}, and plug them into our rigorous lower bound. In this case, the results suggest that our lower bound sequence does {\it not} converge to $\pc$, 
as one takes higher levels of dependency into account. The reasons for this 
are further discussed in
Section \ref{subsec:further} below.

\subsection{The expected out-degree} 
Let us close 
this introduction 
with some speculation and intrigue. 
Recall that 
$p \varphi_n(p)$ is the probability that $\{0,n\}$ is occupied.
It would appear that the 
expected out-degree of $0$, given 
by the series 
\[
\sum_{n=1}^\infty p\varphi_n(p)
=1+p+2p^2+4p^3+9p^4+21p^5+52p^6+129p^7+335p^8+\cdots,
\]
has positive, integer-valued coefficients. 
If they were to have a combinatorial description, 
then perhaps one could actually locate 
the radius of convergence, and perhaps then 
$p_c$. 

\begin{figure}
\centering
\begin{minipage}{.5\textwidth}
\centering
\includegraphics[width = \textwidth]{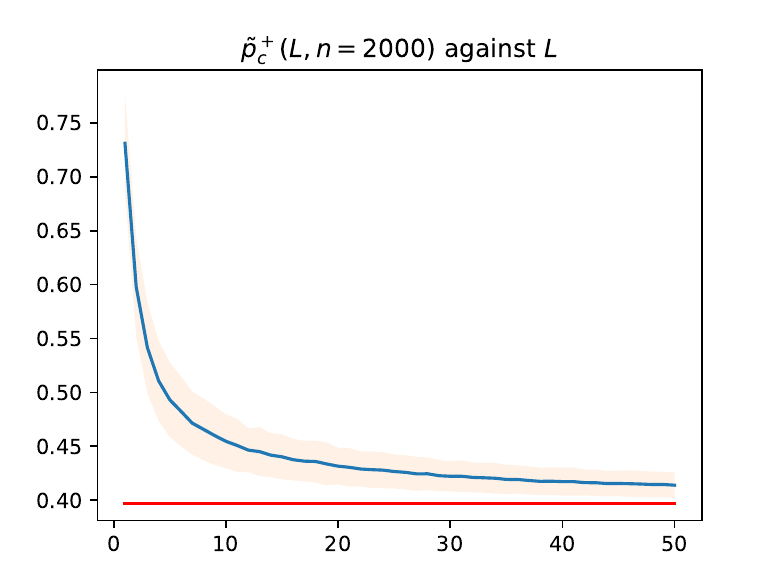}
\captionsetup{width = .9\linewidth}
\caption{We only permit edge $\{i,j\}$ to be occupied via (occupied) 
edges $\{i,k\}$ and $\{k,j\}$ with $|i-k| \le L$ or $|j-k| \le L$. 
We call the resulting threshold $\tilde p_\mathrm{c}^+(L, n)$. 
Clearly, $\tilde p_\mathrm{c}(n) \le \tilde p_\mathrm{c}^+(L, n)$.
We take $n = 2000$, and perform 2000 Monte Carlo estimates, 
and plot (in blue) the mean with a 
one-standard-deviation envelope.
For comparison, we plot (in red) a horizontal line of our 
estimate of $\tilde p_\mathrm{c}(2000) \approx 0.4$.}
\label{F_sim_upper}
\end{minipage}%
\begin{minipage}{.5\textwidth}
\centering
\includegraphics[width = \linewidth]{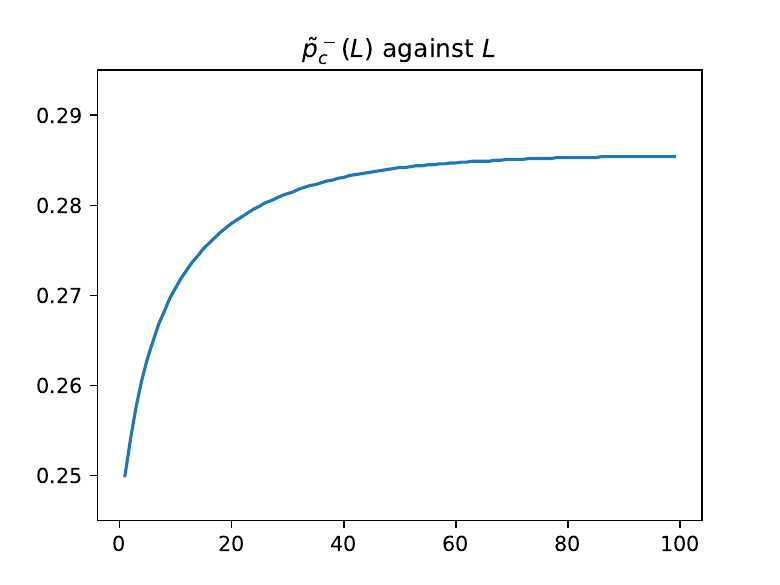}
\captionsetup{width = .9\linewidth}
\caption{We simulate the functions $\varphi_\ell$ for small $\ell \le 100$ 
via $10^6$ Monte Carlo rounds, to precision $10^{-4}$. We plug these 
into our rigorous lower bound developed in Section \ref{sec:lower}: 
the estimate $\tilde p_\mathrm{c}^-(L)$ 
uses these estimates for $\ell\le L$ instead of $\varphi_\ell$. Notice that the curve does \emph{not} seem to converge to $\pc \approx 0.40$.
See Section \ref{subsec:further} for more on this.
For comparison, the real value $\pc^-(1)$ is $0.25$, the Catalan bound.}
\label{F_sim_lower}
\end{minipage}
\end{figure}


\section{Outline of the proof
that \texorpdfstring{$p_{\mathrm c}<\pco$}{pc<pco}}
\label{sec:outline}

In this section, we discuss the main 
ideas behind the proof that 
$p_{\mathrm c}<\pco$ carried out in detail in Section \ref{sec:main}. 
Several steps are involved, as outlined below.

{\bf Step 1 \rm{(Enhanced oriented percolation)}.}
We first introduce a model of oriented percolation with edges $(x,x+(1,0))$, $(x,x+(0,1))$ and $(x,x+(0,2))$, somewhat similar to the (unoriented) brochette percolation of Duminil-Copin, Hil\'ario, Kozma and Sidoravicius \cite{Duminil-Copin18b}. Sites are open with probability $p$ and length 1 edges are always open. For any $n$, the edges of the form $((x,2n),(x,2n+2))$ are either all closed or all open, the latter having probability $q$. For fixed $q$, we can define a critical value $\pc(q)$. It then suffices to prove that for any $q>0$ we have $\pc(q)<\pc(0)=\pco$. 
Indeed, Catalan percolation with parameter $p$ dominates this enhanced oriented percolation model with $q=p$, so that $\pc\le\max(p,\pc(p))<\pco$ for any $p\in(0,\pco)$. To see this, we consider binary trees such that at each level either one of the children is a leaf, or the second child has exactly two descendants (corresponding to length 2 edges). 

{\bf Step 2 \rm{(Edge speed)}.}
A classical object in 2-dimensional oriented percolation is the edge of the process \cite{Durrett84}. The right (resp.\ left) edge $r_{2n}$ (resp.\ $l_{2n}$) is the largest (resp.\ smallest) $x$ such that $\{\dots,-1,0\}\times \{0\}$ (resp.\ $\{0,1,\dots\}\times\{0\}$) is connected to $(x,2n)$ via an open path. A subadditive theorem of Durrett \cite{Durrett80} (also see \cite{Liggett05}) gives the existence of the right edge speed $\alpha(p,q)=\lim_{n\to\infty}r_{2n}/(2n)$ and similarly for the left edge speed $\beta(p,q)$. It is a classical result of Griffeath \cite{Griffeath81} that $\alpha(\pc(0),0)=\beta(\pc(0),0)=1$. Still by classical means \cite{Durrett80}, we prove that $\alpha$ is strictly increasing 
and $\beta$ strictly decreasing in $q$. While this step requires some minor adaptations, the proofs are essentially identical to the ones for the 
classical 
model with $q=0$. This is 
achieved by 
choosing the correct direction, with respect to which to define the edge speeds, so that dependencies are kept perpendicular to the (vertical) time axis and independence in time is preserved.

{\bf Step 3 \rm{(Crossing good times)}.}
We next show that, whenever $\alpha(p,q)\neq -\infty$, there is a large probability to cross a very elongated parallelogram, whose long side has slope $\alpha(p,q)$ and short side is horizontal, from bottom to top. The proof follows the lines of Durrett \cite{Durrett84} and applies also to $\beta(p,q)$, when $\beta(p,q)\neq +\infty$. We apply this result for some $q>0$ fixed and $p=\pc(0)$, so that $\alpha(p,q)>1>\beta(p,q)$ by Step 2. We call the resulting large parallelogram a (right or left) box. We next view the state of length 2 edges as a random environment. The above yields that there is a high probability ``good'' event on the random environment, on which (vertically) crossing a box is likely.

{\bf Step 4 \rm{(Crossing bad times)}.}
If the environment were always good, we would already be done by constructing a 1-dependent (renormalised) oriented bond percolation out of left and right boxes. However, at some times the environment is bad. Let us focus on an interval of bad times. If the interval is not longer than the height $m$ of a box, we can cross it with high probability via a path of slope 1 by Step 3 applied to $q=0$. However, the bad interval could be much longer. In that case, we still ask for a path of slope (approximately) $1$ with fluctuations of order 
$o(m)$ (see Figure \ref{fig:RSW}). In order to lower bound the probability of such paths, we use the box crossing result of Duminil-Copin, Tassion and Teixeira \cite{Duminil-Copin17} applied at $(p,q)=(\pc(0),0)$. This yields that in an interval of bad times, crossing a rectangle of width $o(m)$ and height $k m$ is at least $\varepsilon^{k}$ for some small $\varepsilon>0$ independent of $m$.

{\bf Step 5 \rm{(Oriented percolation with geometric defects)}.}
With the ingredients above, we renormalise the enhanced oriented percolation model to oriented percolation with geometric defects introduced and studied recently by Hil\'ario, S\'a, Sanchis and Teixeira \cite{Hilario23}, via multi-scale renormalisation. In this model, bonds of the oriented square lattice at ``level'' $i\in\bbZ$ are open independently with probability $p^{1+\xi_i}$, where $\xi_i$ is a sequence of i.i.d.\ geometric random variables. The result of \cite{Hilario23} is that this model percolates if the expectation of the geometric variables is sufficiently low and $p$ is sufficiently close to 1. 

In the renormalisation, edges correspond to boxes at good times, while the 
variables $\xi_i$  encode the lengths of bad time intervals. Indeed, Step 3 ensures that bad times are rare and, at good times, boxes are likely to be crossed, while Step 4 gives that bad intervals are crossed at a cost with an exponential tail, independently of the renormalisation (and therefore independently of how likely the good environment is). Furthermore, the renormalisation is performed carefully, so as to keep crossings of bad times for different renormalised vertices independent (disjoint), which allows renormalised edges to be 1-dependent only at good times. Then, a classical result of Liggett, Schonmann and Stacey \cite{Liggett97} can be used to recover independence. Once this renormalisation is complete, we are able to conclude, because the relevant crossing probabilities are all continuous in $p$, and so we may decrease this parameter a little and remain supercritical.

\section{Strict lower bound,  \texorpdfstring{$\pc^- > 0.254$}{pc->0.254}}
\label{sec:lower}

First, we will describe our 
general method for lower bounds in Section \ref{ss_method_lower}. In Section \ref{S_Catalan}, for the purpose of illustration, we 
use this method to prove that~$\pc^- \ge 1/4$. Finally, in Section \ref{S_Catalan+}, we push the method further to show that $\pc^- > 0.254$. 

Let~$\theta_n(p) = p \varphi_n(p)$ be the probability that the edge~$\{0,n\}$ is occupied.


\subsection{Method for lower bound}
\label{ss_method_lower}
Our starting point is expressing~$\pc^-$ in terms of the radius of convergence of a power series. For a sequence~$\{a_n\}$ (with either~$n \ge 0$ or~$n \ge 1$), let~$\mathrm{rad}(\{a_n\}) = 1/\limsup a_n^{1/n}$ denote the radius of convergence of the power series~$\sum_{n} a_nx^n$. Then, recalling the definition of~$\pc^-$ in~\eqref{eq:def:p-}, we have
\begin{equation}\label{eq_pc-radius}
\pc^- = \sup\{p > 0: \mathrm{rad}(\{\theta_n(p)\}) > 1\}.
\end{equation}

Our strategy will be to find 
functions $p \mapsto a_n(p)$, satisfying
\begin{equation}
	\label{eq_an_psi}
	a_n(p) \ge \theta_n(p),\quad p \in [0,1],
\end{equation}
and so that~$\mathrm{rad}(\{a_n(p)\})$ is easy to analyse (by studying the associated generating function). Note that~\eqref{eq_an_psi} gives~$\mathrm{rad}(\{a_n(p)\}) \le \mathrm{rad}(\{\theta_n(p)\})$, so
\begin{equation}
	\label{eq_radius_an}
	\pc^- \ge \sup\{p > 0:\; \mathrm{rad}(\{a_n(p)\}) > 1\}.
\end{equation}

In order to find~$\{a_n(p)\}$ satisfying~\eqref{eq_an_psi}, we will use the recurrence relation
\begin{equation}
	\label{eq_recursive_theta}
	\theta_n(p) \le p \sum_{k=1}^{n-1} \theta_k(p) \theta_{n-k}(p),
\end{equation}
which follows from the definition of an edge being occupied and a union bound. 
More specifically, for
fixed~$n_0 \ge 1$, 
we will define 
$\{a_n^{(n_0)}(p)\}$ by using
the precise probabilities 
$\theta_n(p)$ for small 
$n\le n_0$, and the 
union bound for all larger $n>n_0$. 
Formally, we set
\begin{equation}
	\label{eq_how_to_define_a_n}
	a_n^{(n_0)}(p) = \begin{cases}\theta_n(p),&1\le n \le n_0;\\[.2cm] p\sum_{k=1}^{n-1} a_k^{(n_0)}(p) a^{(n_0)}_{n-k}(p),&n > n_0.\end{cases}
\end{equation}
Comparing this with~\eqref{eq_recursive_theta}, and using induction, it follows  that~\eqref{eq_an_psi} holds.

\subsection{Catalan bound, revisited}
\label{S_Catalan}
We first implement the above method with~$n_0 = 1$ in~\eqref{eq_how_to_define_a_n}. 

Recall that the Catalan numbers are given by~$C_n = \frac{1}{n+1} \binom{2n}{n}$ for~$n \in \mathbb N$, and satisfy
\begin{equation}\label{Catalan}
C_n = \sum_{k=0}^{n-1} C_k C_{n-k-1}, \quad n \ge 1.
\end{equation}
Noting that~$a^{(1)}_1(p) = C_0 = 1$, comparing~\eqref{eq_how_to_define_a_n} and~\eqref{Catalan} and using induction, we see that
\begin{equation*}
	a^{(1)}_n(p) = p^{n-1}  C_{n-1}, \quad n \ge 1.
\end{equation*}
In particular,~$\mathrm{rad}(\{a^{(1)}_n(p)\}) = \tfrac{1}{p}\mathrm{rad}(\{C_n\})$. It is well known that~$\mathrm{rad}(\{C_n\}) = 1/4$ (this can for instance be checked using~$C_n = \frac{1}{n+1} \binom{2n}{n}$ and Stirling's formula). Hence,~$\mathrm{rad}(\{a^{(1)}_n(p)\}) = 1/(4p)$, and now~$\pc^- \ge 1/4$ readily follows from~\eqref{eq_radius_an}.

\subsection{Beyond Catalan}
\label{S_Catalan+}
Taking~$n_0 = 2$ in~\eqref{eq_how_to_define_a_n} would not improve on the above, since~$a^{(2)}_2(p) = a^{(1)}_2(p) = p$, and hence~$a^{(2)}_n(p) = a^{(1)}_n(p)$ for all~$n$ and~$p$.
Therefore, we take~$n_0 = 3$. Note that
\begin{equation}\label{eq_a3_values}
	a^{(3)}_1(p) = 1 = a^{(2)}_1(p),\qquad a^{(3)}_2(p)  = p = a^{(2)}_2(p),\qquad a^{(3)}_3(p) = 2p^2 - p^3 < 2p^2=  a^{(2)}_3(p). 
\end{equation}

We now study~$\mathrm{rad}(\{a^{(3)}_n(p)\})$, which we abbreviate as~$x_3(p)$.
Define the power series
\[\cC(x) = \sum_{n = 1}^\infty a^{(3)}_n(p) x^{n},\]
suppressing the dependence on~$p$.
For~$n \ge 4$, we have~$a^{(3)}_n(p)=p\sum_{k=1}^{n-1} a^{(3)}_k(p) a^{(3)}_{n-k}(p)$. 
Multiplying this by~$x^n$, and summing over~$n \ge 4$, gives
\[\cC(x) - a^{(3)}_1(p)  x - a^{(3)}_2(p) x^2 - a^{(3)}_3(p) x^3 = p \left( \cC(x)^2 - (a^{(3)}_1(p))^2  x^2 - 2 a^{(3)}_1(p) a^{(3)}_2(p)  x^3\right).\]
Then, using~\eqref{eq_a3_values} and simplifying, we obtain
\[p \cC(x)^2 - \cC(x) + x - p^3 x^3 = 0.\]
In other words, the quadratic equation
\[ pX^2 - X + x - p^3x^3 = 0\]
is solved by~$X = \cC(x)$ for any~$x < x_3(p)$. The discriminant of this quadratic equation is
\[\Delta (p,x) = 4p^4x^3 - 4px + 1,\]
and (with~$p$ being fixed) the smallest positive value of~$x$ for which~$\Delta (p,x) = 0$ is~$x = x_3(p)$. 
See, e.g., 
Flajolet and Sedgewick \cite[Lemma VII.4]{Flajolet09} for general theoretical background. 

The above considerations imply that the map $p\mapsto x_3(p)$ is continuous on~$(0,1]$, and that 
$x_3(p) \to x_3(0) =\infty$, 
as $p \to 0$.  It then follows that
\begin{equation}
	\label{eq_completion1}
\sup\{p > 0: x_3(p) > 1\} \ge \inf\{p > 0: x_3(p) = 1\}.
\end{equation}
The set of~$p > 0$ for which~$x_3(p) = 1$ is contained in the set of~$p > 0$ for which~$\Delta(p,x_3(p)) = \Delta (p,1)$. 
Therefore, since~$\Delta(p,x_3(p)) = 0$, 
the right-hand side is larger than or equal to
\begin{equation}
	\label{eq_completion2}
 \inf\{p > 0: \Delta(p,x_3(p)) = \Delta(p,1)\}=\inf\{p > 0: \Delta(p,1) = 0\}.
\end{equation} 
Using these considerations, together with~\eqref{eq_radius_an}, we see that~$\pc^-$ is larger than the smallest positive~$p$ satisfying~$4p^4-4p+1 = 0$, which is larger than~$0.254 > 1/4$.

\subsection{Further iterations}
\label{subsec:further}
Of course, it is possible to
obtain increasingly  better bounds, 
by taking increasingly larger $n_0$ in~\eqref{eq_how_to_define_a_n}. 
Let~$p_m = \sup\{p > 0: \mathrm{rad}(\{a_n^{(m)}(p)\}) > 1\}$, so that, by~\eqref{eq_recursive_theta}, we have~$\pc^- \ge p_m$ for any~$m$.
The sequence $(p_m)_{m\ge1}$ is estimated in Figure \ref{F_sim_lower}. In principle, $\theta_n$ can be written down for arbitrarily large $n$, but it gets ever more complicated. Instead, we used Monte Carlo to estimate $\varphi_m$, and hence $\theta_m$, for $m \le 100$ to obtain Figure \ref{F_sim_lower}. It appears to converge to between $0.28$ and $0.29$, which is much less than our 
numerical estimate $p_c\approx 0.4$.

Roughly speaking, the reason for this 
is that our method accounts only for 
``microscopic'' dependencies. 
That is, even if we plug in the exact
values of $\theta_\ell$, for all $\ell\le n_0$, 
for some large $n_0$, 
into the recursive upper bound 
\eqref{eq_how_to_define_a_n}
on $\theta_n$, we then take $n\to\infty$,
with $n_0$ fixed, in the above analysis. 
As such, this method misses the effect of ``macroscopic'' dependencies. 
For instance, 
note that, crucially, 
it does not account for the fact that, for $n\ge n_0$, the events that $\{0,n\}$ and $\{1,n+1\}$ are occupied are far from being disjoint.


\section{Upper bound, \texorpdfstring{$\pc^+\le \pco$}{pc+<=pco}}
\label{sec:upper}

Recall $\pc^+$ from \eqref{eq:def:p+}. In this section, 
we show that $\pc^+\le \pco$. 

\subsection{Coupling with oriented percolation}
\label{sec:coupling:OP}
{We start by explaining the coupling with oriented percolation discussed in Section \ref{subsubsec:Catalan:background} in more detail. Let $\bbP_p$ denote the probability measure such that each site $(m,n)\in\bbZ^2$ with $m+n$ even is \emph{open} independently with probability $p$. To define the Catalan percolation configuration, for $j\ge i+2$, we declare the edge $\{i,j\}\subset\bbZ$ \emph{open}, whenever the site $(i+j,|j-i|)$ is open.} 
Note that, we are, for convenience, 
considering a slight modification 
(scaled and translated) 
of the 
coupling in Section \ref{subsubsec:Catalan:background}. 
In particular, we now have that sites at ``level'' $k$ represent edges of length $k$. Let $L_k=\bbZ\times\{k\}$ denote the set of vertices with $y$-coordinate $k$.

For $\ell \leq m$ and $v_1\in L_m$, an \emph{open path} from $v_1$ to $L_\ell$ is a sequence of open sites $v_1, v_2, \ldots v_{m-\ell}$ such that $v_i-v_{i-1}\in\{(-1,-1),(1,-1)\}$ for all $1 < i \leq m$. Note that $v_{m-\ell}\in L_{\ell+1}$, if $m\neq \ell$. We denote by $v\to L_\ell$ the event that there exists an open path from $v$ to $L_\ell$. Open paths therefore correspond to sequences of occupied edges, growing in length one unit at each time step; see Figure \ref{F_triangle_perc}. In particular, if there is an open path from the site $\left( i+j, |j-i| \right)$ to the line $L_1$, this implies that the edge $\{i,j\}$ is occupied in Catalan percolation. 

Finally, we recall the critical threshold of oriented site percolation on $\bbZ^2$:
\[
\pco=\inf\left\{p>0:{\liminf_{n\to\infty} \bbP_p\left((1,1)\to L_{-n} \right)}>0\right\}.
\]

\begin{figure}[h]
\centering
\includegraphics[width = 0.9\textwidth]{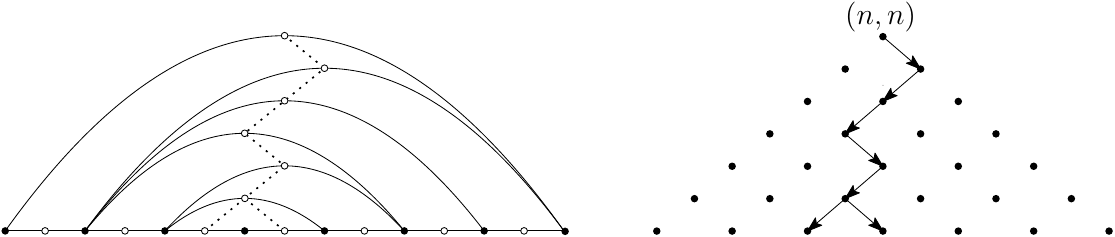}
\caption{An example 
of the oriented percolation coupling, 
with $n=7$.  
\textit{On the left:} a series of occupied edges, 
in which each is obtained by extending the one above it by one 
unit to the left or right. 
\textit{On the right:} the associated path in the oriented site 
percolation model.
Note that the bottom left corner is $(1,1)$ and the bottom 
right corner is $(2n-1,1)$.} 
\label{F_triangle_perc}
\end{figure}

\subsection{Proof}
We now give the proof of \eqref{T_pcUB}. The strategy is to show that there is a very high probability of finding an integer $k$ such that the edges $\{0,k\}$ and $\{k, n\}$ are both occupied in Catalan percolation. In the coupling with percolation, this corresponds to finding a $k$ such that the vertices $\left( k, k\right)$ and $\left( n+k, n-k\right)$ are both connected to the line $L_1$ by an open path.

Let us note that the coupling, and the
general strategy described above, are, in fact, 
the same as in \cite[Section 3]{Gravner23}. 
However, our current proof 
leads to a stronger result. 
In \cite{Gravner23}, 
\eqref{eq:GK2} is proved using a 
Peierls argument. On the other hand, 
our current proof of
\eqref{T_pcUB}
rests on the following two, classical results from oriented (site) percolation.

\begin{theorem}[Exponential death bound \cite{Durrett83}]
\label{th:death}
For any $p>\pco$, there exists $c>0$ such that,  for any $k\le n$, we have that 
\[\bbP_p\left(\left(1,1\right)\to L_{-k},\left(1,1\right) \not\to L_{-n}\right)\le e^{-ck}.\]
\end{theorem}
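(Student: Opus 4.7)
Letting $n \to \infty$ and using monotonicity, the stated bound is equivalent to showing that, for $p > \pco$, the probability that the cluster of $(1,1)$ in oriented site percolation reaches depth $k$ (i.e.\ $(1,1) \to L_{-k}$) and then goes extinct, without ever reaching arbitrary depth, is at most $e^{-ck}$. In other words, writing $\tau$ for the extinction depth of the cluster of $(1,1)$ (with $\tau = \infty$ on survival), the goal is $\bbP_p(\tau \geq k,\, \tau < \infty) \leq e^{-ck}$, i.e.\ the extinction depth has exponential tails on the event of extinction. Since $p>\pco$, one already knows $\bbP_p(\tau = \infty)>0$, so the content is purely about the conditional tail.

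My plan is to run a block renormalisation in the spirit of Bezuidenhout--Grimmett and Durrett's St.~Flour notes. Fix $p > \pco$ and let $\varepsilon > 0$ be arbitrarily small. For scales $L, M$ large enough, I would define a \emph{good block} event $G$, measurable with respect to the sites in a parallelogram of dimensions roughly $L \times M$, such that $\bbP_p(G) \geq 1 - \varepsilon$ and arranged so that chains of good blocks are automatically traversed by open paths of the underlying process. The key input is the Bezuidenhout--Grimmett theorem (supercritical oriented percolation ``percolates in slabs''), which delivers exactly such a $G$. Making $G$ depend only on sites in its own block, the resulting field of good/bad blocks on the renormalised oriented lattice is finite-range dependent; by Liggett--Schonmann--Stacey, it then stochastically dominates an independent oriented Bernoulli site percolation at parameter $1 - \varepsilon'$, with $\varepsilon' \to 0$ as $\varepsilon \to 0$.

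For the dominating independent Bernoulli process at parameter close to $1$, a standard Peierls contour argument yields the desired exponential decay. On the event that the renormalised cluster of the origin reaches depth $j$ but is finite, there must exist a closed ``barrier'' of at least $c_0 j$ renormalised sites separating this cluster from arbitrarily deep levels. Since the number of such barriers of size $\ell$ grows at most exponentially in $\ell$ while each has probability at most $(\varepsilon')^\ell$, summing over $\ell \geq c_0 j$ gives a bound of order $e^{-c' j}$ as soon as $\varepsilon'$ is small. Translating back to the original lattice, extinction at original depth $k$ forces renormalised extinction at depth on the order of $k/M$, and the bound $e^{-ck}$ follows with $c = c(p) > 0$.

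The main technical obstacle is the block construction: one has to verify that the good block event can simultaneously be made to have probability close to $1$, to glue correctly along the renormalised oriented edges, and to depend on a bounded neighbourhood only. The oriented setting is friendlier than the unoriented one because temporal independence comes for free, so only horizontal interactions need to be absorbed by taking $L$ large. Once the block step is in place, the Peierls count and the Liggett--Schonmann--Stacey transfer are essentially routine. An alternative route would be Kuczek's renewal decomposition of the right edge in supercritical oriented percolation, which directly exhibits an exponential tail for the first regeneration time on the survival event and, by the standard duality between survival and extinction for the one-type contact process, yields the same exponential tail for $\tau$ on extinction.
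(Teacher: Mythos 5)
First, note that the paper does not prove this statement at all: it is imported verbatim as a classical result, cited to Durrett(--Griffeath) \cite{Durrett83} (see also \cite{Durrett84}), so the comparison is with the classical proof rather than with anything in this paper. Your outline contains a genuine gap at the transfer step. The Peierls argument on the renormalised lattice bounds the probability that the \emph{renormalised} cluster reaches renormalised depth $j$ and then dies; but the event in the theorem concerns the \emph{original} cluster. Your final claim, ``extinction at original depth $k$ forces renormalised extinction at depth on the order of $k/M$,'' is false: the block construction only guarantees that renormalised survival implies original survival, not that the renormalised cluster tracks the original one. The original cluster can reach depth $k$ through bad blocks (or without ever triggering the dynamic renormalisation), while the renormalised cluster seeded at the origin dies after $O(1)$ renormalised levels; that event has probability of constant order (roughly $\varepsilon'$), not $e^{-ck}$. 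The standard repair is a restart/regeneration argument: each time the true cluster is still alive at depth a multiple of $M$, launch a fresh attempt to create a seed and run the block construction from it; each attempt succeeds with probability bounded below uniformly in the past, success implies survival, so on $\{(1,1)\to L_{-k},\ \tau<\infty\}$ of order $k/M$ attempts must all fail. One must also control the duration of failed attempts (they look into the future), and it is exactly there that the exponential death bound for the nearly-deterministic renormalised process (your Peierls step) is used. Your alternative via Kuczek is likewise incomplete as stated: break points are defined on the survival event, and ``duality between survival and extinction'' does not by itself convert their tail into a tail for the death time on the extinction event.

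Separately, in two dimensions the Bezuidenhout--Grimmett machinery is unnecessary, and the proof behind the citation is lighter. The classical argument of \cite{Durrett83}, as presented in \cite{Durrett84}, uses planarity: the cluster of the origin dies at the first time the right edge of the process started from a left half-line crosses the left edge of the process started from a right half-line, and for $p>\pco$ these edges have distinct asymptotic speeds with exponential large-deviation bounds; summing those bounds over all times at least $k$ gives the $e^{-ck}$ estimate uniformly in $n$. If you want a renormalisation-based proof valid in higher dimensions, the restart argument sketched above (as in the contact-process literature following Bezuidenhout--Grimmett, cf.\ \cite{Bezuidenhout90} and the use of \cite{Liggett97} for decoupling) is the route, but the contour count on the renormalised lattice alone does not suffice.
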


\begin{theorem}[Large deviations of the density of the infinite cluster \cite{Durrett88}]
\label{th:LD}
For any $p>\pco$, there exist $\varepsilon,c>0$ such that, for any integer $n \geq 1$ and finite set $A\subset \bbZ_+$, we have that 
\[\bbP_p\left(|\{a\in A:(a,a)\to L_{-n}\}|\le \varepsilon |A|\right)\le e^{-c|A|}.\]
\end{theorem}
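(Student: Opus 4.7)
The plan is to deduce this large-deviation estimate from a block renormalization of supercritical oriented percolation, together with a Chernoff-type concentration bound for the resulting (essentially independent) Bernoulli variables.

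First, observe that $\bbP_p((a,a)\to L_{-n})$ is monotone non-increasing in $n$ and converges as $n\to\infty$ to the survival probability $\theta(p):=\lim_{n\to\infty}\bbP_p((1,1)\to L_{-n})$, which is positive since $p>\pco$. Consequently,
$$\bbP_p((a,a)\to L_{-n})\;\geq\;\theta(p)\;>\;0\qquad\text{for all }a\geq 0,\; n\geq 1,$$
and in particular the mean of the count $|\{a\in A:(a,a)\to L_{-n}\}|$ is at least $\theta(p)|A|$. The task is thus to establish lower-tail concentration of this count around its mean.

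The central step is a standard block construction for supercritical oriented percolation (see, e.g., Durrett \cite{Durrett84}). Fix a large scale $M$ and partition $\bbZ^2$ into blocks of side $M$. For each block $B$, define a local ``good'' event $G_B$ with the following properties: (i) $G_B$ depends only on the sites in $B$ and a bounded collar, so that $\{G_B\}$ is $k$-dependent for some fixed $k$; (ii) $\bbP_p(G_B)\to 1$ as $M\to\infty$; and (iii) any descending chain of good blocks starting from the block containing $(a,a)$ and reaching past $L_{-n}$ realises the connection $\{(a,a)\to L_{-n}\}$. By the Liggett--Schonmann--Stacey comparison \cite{Liggett97}, for $M$ large enough, the field $\{G_B\}$ stochastically dominates an i.i.d.\ Bernoulli field $\{\hat G_B\}$ of density $1-\eta$, with $\eta$ arbitrarily small.

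To conclude, one views $\{\hat G_B\}$ as a deep supercritical oriented percolation on the renormalized lattice, whose infinite downward cluster has density $\geq 1-\delta$ with $\delta$ arbitrarily small, and whose complement has cluster sizes with exponential tails. Any block $B(a)$ in this infinite cluster realises, via property (iii), the connection $\{(a,a)\to L_{-n}\}$ for every $n\geq 1$. A Peierls/Chernoff estimate on $\{\hat G_B\}$ then gives that, except on an event of probability $e^{-c_0|A|}$, at most $\varepsilon_0|A|$ of the blocks intersecting $A$ fail to lie in the renormalized infinite good cluster; since each block contains at most $M^2$ sites of $A$, this yields at most $\varepsilon|A|$ points $a\in A$ with $\sigma_a:=\mathbf{1}[(a,a)\to L_{-n}]=0$, for $\varepsilon=M^2\varepsilon_0$. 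The main obstacle is the heavy overlap of the dependency regions of different $\sigma_a$'s, which is circumvented by working at the coarser scale $M$, where the good-block field is only $k$-dependent and thus reducible to i.i.d.\ Bernoulli via \cite{Liggett97}; the overlap factor $M^2$ is absorbed into the constants and does not affect the exponential rate $e^{-c|A|}$.
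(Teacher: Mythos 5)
First, note that the paper does not prove this statement at all: it is quoted from \cite{Durrett88}, with the remark that Durrett's argument (stated there for $A$ a horizontal interval, in slightly rotated coordinates) applies verbatim. So your proposal has to stand on its own, and as written it has a genuine gap, located in your property (iii) and in the sentence ``Any block $B(a)$ in this infinite cluster realises \dots the connection $\{(a,a)\to L_{-n}\}$''. A good-block event $G_B$ satisfying (ii), i.e.\ $\bbP_p(G_B)\to 1$ as $M\to\infty$, cannot also satisfy (iii): by the paper's definition of an open path the event $\{(a,a)\to L_{-n}\}$ requires the site $(a,a)$ itself to be open, so its probability is at most $p<1$ (and is of order $\theta(p)$, bounded away from $1$, uniformly in $n$), whereas your renormalized event ``the block of $(a,a)$ lies in the infinite good cluster'' is engineered to have probability close to $1$. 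Concretely, $(a,a)$ may be closed while every block is good, so membership of $B(a)$ in the renormalized infinite cluster does not imply $\sigma_a=1$, and the number of $a$ with $\sigma_a=0$ is not controlled by the number of bad blocks. Indeed, the conclusion you derive --- all but $\varepsilon|A|$ of the points $a\in A$ satisfy $(a,a)\to L_{-n}$, with $\varepsilon=M^2\varepsilon_0$ small --- is provably false: for $A$ an interval, by the law of large numbers at least $\tfrac{1-p}{2}|A|$ of the sites $(a,a)$ are closed with probability tending to $1$, and all of these fail to connect.

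The missing content is precisely the step of attaching each individual site $(a,a)$ to the percolating backbone, which can only succeed with probability bounded away from $1$. A correct route (essentially the classical one) is: introduce truncated local survival events for each $a$, e.g.\ $(a,a)\to L_{a-K}$ inside a box of width $O(K)$, which have probability at least some $\theta'>0$ and are finite-range dependent, so that after \cite{Liggett97} a Chernoff bound shows that with probability $1-e^{-c|A|}$ a positive fraction of the $a\in A$ survive locally; then show that a locally surviving path must merge with the crossing structure of the good blocks, using the planarity (non-crossing) of oriented paths, and control the event that a locally surviving site nevertheless dies before reaching $L_{-n}$ via the exponential death bound of \cref{th:death} or via the block construction itself. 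Your renormalization-plus-Peierls step can serve to build that backbone (and this part of your outline is reasonable), but by itself it says nothing about the density of the \emph{specific} diagonal sites $(a,a)$ that are connected to $L_{-n}$, which is what the theorem asserts.
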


Strictly speaking, \cite{Durrett88} proves this result with $\{(a,a):a\in A\}$ replaced by an interval of the form $\{(a,0):a\in\{1,\dots,|A|\}\}$, but the same proof works. As noted in \cite{Durrett88}, the proof applies to oriented percolation, in addition to the contact process.

\begin{proof}[Proof of $\pc^+\le \pco$]
Fix $p>\pco$ and a large enough integer $n\ge 2$. Define the random sets
\begin{align*}
A&=\left\{a\in\bbZ\cap[7n/16,9n/16]: (a,a) \to L_{\lceil 3n/8\rceil}\right\}, \\
B&=\left\{a\in\bbZ\cap[7n/16,9n/16]: {(n+a,n-a)} \to L_{\lceil 3n/8\rceil}\right\}.
\end{align*}
Roughly speaking, these are 
the positions of the sites around the middle of the left and right sides of the triangle in Figure \ref{F_triangle_perc3}, with fairly long open paths to level $3n/8$. By Theorem \ref{th:LD} we have $\bbP_p(|A|<\varepsilon n)\le e^{-cn}$, for suitable $\varepsilon,c>0$, independent of $n$. 

Notice that $A$ and $B$ are measurable with respect to the state of sites in 
\begin{align*}
T&{}=\bigcup_{k=\lceil 3n/8\rceil}^{\lfloor 9n/16\rfloor} \left\{ (k + 2\ell, k): 0 \leq \ell \leq \lfloor 9n/16\rfloor-k\right\},\\
T'&{}=\bigcup_{k=\lceil 3n/8\rceil}^{\lfloor 9n/16\rfloor} \left\{ (2n-k - 2\ell, k): 0 \leq \ell \leq \lfloor 9n/16\rfloor-k\right\}, 
\end{align*}
respectively, and that these two triangles are disjoint. See Figure \ref{F_triangle_perc3}.

\begin{figure}[h]
\centering
\includegraphics[width = 0.85\textwidth]{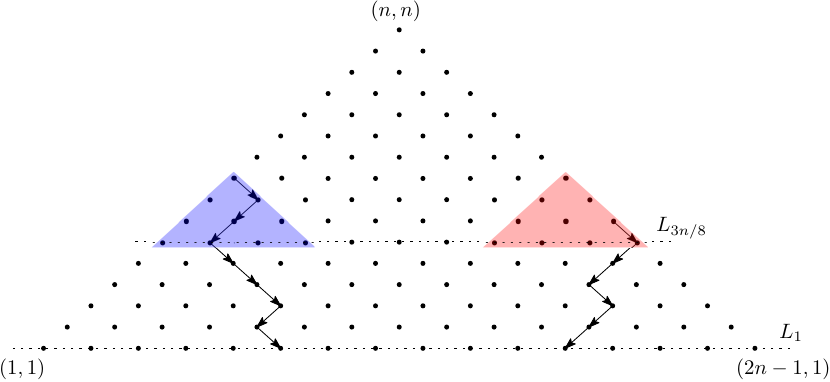}
\caption{An example with $n=16$. Note that 
$A$ and $B$ are measurable with respect to the sets of sites in $T$ and $T'$, 
shaded in blue and red, respectively. 
Here $a=9n/16$ realises the desired event.}\label{F_triangle_perc3}
\end{figure}

Therefore, by independence, symmetry and Theorem \ref{th:LD}, we find that
\[\bbP_p(\not \exists a\in A \cap B|A)\le e^{-c|A|}.\]
Note that, we have, in fact, only used a weaker version of Theorem \ref{th:LD}, going back to \cite{Durrett83} (see also \cite[Section~9]{Durrett84}).

Finally, applying Theorem \ref{th:death} (see again  Figure \ref{F_triangle_perc3}), we obtain
\begin{multline*}
\bfP_p(\{0,n\}\text{ is open, but not occupied})\\
\begin{aligned}
\le{}& p\bfP_p\left(\not\exists a\in\bbZ\cap[7n/16,9n/16]: \{0,a\} \text{ and } \{a,n\} \text{ {occupied}}\right)\\
\le{}& p\bbP_p\left(\not\exists a\in\bbZ\cap[7n/16,9n/16]:(a,a)\to L_1,(n+a,n-a)\to L_1\right)\\
\le{}& \bbP_p\left(|A|<\varepsilon n\right)+\bbP_{p}\left(|A|\ge \varepsilon n,\not\exists a\in A \cap B \right) \\
&{}+ \bbP_p\left(\exists (i,j) \in T\cup T'
: j \geq \tfrac{7n}{16}, (i,j) \to L_{\lceil 3n/8\rceil}, (i,j) \not\to L_1\right)\\
\le{}& e^{-cn}+e^{-c\varepsilon n}+n^2e^{-c(\lfloor n/16\rfloor-1)}.\end{aligned}\end{multline*}
Since $n$ can be taken arbitrarily large, with $c,\varepsilon>0$ fixed, this 
concludes the proof.
\end{proof}

\section{Strict upper bound, 
\texorpdfstring{$\pc<\pco$}{pc<pco}}
\label{sec:main}

As outlined in Section \ref{sec:outline}, the proof of \eqref{T_pcp} relies on a certain model of {\it enhanced}  oriented site percolation on $\mathbb Z^{2}$, which, roughly speaking, is the usual oriented site percolation model, but with the possibility of opening some vertical edges of length two. The interesting feature (and difficulty) of this model is that these additional edges are strongly correlated. In fact, in each row, we will open {\it all} such edges with some positive probability
(or else they are all closed), independently of other rows. Our main result is that, no matter how small this probability is, this strictly decreases the critical parameter for the existence of an infinite, open path starting from the origin.

\subsection{Enhanced oriented percolation} 
In this subsection, we perform the first step of Section \ref{sec:outline}. Namely, we define our auxiliary model of interest more precisely and state our main result concerning its behavior. Fix two parameters $p,q\in [0,1]$. All sites $(x,n)\in \mathbb Z^{{2}}$ are {\it open} with probability $p$,  independently of each other, and 
all oriented edges $((x,n),(x+1,n))$ and 
$((x,n),(x,n+1))$ 
of length one 
are {\it open} with probability 1. 
Additionally, independently for each $n \in\bbZ$, all the oriented edges $((x,2n),(x,2n+2))_{x\in\bbZ}$ 
of length two 
are \emph{open} 
(all at once)
with probability $q$. Edges and sites which are not open are  \emph{closed}.

A \emph{path} is a sequence of vertices $(x_i,n_i)_{i=0}^k$ such that $((x_i,n_i),(x_{i-1},n_{i-1}))$ is an edge for each $i\in\{1,\dots,k\}$ (regardless whether it is open or closed). The path $(x_i,n_i)_{i=0}^k$ is \emph{open} if all its edges $((x_i,n_i),(x_{i-1},n_{i-1}))_{i=1}^k$ are open and the sites $(x_i,n_i)_{i=1}^k$ are open (if $k=0$, the path is open by convention). In other words, a path is open if all its
edges and vertices are open, except possibly
the first vertex. (We allow this possibility 
for technical convenience, as then 
we can concatenate paths independently.)

A path is called \emph{simple} if it is open and if, whenever 
an edge of length two is used, say $((x,2n),(x,2n+2))$, 
the vertex $(x,2n+1)$ is closed. 
That is, length-two edges are only used if necessary. 
Note that, given any two vertices, if there exists an open path between them, there also exists a simple path between them, and so we can restrict our attention to simple paths. 
This will be useful, as 
two simple paths cannot cross without sharing at least one vertex.

We denote the law of this model by $\mathbb P_{p,q}$. Note that it can be seen as a probability measure on $\{0,1\}^{\mathbb Z^{2}}\times \{0,1\}^{\mathbb Z}$. 
We write $(x,n)\rightarrow (y,m)$ for the event that there exists an open path from $(x,n)$ to $(y,m)$. Likewise, $(x,n)\rightarrow \infty$ denotes the event that there exists an infinite open path starting from $(x,n)$. Also, given $A,B,C\subset \mathbb Z^{2}$, let 
$A\to[B]C$ denote the event that some site in $A$ is connected to some site in $C$ by an open path contained in $B$. {In this notation, we omit $B$ if it is equal to $\bbZ^2$.} Given any $q\in [0,1]$, we define the critical parameter of this model as:
$$\pc(q) = \inf\big\{ p : \mathbb P_{p,q}((0,0)\rightarrow \infty)>0\big\}.$$
Note that, by definition, $\pc(0) = \pco$ is the  critical parameter for the classical model of oriented site percolation. Our main result is the following. 
\begin{theorem} \label{theo:pq}
For any $q>0$, we have that 
$\pc(q) < \pc(0)=\pco$.
\end{theorem}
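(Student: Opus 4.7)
The plan is to follow the multi-scale renormalisation strategy outlined in \cref{sec:outline}. First, I would define the right and left edges $r_{2n}$ and $l_{2n}$ of the enhanced oriented process, started from $\{\dots,-1,0\}\times\{0\}$ and $\{0,1,\dots\}\times\{0\}$ respectively, and apply Durrett's subadditive argument to obtain the edge speeds $\alpha(p,q)=\lim_{n\to\infty}r_{2n}/(2n)$ and $\beta(p,q)=\lim_{n\to\infty}l_{2n}/(2n)$. A minor subtlety is that the length-two edges are perfectly correlated within each row, so the subadditive setup has to be organised along an axis for which dependencies stay perpendicular to the time direction and independence in time is preserved. Griffeath's theorem gives $\alpha(\pco,0)=\beta(\pco,0)=1$, and a direct coupling argument (opening an additional batch of length-two edges can only advance the right edge, and does so strictly with positive probability on a suitable subevent) should show that $q\mapsto\alpha(\pco,q)$ is strictly increasing and $q\mapsto\beta(\pco,q)$ is strictly decreasing. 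Hence for any $q>0$ one has $\beta(\pco,q)<1<\alpha(\pco,q)$.

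Next, I would promote the edge speed into a crossing estimate, following Durrett \cite{Durrett84}: for $m$ large, a very elongated parallelogram of height $m$, whose long side has slope $\alpha(\pco,q)$ (respectively $\beta(\pco,q)$), is crossed vertically with probability close to one. Treating the i.i.d.\ states of the rows of length-two edges as a random environment, I would call a time ``good'' if the nearby environment is typical and ``bad'' otherwise, in such a way that good times are overwhelmingly frequent. At good times, suitably chosen right and left parallelograms (``boxes'') can be concatenated to play the role of edges in a renormalised oriented lattice on which we would like to percolate.

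The hard part is handling intervals of bad times, whose lengths have a geometric tail and can be much longer than a single box. Over a bad interval of length $km$, I would ask only for a path with the classical slope $1$ and transverse fluctuations of order $o(m)$, so the connection stays in a tall thin rectangle of height $km$ and width $o(m)$; the oriented Russo-Seymour-Welsh theory of Duminil-Copin, Tassion and Teixeira \cite{Duminil-Copin17}, applied at $(p,q)=(\pco,0)$, gives that such a crossing has probability at least $\varepsilon^{k}$ for some $\varepsilon>0$ independent of $m$. Feeding this exponential-tail cost into the renormalisation produces exactly the oriented percolation with geometric defects introduced by Hilário, Sá, Sanchis and Teixeira \cite{Hilario23}: at row $i$ of the renormalised lattice, edges are open with probability $p^{1+\xi_i}$ for a geometric defect $\xi_i$ encoding the length of the bad interval crossed there, and their main theorem guarantees percolation when the defects are sufficiently sparse and $p$ is close to one. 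I would choose the scales carefully so that the bad-interval crossings of distinct renormalised edges live in disjoint horizontal strips (making them exactly independent there) while the good-box crossings are only $1$-dependent, and then invoke the Liggett-Schonmann-Stacey domination \cite{Liggett97} to convert this into honest Bernoulli percolation. Finally, since all the crossing probabilities involved depend continuously on $p$, one can perturb $p$ slightly below $\pco$ while keeping the renormalised model supercritical, thus obtaining $\pc(q)<\pco$.
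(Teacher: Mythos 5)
Your proposal is correct and follows essentially the same route as the paper: edge speeds via Durrett's subadditive argument and a coupling giving strict monotonicity in $q$ (\cref{lem_alpha_beta,lem:alpha:strict}), annealed crossings of elongated parallelograms at good times (\cref{lem:crossing}), RSW-based crossings of bad intervals at cost $\varepsilon^k$ (\cref{cor:renorm:bad}), and a renormalisation onto oriented percolation with geometric defects combined with Liggett--Schonmann--Stacey and continuity in $p$ (\cref{prop.final}). The only caveat is that the strict monotonicity of the edge speeds is not just ``a strict advance with positive probability'' but requires Durrett's quantitative argument that once the coupled right edges separate the expected gap stays bounded below, which is exactly how the paper proves \cref{lem_monotone_alpha}.
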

We will prove this result in the remaining subsections{, but let us first deduce \eqref{T_pcp} of Theorem \ref{th:main} from Theorem \ref{theo:pq}.}

\begin{proof}[Proof of \eqref{T_pcp}]
{By Theorem \ref{theo:pq}, we can fix $p<\pco$ such that $\pc(p')<p$, with $p'=1-\sqrt{1-p}$. We couple Catalan percolation with parameter $p$ and our enhanced oriented percolation model with parameters $(p,p')$ as follows, similarly to Section \ref{sec:coupling:OP} (see Figure \ref{fig:coupling 2 edge}). Fix $n\ge 3$. For Catalan percolation, we declare the edges $\{i,j\}$ for $j\ge i+3$ open independently with probability $p$. For enhanced oriented percolation, we declare site $(i,j)$ for $i\ge 0$ and $j\in[0,n-3-i]$ open if and only if the Catalan edge $\{j,n-i\}$ is open. We further consider independent Bernoulli random variables $\xi_j,\xi_j'$ with parameter $p'$ for $j\in\bbZ$. For $j\in\bbZ$, the length two Catalan edge $\{j,j+2\}$ is open if and only if $\xi_j+\xi_j'\neq 0$, which has probability $p=1-(1-p')^2$. For any $j\in\bbZ$, 
to incorporate the enhancement, 
we further declare the edge $((i,2j),(i,2j+2))$ open for all $i\in\bbZ$ if and only if $\xi_{2j}=1$.

It is not hard to check that, if $(0,0)\to(i,j)$ occurs with $i+j\in\{n-4,n-3\}$ and $\xi'_{j}=\xi'_{j+2}=1$, then $\{0,n\}$ is occupied in Catalan percolation.  Indeed, by induction, the Catalan edge corresponding to each site in the path from the origin to $(i,j)$ is occupied. Consider the event that the origin reaches $\ell^1$ distance at least $n-4$ in enhanced oriented percolation:
\[\cX=\bigcup_{i+j\in\{n-4,n-3\}}\{(0,0)\to (i,j)\}.\]
By the above considerations, and independence, we have the uniform bound
\[\bfP_p(\{0,n\}\text{ is occupied})\ge \bbP_{p,p'}(\cX)(p')^2\ge \bbP_{p,p'}((0,0)\to\infty)(p')^2>0.\]
Recalling \eqref{eq:def:pc}, this yields \eqref{T_pcp}, as desired.}
\end{proof}

\begin{figure}[h]
\centering
\includegraphics[width = 0.9\textwidth]{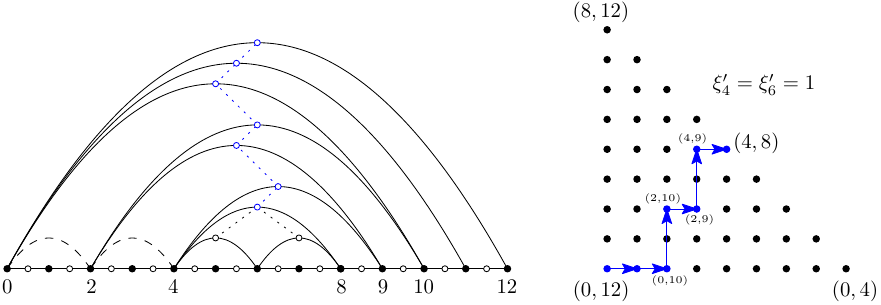}
\caption{An example of the coupling, with $n=12$.
{\it On the left:} A sequence of occupied edges. Each edge of length greater 
than $4$ is obtained by extending the edge underneath either by one 
in either direction (using an initially occupied, nearest-neighbour
edge of the form $\{i,i+1\}$), or by two to the left if its left endpoint is even (using a 
length-two
Catalan edge of the form $\{2i,2i+2\}$). 
The occupied edges are drawn as solid lines. 
The Catalan edges $\{0,2\}$ and $\{2,4\}$
are drawn as dashed lines. 
Note that $\{2,4\}$ allows $\{2,9\}$ to become
occupied after $\{4,9\}$ becomes occupied. 
Similarly, $\{0,2\}$ allows $\{0,10\}$ to become
occupied after $\{2,10\}$ becomes occupied. 
{\it On the right:} The coupled path in the oriented site percolation model, 
along with the relevant values of $\xi_j'$. The blue oriented path on the right is 
a rotation of the blue dotted path on the left by 135 degrees.
Each vertical, length-two edge along this path corresponds to the use
of a length-two Catalan edge.
The final steps from an edge of length four to two edges of length 
two are shown in black.}
\label{fig:coupling 2 edge}
\end{figure}

\subsection{Edge speeds}
The {second} step in the proof of Theorem \ref{theo:pq} (see Section \ref{sec:outline}) is to show that if $p=\pco$ and $q>0$, then the open cluster of the origin spreads out at positive speed as the time (i.e.\ vertical) coordinate increases. This result is mostly classical, but we include its proof in our setting in Appendix \ref{app:classics} for the reader's convenience.

We start with some notation. Fix $p\in(0,1)$ and $q\in [0,1)$. For~$A \subset \mathbb Z$ and~$m,n \in \bbZ$ with~$m \le n$, define
\begin{equation}\label{eq_def_xi}
	\mathcal \xi_{m,n}(A) := \left\{x\in\bbZ:A\times\{m\}\to \{(x,n)\}\right\}.
\end{equation}
In words, $\xi_{m,n}(A)$ is the set of $x$-coordinates of sites
at level $n$ that are accessible from sites
at level $m$, whose $x$-coordinates are in $A$. 

For $n\ge 0$, we also write
\begin{align*}
\xi_{n}(A)&{} := \xi_{0,n}(A),&	r_n &{}:= \max \xi_n(-\bbN),&l_n &{}:= \min \xi_n(\bbN).
\end{align*}
The following is a consequence of Liggett's subadditive theorem (see Appendix \ref{app:edge}).
\begin{lemma}[Existence of edge speeds]
\label{lem_alpha_beta}
	If~$p \in (0,1)$ and~$q \in [0,1)$, there exist~$\alpha(p,q) \in [-\infty,\infty)$ and $\beta(p,q) \in (0,\infty]$ such that almost surely under $\bbP_{p,q}$,
	\begin{align*}
		\frac{r_{{2}n}}{{2}n} &\xrightarrow{n \to \infty} \alpha(p,q),&\frac{l_{{2}n}}{{2}n} &\xrightarrow{n \to \infty} \beta(p,q).
	\end{align*}
\end{lemma}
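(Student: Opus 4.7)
The plan is to apply Liggett's subadditive ergodic theorem to the sequence $r_{2n}$ (and its analogue for $l_{2n}$), following the template laid out by Durrett \cite{Durrett80} for classical oriented percolation, but paying careful attention to the new source of dependency coming from the length-two edges. The pivotal observation, already anticipated in Step 2 of \cref{sec:outline}, is that the length-two edge environment at rows $2n\to 2n+2$ is governed by a single $\mathrm{Bernoulli}(q)$ random variable shared by every column $x\in\bbZ$, but these environments are independent across different $n$. By inspecting the process only at even times, each ``time slab'' $[2m,2n]$ uses disjoint randomness from every other non-overlapping slab, so the usual i.i.d.\ structure in time is preserved.

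For the subadditivity of the right edge, I would introduce an auxiliary family $X_{m,n}:=\max\xi_{2m,2n}(-\bbN)$, where the reachability on the right-hand side uses only the sites at levels $2m+1,\dots,2n$ and the length-two environments for the transitions $2m\to 2m+2,\dots,2n-2\to 2n$. The aim is to establish the deterministic upper bound
\[
r_{2n}\ \le\ r_{2m}+\widetilde X_{m,n},
\]
where $\widetilde X_{m,n}$ is distributed as $X_{0,n-m}\stackrel{d}{=}r_{2(n-m)}$ and is independent of $r_{2m}$. Two ingredients are needed: (i) since $r_{2m}$ is by definition the rightmost reachable site at level $2m$, the site $(r_{2m}+1,2m)$ must be closed (otherwise reachability would extend past $r_{2m}$), so no horizontal extension is possible at level $2m$ itself; and (ii) by monotonicity combined with the Markov property at time $2m$, restarting the process from $(-\infty,r_{2m}]\cap\bbZ$ at level $2m$ using only slab randomness dominates $\xi_{2n}(-\bbN)$, and translation invariance in $x$ then delivers both the distributional identity and the independence of $\widetilde X_{m,n}$ from $r_{2m}$.

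With this subadditive inequality in hand, the remaining hypotheses of Liggett's theorem are easy to verify: time-stationarity and ergodicity of non-overlapping slabs follow directly from the i.i.d.\ structure across pairs of even levels, and integrability $\bbE X_{0,1}^+<\infty$ reduces to the standard observation that each increment $r_{k+1}-r_k$ is stochastically dominated by the length of a run of open sites, yielding exponential tails for $r_2$. Applying Liggett's theorem -- in the version that permits the limit to equal $-\infty$, so as to accommodate the subcritical regime -- then yields $r_{2n}/(2n)\to\alpha(p,q)$ almost surely, with $\alpha(p,q)\in[-\infty,\infty)$. The analogous statement $l_{2n}/(2n)\to\beta(p,q)\in(0,\infty]$ is established by an identical argument with the roles of $\min$ and $\max$ exchanged, using a super-additive variant (noting that $l_{2n}\ge 0$ whenever non-empty forces $\beta\ge 0$). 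The main obstacle lies in confirming that the slab randomness defining $\widetilde X_{m,n}$ is genuinely independent of $r_{2m}$, despite the long-range length-two environments that couple the dynamics across pairs of levels; this is resolved precisely by the even-time restriction, since the Bernoulli environment governing the edges $(x,2m)\to(x,2m+2)$ belongs to the slab $[2m,2(m+1)]$ rather than to $[0,2m]$, and thus does not enter the computation of $r_{2m}$.
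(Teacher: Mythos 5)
Your overall strategy is the same as the paper's: restrict to even times so that the slabs of randomness (including the row environments governing the length-two edges) are independent, set up a subadditive structure for the right edge, verify integrability by domination with runs of open sites, and invoke Liggett's subadditive ergodic theorem (with a superadditive variant for $l_{2n}$). Your observation that $(r_{2m}+1,2m)$ must be closed, so that the restart from $(-\infty,r_{2m}]\times\{2m\}$ cannot profit from horizontal steps at level $2m$ and hence depends only on randomness strictly above that level, is exactly the right way to handle the one genuine overlap of randomness between the past and the restarted process, and it is a point the paper leaves implicit.

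There is, however, a concrete misstep: the claimed identity $X_{0,n-m}\stackrel{d}{=}r_{2(n-m)}$ is false. Since horizontal edges are always open, a path counted in $r_{2k}$ may first slide rightwards along the run of open sites at level $0$ before moving up, whereas your restricted process $X_{0,k}$ (using only sites at levels $1,\dots,2k$) cannot; the correct statement is $X_{0,k}\stackrel{d}{=}r_{2k}-r_0'$, where $r_0'$ is the length of the open run at level $0$ to the right of the origin. As a consequence, the doubly-indexed family you would feed into Liggett's theorem --- diagonal entries $r_{2n}$, off-diagonal entries $\widetilde X_{m,n}$ --- fails the stationarity hypothesis at $m=0$; equivalently, the theorem as you apply it yields the speed of the restricted edge rather than of $r_{2n}$. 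This is precisely why the paper applies the theorem to $(r_{2n}-r_0')_{n}$ instead of $(r_{2n})_n$: since $r_0'$ is an a.s.\ finite geometric variable, dividing by $2n$ recovers the statement for $r_{2n}$, so the gap is easily repaired, but it must be repaired explicitly. Two smaller points: the lemma asserts $\beta(p,q)\in(0,\infty]$, while your argument only gives $\beta\ge 0$ --- strict positivity needs the (easy) extra remark that $l_2\ge 0$ and $\bbP_{p,q}(l_2\ge 1)\ge 1-p>0$, so $\bbE_{p,q}[l_2]>0$; and the domination of $r_{k+1}-r_k$ by a single run of open sites needs care because length-two edges can skip odd levels, which is why it is cleaner to dominate $r_n$ globally by the paper's auxiliary process $r_n'$.
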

{The \emph{edge speeds} $\alpha$ and $\beta$ from Lemma \ref{lem_alpha_beta} satisfy the following strict inequalities proved in Appendix \ref{app:edge}.}
\begin{lemma}[Strict inequalities for edge speeds]
\label{lem:alpha:strict}
If~$q > 0$, then
\begin{align*}
\alpha(\pc(0),q) &{}> 1,&\beta(\pc(0),q) &{}< 1.
\end{align*}
\end{lemma}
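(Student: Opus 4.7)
The plan is to follow Durrett's classical restart argument for strict monotonicity of the edge speed \cite{Durrett80}, adapted to the enhancement parameter $q$. The arguments for $\alpha$ and $\beta$ are parallel, so I focus on $\alpha(\pc(0),q)>1$. A standard increasing coupling of the row indicators $\eta$ gives weak monotonicity $\alpha(\pc(0),q)\geq\alpha(\pc(0),0)=1$. The subadditivity underlying \cref{lem_alpha_beta} also yields
\[\alpha(p,q)=\lim_{n\to\infty}\frac{\mathbb{E}_{p,q}[r_{2n}]}{2n}=\inf_{n\geq 1}\frac{\mathbb{E}_{p,q}[r_{2n}]}{2n},\]
so $\mathbb{E}_{\pc(0),0}[r_{2n}]\geq 2n$ for every $n$. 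In the monotone coupling of the $(\pc(0),0)$- and $(\pc(0),q)$-models, it therefore suffices to show that the gap $\Delta_{n}:=\mathbb{E}[r_{2n}(\pc(0),q)-r_{2n}(\pc(0),0)]$ satisfies $\liminf_{n\to\infty}\Delta_n/n>0$, since then
\[\alpha(\pc(0),q)=\lim_{n\to\infty}\frac{\mathbb{E}_{\pc(0),q}[r_{2n}]}{2n}\geq 1+\liminf_{n\to\infty}\frac{\Delta_n}{2n}>1.\]

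To establish linear growth of $\Delta_n$, I would follow Durrett's restart strategy. Fix a window size $m$ and identify a positive-probability local event $G_k$, depending only on the sites in an $m$-neighbourhood of row $2k$ together with the indicator $\eta_{2k}$ that all length-2 edges at level $2k$ are open, on which these open length-2 edges bridge a bottleneck in the $q=0$ configuration and strictly increase $r_{2(k+m)}(\pc(0),q)-r_{2(k+m)}(\pc(0),0)$ by at least one. Such local patterns exist at $\pc(0)$ because right-cluster bottlenecks with a nearby open site two levels above occur with positive probability. Since the $\eta_{2k}$'s are i.i.d.\ and the combined site-and-$\eta$ field is horizontally ergodic, $G_k$ occurs on a positive-density set of levels $k$; taking $k$'s that are $\Theta(m)$-separated yields an essentially independent family of local improvements.

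The main obstacle is the aggregation step: each gain on $G_k$ is only a bounded constant, while $r_{2n}(\pc(0),0)-2n$ has $o(n)$ fluctuations that could absorb the improvements. Durrett's regeneration argument handles this by organising the coupling into a sequence of renewal intervals on each of which the gap $r_{2n}(\pc(0),q)-r_{2n}(\pc(0),0)$ is strictly increased, and uses the i.i.d.\ renewal structure to extract linear growth of $\Delta_n$. As emphasised in the outline, the crucial structural point allowing this argument to carry over is that the correlated randomness (the row indicators $\eta_{2k}$) lies perpendicular to the time axis, so independence across vertical levels is preserved; this is the ``minor adaptation'' referred to there. The inequality $\beta(\pc(0),q)<1$ follows by the same argument applied to $l_{2n}=\min\xi_{2n}(\bbN)$, using that adding length-2 rows can only enlarge $\xi_{2n}(\bbN)$ and hence decrease its minimum.
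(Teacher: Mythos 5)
Your reduction is sound as far as it goes: weak monotonicity in $q$, the identity $\alpha(\pc(0),q)=\lim_n\bbE_{\pc(0),q}[r_{2n}]/(2n)$, and the fact that a linear-in-$n$ lower bound on the coupled gap $\Delta_n$ would finish the proof (together with $\alpha(\pc(0),0)\ge 1$, which you may cite or prove as the paper does via the diagonal symmetry $\beta(p,0)=1/\alpha(p,0)$ and right-continuity, \cref{lem:alpha:critical,cor_at_crit}). Your local event — a length-$2$ edge at the current right edge that is open only in the enhanced model, with the site two levels up open and the two competing sites closed — is exactly the event the paper uses. The gap is in the aggregation step, which you flag but do not actually solve, and the mechanism you invoke ("renewal intervals on each of which the gap is strictly increased") is not how Durrett's argument works and would not go through as stated. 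Within a single coupling of the $(\pc(0),0)$- and $(\pc(0),q)$-processes, a local improvement near row $2k$ does not deterministically raise $r_{2(k+m)}^q-r_{2(k+m)}^0$ by one, and gains from disjoint windows do not add: the only thing one can extract from a discrepancy is Durrett's comparison inequality \eqref{eqn:Durret rnA rnB inequality} (\cref{lem_monotone_alpha}, Step (i)), which says that once $\xi^0_m(-\bbN)\subset\xi^q_m(-\bbN)$ with strictly larger maximum, the \emph{expected} difference of the right edges at every later time is at least $1$ — not that it accumulates. Consequently a single coupling only yields $\Delta_n\ge 1-o(1)$, which gives nothing after dividing by $n$; and at $p=\pc(0)$ there is no regeneration structure for the coupled pair of edge processes that would make the gains additive (break-point/renewal constructions for the right edge live in the supercritical regime, and even there one would have to show the gap increases across renewal epochs, which is precisely the unproved step).

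The paper's route around this is the parameter-telescoping trick from \cite{Durrett84}: write $q$ as $Mn$ increments of size $q/(Mn)$, couple all the intermediate models monotonically, and for each consecutive pair use (i) the stopping time $\tau$ at which the two right edges first differ, (ii) the geometric bound $\bbP(\tau>n)\le(1-(q'-q)p(1-p)^2)^{\lfloor n/2\rfloor}$ coming from your local event, and (iii) the comparison inequality to get $\bbE[r_{2n}^{q'}-r_{2n}^{q}]\ge 1-(1-(q'-q)p(1-p)^2)^n$ as in \eqref{eq_for_step2}. Summing the $Mn$ increments and letting $n\to\infty$, then $M\to\infty$, gives the quantitative bound $\alpha(\pc(0),q)-\alpha(\pc(0),0)\ge q\,\pc(0)(1-\pc(0))^2$ of \cref{lem_monotone_alpha}, which combined with \cref{cor_at_crit} proves the lemma. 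Without this telescoping over the parameter (or some genuine substitute for making local gains cumulative), your proposal does not establish $\liminf\Delta_n/n>0$, so the key step is missing.
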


\subsection{Crossing boxes in the supercritical regime}
\label{subsec:crossing}
The third step in the proof of Theorem \ref{theo:pq} (see Section \ref{sec:outline}) is to establish that certain boxes are likely to be crossed. For this we need some geometric notation.

Given two vectors $u,v\in \bbR^2$ with $\det(u,v)>0$, we denote by \[R(u,v)=([0,1)u+[0,1)v)\cap\bbZ^2\]
the parallelogram generated by $u,v$. For 
such a parallelogram $R=R(u,v)$, we define
\begin{align*}
\cC_\rightarrow(R)&=\{[0,1)v\to[R]u+[0,1)v\},&
\cC_\uparrow(R)&=\{[0,1)u\to[R]v+[0,1)u\},\\
\cC_\leftarrow(R)&=\{u+[0,1)v\to[R]{}[0,1)v\},
\end{align*}
that is, the events that $R$ is crossed in each of the three directions by an open path. Note that here we use the convention that the start and end points of the crossing paths are allowed to be at Euclidean distance smaller than one from the boundary of $R$, as long as they are inside $R$. Also in the whole remainder of this section, we use the convention that any inequality of the form $\mathbb P_{p,q}(\cC_\uparrow(R))> \theta$, should be interpreted as the fact that the probability to cross {\it any translate} of $R$ in the upward direction is larger than $\theta$ (and similarly for crossings in the other directions $\rightarrow$ and $\leftarrow$). All proofs will generally be done only for one instance of the parallelograms, and it should be clear that, with minor modification in each case, they extend to any translate. The next statement is proved in Appendix \ref{app:box} by classical means from \cite{Durrett84}.
\begin{lemma}[Annealed box crossing]
\label{lem:crossing}
Let~$p \in (0,1)$ and~$q \in [0,1)$ be such that~$0< \beta(p,q) \le \alpha(p,q) < \infty$. Then, for any~$\delta > 0$ and~$\varepsilon > 0$, the following holds for~$n$ large enough. Letting
\begin{equation}
\label{eq:def:u:v}u = (\delta n,0),\quad v = (\alpha(p,q)\cdot n,n),\quad w = (\beta(p,q) \cdot n,n),
\end{equation}
we have
\begin{align*}
\bbP_{p,q}(\cC_\uparrow(R(u,v))) &{}> 1-\varepsilon,& \bbP_{p,q}(\cC_\uparrow(R(u,w))) &{}> 1-\varepsilon.
\end{align*}
\end{lemma}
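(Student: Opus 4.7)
My plan is to follow Durrett's classical approach \cite{Durrett84} for box crossings in supercritical oriented percolation, adapted to the enhanced model at hand. The two assertions are parallel, using respectively the right edge speed $\alpha = \alpha(p,q)$ and the left edge speed $\beta = \beta(p,q)$, so I would focus on $\cC_\uparrow(R(u,v))$.

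First, I would reduce the crossing to a statement about the process started from the bottom side $I = \bbZ \cap [0, \delta n]$ of the parallelogram. Concretely, if this process survives to time $n$ and the rightmost open path from $I$ has position $x_y$ at height $y$ satisfying $x_y \in [\alpha y, \alpha y + \delta n]$ for every $y \in [0,n]$, then $\cC_\uparrow(R(u,v))$ holds. Survival of the process started from $I$ with probability tending to $1$ as $n \to \infty$ would follow from the positive single-site survival probability in the supercritical regime (guaranteed by $\alpha(p,q)$ being finite) together with a standard restart/block argument. The i.i.d.\ structure in the vertical time direction of both the site percolation and the length-$2$ enhancement, emphasized in Step~2 of \cref{sec:outline}, is what makes this step essentially classical.

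With survival in hand, the core step is to control the rightmost open path from $I$ so that it stays inside $R(u,v)$. By \cref{lem_alpha_beta} and translation invariance, the right edge starting from the single point $(\delta n,0)$ satisfies $r^{(\delta n,0)}_y = \delta n + \alpha y + o(y)$ almost surely on survival. A shape-theorem-type fluctuation estimate, of the kind Durrett establishes via subadditivity and exponential tail bounds for the edge process, should give $|x_y - \delta n - \alpha y| = o(n)$ uniformly in $y \in [0,n]$ for the corresponding path position. Since $\delta > 0$ is fixed, for $n$ large this forces $x_y \in [\alpha y, \alpha y + \delta n]$ throughout, yielding the crossing. The argument for $R(u,w)$ is entirely symmetric, using the left edge in place of the right edge, the assumption $\beta(p,q) > 0$ playing the analogous role.

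The main obstacle is the uniform control $|x_y - \delta n - \alpha y| = o(n)$ for the right edge \emph{path} rather than merely for the endpoint at time $n$. In Durrett's setting this is proved by coupling the edge process with a subadditive variable and using exponential-type large deviation bounds on the edge position. In our enhanced model the length-$2$ enhancements are i.i.d.\ across even levels but constant (and hence highly dependent) horizontally within each level; since they nonetheless contribute only a bounded-in-$n$ displacement per level and independence in the vertical time direction is preserved by construction, I expect Durrett's arguments to adapt with only minor modifications. Carrying out these adaptations carefully, rather than introducing any genuinely new idea, is where most of the technical work will go.
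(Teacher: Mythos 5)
The core of your plan hinges on a step that does not hold under the hypotheses of the lemma: survival, with probability tending to $1$, of the process started from the finite interval $I=\bbZ\cap[0,\delta n]$, which you justify by ``positive single-site survival probability in the supercritical regime (guaranteed by $\alpha(p,q)$ being finite)''. Finiteness of $\alpha(p,q)$ has nothing to do with survival from a single site, and the lemma's hypotheses $0<\beta(p,q)\le\alpha(p,q)<\infty$ do not place you in a single-site supercritical regime: they are satisfied, for instance, by classical oriented percolation at $(p,q)=(\pc(0),0)$, where $\alpha=\beta=1$ but the process from a single site dies out almost surely. Worse, at the parameters where the lemma is actually applied, $(p,q)=(\pc(0),q)$ with $q>0$, single-site survival is not available at this stage of the argument --- establishing percolation at (and strictly below) $\pco$ is exactly what the subsequent renormalisation (\cref{prop.final} together with \cref{th:HSST}) is for, so invoking it here is circular in the context of the paper, and as a standalone claim it would require a separate proof that you do not supply. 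Relatedly, identifying the rightmost open path from the finite set $I$ with a path of asymptotic slope $\alpha$ again presupposes survival (the finite-set right edge only agrees with the half-line right edge on the survival event).

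The paper's proof avoids any survival input: the crossing path is taken to be the right-edge path of the process started from the \emph{half-line} $-\bbN\times\{0\}$, which exists at all times almost surely. One then shows, via the almost sure convergence $r_{2n}/(2n)\to\alpha$ from \cref{lem_alpha_beta} together with the exponential estimates of \cref{lem_bound_speed} (uniform-in-time control of $r_m$ and of $\max\xi_{m,n}$ started from shifted half-lines, plus geometric bounds on single-step jumps), that with high probability this path starts within distance $\delta n/2$ of the origin and stays inside a translate of $R(u,v)$; the translation-invariance convention for crossing events then gives the statement. Your remaining ingredients (uniform $o(n)$ control of the path at intermediate heights via subadditivity and exponential tails) are in the right spirit and correspond to \cref{lem_bound_speed}, but note that keeping the path to the right of the left boundary is not a direct fluctuation bound on the edge: it is ruled out by a ``catch-up'' argument --- a path that dips far left at some height $m$ and still ends near $\alpha n$ at height $n$ would force a maximal displacement exceeding speed $\alpha$ by a linear amount, which is exponentially unlikely. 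If you replace your interval-started process by the half-line right edge and add this catch-up step, your outline becomes essentially the paper's proof; as written, the survival step is a genuine gap.
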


\subsection{Crossing bad times: Russo--Seymour--Welsh theory}\label{sec:RSW}

The fourth step in the proof of Theorem \ref{theo:pq} (see Section \ref{sec:outline}) deals with bad times, that is, time intervals when insufficiently many length-two edges are open. Since the length-two edges fail to provide enough help, we 
will completely disregard them. 
As such, this brings us to crossing estimates for the classical oriented site percolation model. These are based on the following result,  which summarises the main content of \cite{Duminil-Copin17}.
\begin{theorem}\cite[Theorem~1.3, Proposition~4.2, Remark~4.4]{Duminil-Copin17}.
\label{th:DCTT}
There exists $\varepsilon>0$ such that, for any $m\in\bbN$ large enough, there exists $w_m\in[\varepsilon m^{2/5},m^{1-\varepsilon}]\cap\bbZ$ such that
\begin{align*}\bbP_{\pco,0}(\cC_\rightarrow(R(3u,v))&\ge \varepsilon,&
\bbP_{\pco,0}(\cC_\uparrow(R(u,3v)))&\ge\varepsilon,&\bbP_{\pco,0}(\cC_\leftarrow(R(3u,v)))&\ge \varepsilon
\end{align*}
with $u=(w_m,-w_m)$ and $v=(m,m)$.
\end{theorem}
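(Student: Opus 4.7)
Since the statement is attributed to Duminil-Copin, Tassion, and Teixeira, my plan is to outline the main ingredients of their Russo--Seymour--Welsh argument for critical oriented site percolation, rather than to reprove the result from scratch. The strategy decomposes into two conceptual steps: establishing polynomial (not exponential) decay of connection probabilities at $\pco$, and converting this decay into uniform positive lower bounds on crossings of the parallelograms in the statement.

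For the first step, I would use the OSSS inequality combined with a sharp-threshold argument tailored to oriented site percolation: choosing a well-designed monotone event (such as the crossing of an appropriately shaped parallelogram) and invoking the absence of an infinite cluster at $\pco$, one rules out exponential decay and obtains that a natural ``characteristic length'' at $\pco$ grows only polynomially. For the second step, one starts from a seed crossing estimate for parallelograms of fixed aspect ratio, of order polynomial in the scale, and bootstraps it into a uniform positive lower bound by an FKG-based induction on aspect ratios. The up-crossing $\cC_\uparrow(R(u,3v))$ is then obtained by stacking three independent crossings of $R(u,v)$ and gluing them with short local paths. The sideways crossings $\cC_\leftarrow(R(3u,v))$ and $\cC_\rightarrow(R(3u,v))$ require the more delicate RSW bootstrap in the narrow direction, transforming vertical crossings into horizontal ones.

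The main obstacle, which is precisely where the Duminil-Copin--Tassion--Teixeira contribution is concentrated, is that oriented percolation lacks both the planar duality and the reflection symmetry that power classical RSW arguments. Their solution is a new RSW bootstrap adapted to the oriented setting, working along the characteristic directions of the lattice rather than along coordinate axes, and exploiting approximate translation invariance parallel to the light-cone diagonals. The two-sided width constraint $w_m\in[\varepsilon m^{2/5},m^{1-\varepsilon}]$ emerges from this quantitative induction: $w_m$ must be large enough at scale $m$ for the RSW gluing steps to succeed with probability bounded below, yet small enough that the parallelogram remains genuinely narrow compared to its length, so that the induction produces a nontrivial lower bound. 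The exponent $2/5$ tracks the polynomial rate produced by the OSSS--sharp-threshold step, as it is propagated through the scale induction.
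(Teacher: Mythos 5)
This statement is not proved in the paper at all: it is imported verbatim as a black box, with the attribution \cite{Duminil-Copin17}*{Theorem~1.3, Proposition~4.2, Remark~4.4}, and the surrounding text says explicitly that it ``summarises the main content'' of that reference. So your decision not to reprove it is exactly the paper's own stance, and to that extent there is nothing to check against.

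However, the sketch you give of the Duminil-Copin--Tassion--Teixeira argument misidentifies its key ingredients, so it should not be presented as a summary of their proof. The OSSS inequality and decision-tree sharp-threshold machinery play no role in that paper (those tools belong to the later sharpness-of-phase-transition works); consequently your closing claim that the exponent $2/5$ ``tracks the polynomial rate produced by the OSSS--sharp-threshold step'' has no basis. In \cite{Duminil-Copin17}, the lower bound $\varepsilon m^{2/5}$ and the upper bound $m^{1-\varepsilon}$ on the width scale $w_m$ come from their own quantitative analysis of a characteristic (typical) width defined through crossing probabilities -- this is precisely the content of their Proposition~4.2 and Remark~4.4 -- obtained by a bespoke RSW-type construction for oriented percolation built on the right-edge process, the Harris/FKG inequality and square-root-trick style gluing of crossings in tilted parallelograms, not by propagating a sharp-threshold estimate through a scale induction. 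If you intend merely to cite the result, as the paper does, you should do only that; if you intend to outline its proof, the outline needs to reflect the actual mechanism (characteristic width, right-edge exploration, crossing gluing along the diagonal directions) rather than the sharp-threshold route you describe.
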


Next, we will adapt the geometry of the crossings provided by Theorem \ref{th:DCTT} to suit our needs.

\begin{corollary}
\label{cor:renorm:bad}
There exists $\varepsilon>0$, such that for any $m\in \bbN$ large enough, there exists an integer $\ell \in [\varepsilon m^{2/5},m^{1-\varepsilon}]$, for which 
\begin{align*}
\bbP_{\pco,0}\left(\cC_\uparrow(R((\ell,0),(m-4 \ell ,m)))\right)&{}\ge \varepsilon,&\bbP_{\pco,0}\left(\cC_\uparrow(R((\ell,0),(m+4 \ell ,m)))\right)&{}\ge \varepsilon.
\end{align*}
\end{corollary}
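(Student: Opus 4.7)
The plan is to build the target upward crossings by concatenating a constant number of the three types of slope-$\pm 1$ crossings supplied by \cref{th:DCTT}, combined via the FKG inequality. The key structural observations are that an upward crossing $\cC_\uparrow(R((w_m,-w_m),(h,h)))$ produces an open path whose $x$- and $y$-displacements agree up to an $O(w_m)$ fluctuation, while a rightward crossing $\cC_\rightarrow(R(3u,v))$ produces a path whose $x$-displacement exceeds its $y$-displacement by exactly $6w_m$ (and symmetrically for $\cC_\leftarrow$ with a $-6w_m$ shift). Since the target parallelograms $R((\ell,0),(m\pm 4\ell,m))$ correspond exactly to a $\pm 4\ell$ deviation from the pure slope-$1$ diagonal, horizontal crossings are precisely the mechanism needed to generate these deviations on top of a slope-$1$ backbone.

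Concretely, I would set $\ell=3w_m$, so that $4\ell=12w_m=2\cdot 6w_m$; this keeps $\ell$ within $[\varepsilon' m^{2/5},m^{1-\varepsilon'}]$ after relabelling $\varepsilon$. To cross $R((\ell,0),(m+4\ell,m))$, I would glue a single upward crossing of a slope-$1$ parallelogram of height close to $m$ with two rightward crossings of $R(3u,v')$-type parallelograms, each contributing $+6w_m$ to the difference $\Delta x-\Delta y$. The sub-parallelograms are positioned so that the short slope-$(-1)$ exit segment of each sub-crossing lies inside the long slope-$1$ start segment of the next; then the FKG inequality applied to the sub-crossings and a bounded number of auxiliary local connection events yields a lower bound of $\varepsilon^{O(1)}$, uniform in $m$. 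A symmetric construction, with $\cC_\leftarrow$ in place of $\cC_\rightarrow$, handles $R((\ell,0),(m-4\ell,m))$.

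The main obstacle I anticipate is the geometric bookkeeping needed to verify that the union of the sub-parallelograms lies inside the target, whose horizontal width is only $\ell=3w_m$. Two delicate points arise. First, the $y$-displacement of each horizontal crossing is determined by the random environment rather than chosen freely, so the sub-parallelograms must be arranged so that the start- and end-regions of consecutive crossings overlap in a robust way, independent of where precisely each sub-crossing lands. Second, the slope-$(-1)$ end-segments of the DCTT crossings have $y$-extent of order $w_m$, whereas the top and bottom of the target lie along the single horizontal lines $y=0$ and $y=m$; this mismatch can be handled by extending the target vertically by $O(w_m)$ and using the always-open length-$1$ horizontal edges to connect the slope-$(-1)$ segments to the desired horizontal segments, absorbing the multiplicative cost into the FKG constant.
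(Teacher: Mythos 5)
Your overall strategy (concatenate a bounded number of the DCTT crossings via Harris/FKG, using the sideways crossings of $R(3u,v)$ to generate the $\pm 4\ell$ tilt on top of slope-$1$ pieces, then relabel $\varepsilon$) is exactly the spirit of the paper's proof, but your concrete construction has a genuine geometric gap that the choice $\ell=3w_m$ and the ``one tall box plus two sideways boxes'' arrangement cannot survive. First, containment fails already for a single sideways box: at any fixed height, $R(3u,v)$ occupies a horizontal interval of length $6w_m$, while the target corridor $R((\ell,0),(m\pm4\ell,m))$ has horizontal width only $\ell=3w_m$; since the DCTT crossing path is only guaranteed to stay inside its own box, it need not stay inside the target, and no placement can fix this because the box is strictly wider than the corridor. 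Second, and more fundamentally, the tilt must be spread out in height: at height $y$ the corridor for $R((\ell,0),(m+4\ell,m))$ allows anti-diagonal offsets $x-y\in[4\ell y/m,\,4\ell y/m+\ell)$, so its centreline drifts by $4\ell$ from bottom to top while its width is only $\ell$. A slope-$1$ stretch (offset constant up to $O(w)$) can therefore span height at most roughly $m/4$ before exiting through a tilted side, so your single upward crossing ``of height close to $m$'', followed by two sideways crossings that dump all $12w_m$ of drift in a short vertical window, leaves the corridor at intermediate heights. A further, smaller issue: if you take the upward piece and the sideways pieces at different DCTT scales ($v$ versus $v'$), the widths $w$ at the two scales are unknown and incomparable, so the bookkeeping ``$4\ell=2\cdot 6w$'' is not available.

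The paper's proof repairs exactly these points: it works at a single smaller scale $M=\lceil m/20\rceil$ and alternates ten upward crossings of $R(u,3v)$ with ten sideways crossings of $R(3u,v)$ along the staircase $R_i=i(2v-2u)+R(u,3v)$, $S_i=i(2v-2u)+R(3u,v)$, so that each period contributes an anti-diagonal drift of $4w_M$ and the total drift $40w_M$ is distributed essentially linearly in height, matching the corridor; it then takes $\ell=10w_M$, which dominates the composite horizontal width $L\le 8w_M$ of the whole staircase, so the glued path is contained in the target, and Harris gives the uniform lower bound $\varepsilon^{20}$. If you rework your argument with a single scale $M\asymp m/\mathrm{const}$, an alternating bounded staircase rather than one long slope-$1$ block, and $\ell$ chosen as a sufficiently large constant multiple of $w_M$ (large enough to contain the staircase's width, with the number of periods tuned so the accumulated drift is $4\ell$), it becomes the paper's argument.
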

\begin{proof}
Let $M=\lceil  \tfrac{m}{20}\rceil$. Recalling Theorem \ref{th:DCTT}, set $u=(w_M,-w_M)$, and $v=(M,M)$. For $i\in\bbZ$ let $R_i=i(2v-2u)+R(u,3v)$ and $S_i=i(2v-2u)+R(3u,v)$. Consider the event 
\[\cA=\bigcap_{i=0}^{9}\left(\cC_\uparrow(R_i)\cap\cC_\leftarrow(S_i)\right),\]
and note, as illustrated in Figure \ref{fig:RSW}, 
that
\[\mathcal A\subset \cC_\uparrow(R((L,0),20(v-u))),\]
with $L = 3M+w_M -\theta (3M-w_M)$, and $\theta = \tfrac{2M-2w_M}{2M+2w_M}$. 

By the Harris inequality \cite{Harris60} and Theorem \ref{th:DCTT}, we have $\bbP_{\pco,0}(\cA)\ge \varepsilon$, for some fixed $\varepsilon>0$, and any $m$ large enough. 
Moreover, 
$$L \le 3M+w_M - (3M-w_M)\left(1-2\frac{w_M}{M}\right) 
\le 8w_M, $$
and 
$$20(v-u) = 20 (M+w_M,M+w_M) - 40 (w_M,0).$$ 
In particular, letting $\ell = 10w_M$, one has for $m$ large enough, that 
$$
\bbP_{\pco,0}(\cC_\uparrow( 
R((\ell,0),(m-4\ell,m))))\ge \bbP_{\pco,0}(\cC_\uparrow(R((L,0),20(v-u))))\ge\bbP_{\pco,0}(\cA) \ge \varepsilon.
$$ 
Similarly,
$$\mathbb P_{\pc^0,0}\big( \mathcal C_\uparrow(R((\ell,0),(m+4\ell,m)))\big) \ge \varepsilon,$$
which completes the proof.  
\end{proof}

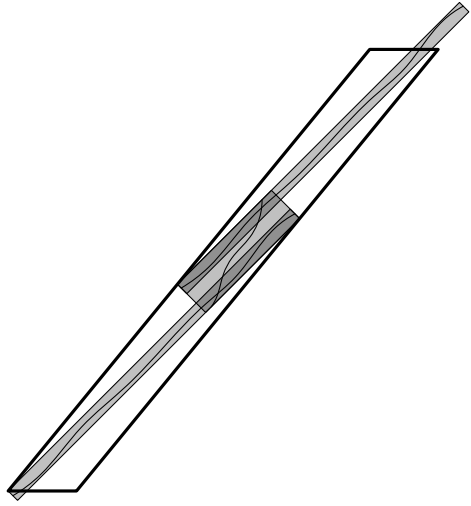
\begin{figure}
\centering
\begin{tikzpicture}[line cap=round,line join=round,>=triangle 45,x=0.125cm,y=0.125cm]
\draw[fill=black,fill opacity=0.25] (0,0) -- (30,30) -- (29,31) -- (-1,1) -- cycle;
\draw[fill=black,fill opacity=0.25] (20,20) -- (30,30) -- (27,33) -- (17,23) -- cycle;
\draw[fill=black, fill opacity=0.25] (18,22) -- (48,52) -- (47,53) -- (17,23) -- cycle;
\draw [very thick] (6.27,1)-- (44.73,48)-- (37.45,48)-- (-1,1)--cycle;
\draw (-0.5,0.5) to [curve through={(2.4,2.7)..(6.8,7.8)..(11.6,12.2)..(20,20.9)..(23.2,23.8)..(27.4,29)}] (29.5,30.5);
\draw (20.6,20.6) to [curve through={(21.2,21.7).. (21.9,24.1)..(22.8,25.8)..(24.7,28)..(25.5,29.4)}] (26,32);
\begin{scope}[shift={(17.9,22.1)}]
\draw (-0.5,0.5) to [curve through={(2.4,2.7)..(6.8,7.8)..(11.6,12.2)..(20,20.9)..(23.2,23.8)..(27.4,29)}] (29.5,30.5);
\end{scope}
\end{tikzpicture}
\caption{If the three shaded rectangles of dimensions either 
$3w_M\times M$ or $w_M\times 3M$ are crossed in the appropriate directions, 
then the thickened parallelogram is also crossed.}
\label{fig:RSW}
\end{figure}

\subsection{Oriented percolation in a random environment}

Finally, we are ready to proceed to the final step in the proof of Theorem \ref{theo:pq} (recall Section \ref{sec:outline}).

\begin{proposition}[Renormalisation]\label{prop.final}
Let $\varepsilon>0$ and $0<\beta<1<\alpha$ be given. Let $\rho=\rho(\alpha,\beta)>0$, be such that 
$\beta+ 2\rho <1$, $\alpha -2\rho >1$, and $\alpha - \beta \ge 12\rho$. 
Then there exist $\varepsilon'>0$, such that for any $m\ge 1$, any  $\ell \in [1,  \tfrac{\rho m}{2}]$, and any $p,q\in [0,1)$ the following holds. If 
\begin{align}\label{hyp1}
 \bbP_{p,q}\left(\cC_\uparrow\left(R((m\rho,0),(m\alpha,m)
 )\right)\cap \cC_\uparrow\left(R((m\rho,0),(m\beta,m)
 )\right)\right)&{}>
 1-\varepsilon',\\
\intertext{and}
\label{hyp2}
\bbP_{p,0}\left(\cC_\uparrow\left(R((\ell,0),(m+4 \ell,m))\right)\cap  \cC_\uparrow\left(R((\ell,0),(m-4 \ell,m))\right)\right)&{}> 
\varepsilon,
\end{align}
then $\pc(q) < p$. 
\end{proposition}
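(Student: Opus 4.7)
The plan is to renormalise the $(p,q)$-enhanced model so that it stochastically dominates an oriented percolation with geometric defects, in the sense of Hil\'ario, S\'a, Sanchis and Teixeira \cite{Hilario23}, and then deduce the strict inequality from a continuity argument in $p$. Regard the states of the length-two edges as a random environment $\eta=(\eta_n)_{n\in\bbZ}\in\{0,1\}^\bbZ$, with $\eta_n=1$ iff every edge $((x,2n),(x,2n+2))$, $x\in\bbZ$, is open; the $\eta_n$ are i.i.d.\ Bernoulli$(q)$, and conditional on $\eta$ the sites form an i.i.d.\ Bernoulli$(p)$ field.

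Partition time into consecutive \emph{$m$-blocks} of $m$ rows, and declare an $m$-block \emph{good} if, conditional on its restriction of $\eta$, the joint parallelogram crossing event in \eqref{hyp1} (translated to this block) has site-probability at least $1-\sqrt{\varepsilon'}$, and \emph{bad} otherwise. By Markov's inequality applied to \eqref{hyp1}, a given $m$-block is good with probability at least $1-\sqrt{\varepsilon'}$, and the goodness events are i.i.d.\ across blocks since the $\eta_n$ in distinct blocks are. Enumerate the good $m$-blocks along $\bbZ$ as $K_1<K_2<\cdots$ (and symmetrically in the negative direction): the gaps $\xi_i:=K_{i+1}-K_i-1$ are i.i.d.\ geometric with mean $\sqrt{\varepsilon'}/(1-\sqrt{\varepsilon'})$, matching the defect variables of \cite{Hilario23}.

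We now construct the renormalised edges. At each good $m$-block $K_i$, the quenched box crossings of slopes $\alpha$ and $\beta$ and widths $\rho m$ provided by \eqref{hyp1} yield, respectively, right and left renormalised edges $(x,i)\to(x+1,i+1)$ and $(x,i)\to(x-1,i+1)$, each with success probability at least $1-\sqrt{\varepsilon'}$. Each run of $\xi_i$ bad $m$-blocks between consecutive good ones is traversed using \eqref{hyp2} iteratively: one stacks $\xi_i$ rectangles of width $\ell\le\rho m/2$ and tilt $\pm 4\ell$, obtaining a crossing of probability at least $\varepsilon^{\xi_i}$ and total horizontal span at most $4\ell\xi_i\le 2\rho m\xi_i$. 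The resulting renormalised bond percolation opens the bond between levels $i$ and $i+1$ with probability $\ge \varepsilon^{\xi_i}(1-\sqrt{\varepsilon'})$, exactly in the HSST form $p_*^{1+\xi_i}$ for a suitable $p_*$. Choosing $\varepsilon'$ small enough (with $m$, $\ell$ adjusted accordingly) places both parameters—underlying edge probability close to $1$ and defect mean close to $0$—inside the supercritical regime of \cite{Hilario23}.

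The main obstacle is to preserve enough independence for \cite{Hilario23} to apply. Conditionally on $\eta$, distinct renormalised edges at a given good block live in horizontal slabs of width $\rho m$ that are pairwise disjoint because the slopes satisfy $\alpha-\beta\ge 12\rho$; the bad-gap rectangles of width $\ell=o(m)$ can likewise be routed disjointly for distinct renormalised vertices thanks to $\ell\le\rho m/2$. This yields conditional independence of the crossings across different renormalised vertices, and the $1$-dependence between horizontally adjacent bonds sharing the same good block is reduced to a Bernoulli product by a standard Liggett--Schonmann--Stacey \cite{Liggett97} argument. Once percolation of the renormalised model at $(p,q)$ is established, all crossing probabilities in \eqref{hyp1}--\eqref{hyp2} are continuous functions of $p$, so decreasing $p$ by a sufficiently small amount preserves both hypotheses with slightly weaker constants; the HSST supercritical regime being open, percolation survives this perturbation, and hence $\pc(q)<p$.
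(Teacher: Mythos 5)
Your high-level architecture (environment of length-two edges, good/bad time blocks via Markov's inequality, a Liggett--Schonmann--Stacey reduction, comparison with the geometric-defect model of \cite{Hilario23}, and a final continuity-in-$p$ step) matches the paper's strategy, but there is a genuine gap at the heart of the argument: you never show that the crossings you collect actually concatenate into a single infinite open path in the original lattice. With your disjoint $m$-blocks and a \emph{single} sloped crossing per renormalised edge, the crossing of a block ends at a random point in a width-$\rho m$ window on the block's top line, and there is no reason it meets the crossing used in the next block (or the width-$\ell$ rectangle stack spanning a bad gap): a crossing event only guarantees existence of \emph{some} path, not one through a prescribed point. The paper's construction is designed precisely to solve this: renormalised rows are spaced $m/2$ apart so consecutive parallelograms overlap vertically, each renormalised edge requires \emph{two} crossings, of a slope-$\alpha$ and a horizontally shifted slope-$\beta$ parallelogram that geometrically cross each other within the overlap (this is where $\alpha-\beta\ge 12\rho$, $3\rho m+\beta m/2\le \alpha m/2$ enter), and each bad stretch requires \emph{two} offset rectangle stacks, as in \eqref{open.v.1}--\eqref{open.v.2} with the $\pm 3\ell$, $\pm 4\ell$ shifts and $\ell\le\rho m/2$, so that the planarity/non-crossing property of simple paths forces the relevant paths to share vertices. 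This deterministic gluing (the paper's \cref{lem:renormalisation:deterministic}) is what dictates the whole geometry; your proposal omits it, and also, using one stack instead of two, your probability $\varepsilon^{\xi_i}$ would anyway not support the required bidirectional connections.

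A second, smaller but real, problem is the reduction to \cref{th:HSST}. The bond probability $\varepsilon^{\xi_i}(1-\sqrt{\varepsilon'})$ is \emph{not} ``exactly in the HSST form $p_*^{1+\xi_i}$'': the base $\varepsilon$ is fixed by hypothesis \eqref{hyp2} and cannot be driven close to $1$ by shrinking $\varepsilon'$, so your claim that choosing $\varepsilon'$ small makes ``the underlying edge probability close to $1$'' is false as stated. One needs the rescaling device used in the paper: fix $M$ with $\varepsilon^{c/M}\ge 1-\delta'$, write the defect cost as $(1-\delta')^{M\xi_i}$, and observe that $M\xi_i$ is stochastically dominated by a geometric variable whose mean is small once $\varepsilon'$ is small, which restores the $p^{1+\xi}$ form with $p$ near $1$. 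Your independence bookkeeping (quenched goodness, disjoint slabs, LSS for the residual $1$-dependence) and the closing continuity argument are in the spirit of the paper, but without the gluing lemma and the correct HSST reduction the proof does not go through.
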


Before proving Proposition \ref{prop.final}, let us conclude the proof of the main result of this section, Theorem \ref{theo:pq}.
\begin{proof}[Proof of Theorem \ref{theo:pq}]
Fix $p=\pco$ and $q>0$. By Lemmas \ref{lem:alpha:strict} and \ref{lem_alpha_beta}, we have $0<\beta<1<\alpha<\infty$, setting $\alpha=\alpha(p,q)$ and $\beta=\beta(p,q)$. Fix $\varepsilon'$ provided by Proposition \ref{prop.final} for $\varepsilon$ given by $(\tilde \varepsilon)^2$, where $\tilde \varepsilon$ is the value of $\varepsilon$ provided by Corollary \ref{cor:renorm:bad}. It then suffices to find $m\ge 1$ and $\ell\in[1,\tfrac{\rho m}{2}]$ so that \eqref{hyp1} and \eqref{hyp2} hold. By Lemma \ref{lem:crossing} and a union bound, \eqref{hyp1} is satisfied for any $m$ large enough. Finally, by Corollary \ref{cor:renorm:bad} and the Harris inequality \cite{Harris60}, for any $m$ large enough we can choose $\ell\in[1,\tfrac{\rho m}{2}]$ so that \eqref{hyp2} holds.
\end{proof}
The proof of Proposition \ref{prop.final} relies on the 
recent result \cite[Theorem 8.2]{Hilario23}.

\begin{theorem}[Oriented percolation with geometric defects, \cite{Hilario23}]
\label{th:HSST}
Let $p,\delta\in(0,1)$ and $\xi=(\xi_i)_{i\in\bbN}$ be a sequence of independent random variables with $\bbP(\xi=k)=(1-\delta)\delta^k$ for $k\in\bbN$. Endow $\bbN^2$ with the oriented edge set $E=\{((i,n),(i,n+1)),((i,n),(i+1,n+1)):(i,n)\in\bbN^2\}$. Conditionally on the \emph{environment} $\xi$, we declare each edge from $(i,n)$ to be open independently with probability $p^{\xi_n+1}$ for all $(i,n)\in\bbN^2$. Denoting the law of this process by $\bbP_p^\xi$, the following holds. There exists $\varepsilon>0$ such that if $\delta\le \varepsilon$ and $p\ge 1-\varepsilon$, then for almost every environment $\xi$, under $\bbP_p^\xi$, there is an infinite open path starting at the origin with positive probability.
\end{theorem}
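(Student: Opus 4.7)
My plan is to implement the renormalisation sketched in Section~2, Step~5, casting the enhanced oriented percolation model at parameters $(p,q)$ into the framework of oriented percolation with geometric defects from \cref{th:HSST}, and then obtaining the strict inequality $\pc(q)<p$ by continuity in $p$. I start by enumerating the \emph{good times} $\tau_1<\tau_2<\cdots$, those integers $2n$ at which all length-two vertical edges $((x,2n),(x,2n+2))_{x\in\bbZ}$ are open. Since the rows are independently good with probability $q$, the gaps $\xi_i=(\tau_{i+1}-\tau_i)/2-1$ form an i.i.d.\ geometric sequence with $\bbP(\xi_i=k)=q(1-q)^k$, which plays exactly the role of the environment in \cref{th:HSST} with parameter $\delta=1-q$. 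The renormalised lattice is the oriented lattice $\bbN^2$ of \cref{th:HSST}, its first coordinate indexing the good times $(\tau_i)$ and its second coordinate indexing horizontal positions in the original lattice spaced by $(\alpha-\beta)m$.

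For the renormalised bonds out of $(n,i)$, I build two open paths in the original lattice. A bond from $(n,i)$ to $(n+1,i+1)$ is obtained by concatenating a right-box crossing of $R(m\cdot u_{\alpha,\beta},m\cdot v_\alpha)$, succeeding with probability at least $1-\varepsilon'$ by \eqref{hyp1}, with a vertical stack of $\xi_n$ bad-strip parallelograms $R((\ell,0),(m+4\ell,m))$ from \eqref{hyp2}; a bond from $(n,i)$ to $(n+1,i)$ is constructed identically but with a left-box crossing of $R(m\cdot u_{\alpha,\beta},m\cdot v_\beta)$ and bad-strip parallelograms $R((\ell,0),(m-4\ell,m))$. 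The $\xi_n$ bad-strip parallelograms use only length-one edges on disjoint vertical strips of height $m$, so they are genuinely independent conditional on $(\xi_i)$, and their joint success probability is at least $\varepsilon^{\xi_n}$. The constraints $\alpha-2\rho>1$, $\beta+2\rho<1$ and $\alpha-\beta\ge 12\rho$ are precisely what is needed for the concatenated path to land in a target tube of width $\rho m/2$ around either $(n+1,i)$ or $(n+1,i+1)$, whose horizontal spacing $(\alpha-\beta)m$ exactly matches the difference between choosing a right box and a left box; the bad-strip drift (of order $m$ per strip, with fluctuations $O(\ell)=o(m)$) is the same for both choices and so does not break this alignment.

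Conditionally on $(\xi_i)$, each renormalised bond is open with probability at least $(1-\varepsilon')\varepsilon^{\xi_n}$. Because distinct renormalised bonds are routed through disjoint boxes at the good-time scale and through disjoint vertical strips in the bad gaps, the resulting field of renormalised bonds is $1$-dependent conditional on the environment. By the Liggett--Schonmann--Stacey theorem \cite{Liggett97}, this field dominates an i.i.d.\ Bernoulli field on $\bbN^2$ with edge probabilities of the form $P^{1+\xi_n}$ for some $P\in(0,1)$ that can be brought arbitrarily close to $1$ by shrinking $\varepsilon'$, and, if needed, by a second round of coarsening which groups several good times into one renormalised level (this preserves the geometric form of the environment but boosts the per-level success probability towards $1$). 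Once $P$ and $q$ place us in the regime of \cref{th:HSST}, that theorem yields an infinite open path in the renormalised lattice, and hence infinite percolation in $\bbP_{p,q}$. Finally, the left-hand sides of \eqref{hyp1} and \eqref{hyp2} are continuous in $p$, so both hypotheses persist on some interval $[p-\eta,p]$, and rerunning the same argument at $p-\eta$ gives $\pc(q)\le p-\eta<p$.

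The main obstacle will be the geometric and independence bookkeeping in the construction of the renormalised bonds: the concatenated path must land in a prescribed target tube despite the random drift of the bad gaps, which is precisely the motivation for the three inequalities on $\rho$, and bonds from different renormalised sites must use disjoint sub-regions of the original lattice so that the renormalised field is genuinely $1$-dependent rather than long-range. A secondary difficulty is the parameter matching with \cref{th:HSST}: meeting both $\delta\le\varepsilon_\star$ and $P\ge 1-\varepsilon_\star$ simultaneously for arbitrary $q\in(0,1)$ may require the second coarsening step mentioned above, so as to decouple the "good-edge probability" from the fixed constant $\varepsilon$ coming from \eqref{hyp2}.
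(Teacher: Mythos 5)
Your proposal does not prove the statement it is supposed to prove. The statement is \cref{th:HSST} itself: percolation of the oriented bond model on $\bbN^2$ in which, given an i.i.d.\ geometric environment $(\xi_n)$, every edge out of column $n$ is open with probability $p^{\xi_n+1}$. Your argument instead takes \cref{th:HSST} as a black box (``once $P$ and $q$ place us in the regime of \cref{th:HSST}, that theorem yields an infinite open path'') and uses it to deduce $\pc(q)<p$ for the enhanced Catalan-type model; that is a sketch of \cref{prop.final} and \cref{theo:pq}, i.e.\ of the application, not of \cref{th:HSST}. As a proof of \cref{th:HSST} it is circular. In the paper this theorem carries no proof at all: it is quoted verbatim from \cite{Hilario23}*{Theorem 8.2}, and its content is genuinely hard. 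The difficulty your proposal never touches is that columns $n$ with $\xi_n$ large are ``defective'': all their edges are open only with probability $p^{\xi_n+1}$, which is not bounded below, and such columns occur infinitely often along any infinite path. Showing that percolation nevertheless survives requires the multi-scale renormalisation of \cite{Hilario23} (in the spirit of \cite{Hoffman05} and \cite{Kesten22}), balancing the exponential cost $p^{O(\xi_n)}$ of traversing a defect against the exponentially small probability that $\xi_n$ is large, uniformly over scales. None of the ingredients you list (\eqref{hyp1}, \eqref{hyp2}, edge speeds, RSW, Liggett--Schonmann--Stacey) substitutes for that argument, since they all concern the enhanced Catalan model rather than the defect model of the statement.

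A secondary remark, in case your text is meant as a contribution to the proof of \cref{prop.final}: your definition of ``good'' times (rows in which all length-two edges are open) is not the right notion and would not make the renormalisation work as stated. A single open row of length-two edges does not make the height-$m$ box crossings of \eqref{hyp1} likely; the paper instead conditions on the whole environment $\cE$ of length-two edges and declares an integer good when the \emph{conditional} crossing probability in \eqref{good.integer} exceeds $1-\sqrt{\varepsilon'}$, with a Markov-type inequality converting the annealed bound \eqref{hyp1} into a bound on the probability of being good, and the geometric environment of \cref{th:HSST} then arises from the gaps between good indices after the \cite{Liggett97} domination, not directly from the parameter $q$. So even read as a proof of the renormalisation step, your outline skips the point where the long-range dependence of the enhancement is actually handled.
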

\begin{proof}[Proof of Proposition \ref{prop.final}] 
Let $0<\beta<1<\alpha$, $\rho\in(0,\min((\alpha-\beta)/12,(\alpha-1)/2,(1-\beta)/2))$, $1/2\ge\varepsilon,\varepsilon'>0$, $m\ge 1$, $\ell \in [1,\tfrac{\rho m}{2}]$ and $p,q$ be given, so that~\eqref{hyp1} and~\eqref{hyp2} are satisfied. The value of $\varepsilon'$ is assumed small enough so that some conditions imposed later in the proof hold. Let 
\begin{align}
\label{eq:def:RaRb}
R^\alpha&{}=R((m\rho,0),(m\alpha,m)),&R^\beta&{}=R((m\rho,0),(m\beta,m)).
\end{align}
First note that it suffices to show that the probability that the origin is connected to infinity by an open path is positive under $\mathbb P_{p,q}$, since then by continuity of the probabilities in~\eqref{hyp1} and~\eqref{hyp2} as functions of $p$, this would remain true for a smaller value of $p$. 

The strategy is to compare our model with the model of oriented bond percolation in random environment 
considered in Theorem \ref{th:HSST}. 
Here the role of the random environment is played by the state of all length-two edges, whose associated sigma-field 
is denoted by $\mathcal E$. Declare an integer $n\ge 0$ \emph{good} if 
\begin{equation}\label{good.integer}
\bbP_{p,q}\Big(\cC_\uparrow((0, \frac{nm}2)+ R^\alpha)\cap\cC_\uparrow((0,\frac{nm}2) + R^\beta)\ \Big| \ \mathcal E \Big)\ge 1-\sqrt{\varepsilon'},
\end{equation}
and call it \emph{bad} otherwise. Denoting by $\mathbb Q_q$ the law of all length-two edges and using \eqref{hyp1}, one has 
\begin{align*}
\varepsilon'  \ge 1-\mathbb P_{p,q}\Big(\cC_\uparrow((0, \frac{nm}2)+ R^\alpha)\cap\cC_\uparrow((0,\frac{nm}2) + R^\beta)\Big) \ge \mathbb Q_q(n \text{ is bad})\times \sqrt{\varepsilon'},  
\end{align*} 
from which we infer that for any $n\in\bbN$,  
\[\mathbb Q_q(n \text{ is good})\ge 1 -  \sqrt{\varepsilon'}.\]
It follows that the random variables $(\1\{n\text{ is good}\})_{n\ge 0}$, form a sequence of $1$-dependent identically distributed Bernoulli random variables, with mean larger than $1- \sqrt{\varepsilon'}$. Thus, by the Liggett--Schonmann--Stacey theorem~\cite[Theorem~0.0]{Liggett97}, one can ensure the existence of independent Bernoulli random variables $(X_n)_{n\ge 0}$, with mean $1-\delta$, such that for all $n\in\bbN$,
\[\1\{n\text{ is good}\} \ge X_n,\]
where $\delta>0$ can be taken arbitrarily close to $0$, by choosing $\varepsilon'$ small enough. We also set $X_{-1} = 1$. We further assume $(X_n)_{n\ge 0}$ to be constructed on the probability space of $\bbP_{p,q}$ in such a way that they are independent of the sigma-algebra generated by the set of open sites of $\bbZ^2$.

Next, we identify the intervals of good times, by defining the sequence $(\tau_n:n\ge -1)$ inductively, by $\tau_{-1} = -1$, and  
\begin{align*}
\tau_{n} &{}= \inf \{k\ge \tau_{n-1} + 1 : X_k = 1\},
&\xi_n  &{}= \tau_n - \tau_{n-1} - 1,
\end{align*}
for $n\in\bbN$. By construction the $(\xi_n)_{n\ge 0}$ are independent random variables with common law given by  
\begin{equation}\label{law.xin}
P(\xi_n = k) = \delta^k (1-\delta), \quad \text{for all }k\ge 0.
\end{equation}

Now we define a renormalized lattice, similarly to \cite[Section 9]{Durrett84}, at least on good rows (corresponding to integers $n$ such that $X_n = 1$), and using also a notion of {\it stretched bonds}, to accommodate the crossing of consecutive bad rows.

We define inductively the new vertices $(z_{i,n})_{i\ge 0, n\ge 0 }$ (in $[0,\infty)^2$) of our renormalized lattice as follows (see Figure \ref{crossing2}).
First 
$$z_{i,0} = im\cdot \big(\frac{\alpha-\beta}{2} - 2\rho,0\big),\quad \text{for }i\ge 0,$$
and note that by definition of $\rho$, one has $\tfrac{\alpha-\beta}{2} - 2\rho\ge 4 \rho$. 
Next, given $n\ge 0$, we start by defining for $i\ge 0$, 
$$\widetilde z_{i,n} = z_{i,n} + \frac {m\xi_n} 2 \cdot (1,1),$$  
and then let
\begin{equation}\label{zn+1i}
z_{i,n+1} =  \widetilde z_{i,n} + \frac m2 \cdot (2\rho +  \beta,1 ).  
\end{equation}
Now we consider a new lattice $\mathbb N^2$, with edge set $E$ from Theorem \ref{th:HSST}. 
For any $n\ge 0$ and $i\ge 0$, we declare the vertex $(i,n)$ \textit{open} if either $\xi_n = 0$ (in which case $z_{i,n} = \widetilde z_{i,n}$), or, when $\xi_n\ge 1$, if the following two events hold without using any length-two edge (see Figure \ref{crossing2}):  
\begin{equation}\label{open.v.1}
\bigcap_{j=0}^{\xi_n-1}  \cC_\uparrow\big(z_{i,n,j}  + R((\ell,0),(m+4(-1)^j \ell,m)) \big),
\end{equation} 
and 
\begin{equation}\label{open.v.2}
\bigcap_{j=0}^{\xi_n-1}  \cC_\uparrow\big(z'_{i,n,j}  + R((\ell,0),(m+4(-1)^{j+1} \ell,m)) \big),
\end{equation} 
where for all $j\ge 0$, 
\begin{align*}
z_{i,n,j} &{}= z_{i,n} + (\rho m - \ell,0) + j\left(\frac m2, \frac m2\right) +3\ell \cdot  \1\{j \text{ is odd}\} \cdot (1,0),\\
\intertext{and}
z'_{i,n,j} &{}= z_{i,n} + (2\rho m,0) + j\left(\frac m2, \frac m2\right) -3\ell \cdot  \1\{j \text{ is odd}\} \cdot (1,0).
\end{align*}

\begin{figure}
\centering
\includegraphics[width = 0.8\textwidth]{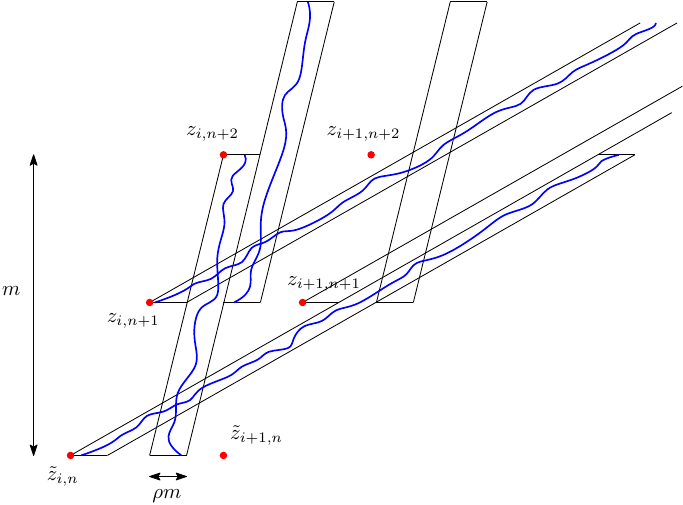}
\caption{Illustration of how the crossings of different parallelograms 
may be glued together, when $\xi_{n+1}= 0$. In this example the two 
edges emanating from both $(i,n)$ and $(i,n+1)$ are open 
since the corresponding parallelograms are crossed vertically (by blue paths).}
\label{crossing}
\end{figure}

\begin{figure}
\centering
\includegraphics[width = 0.9\textwidth]{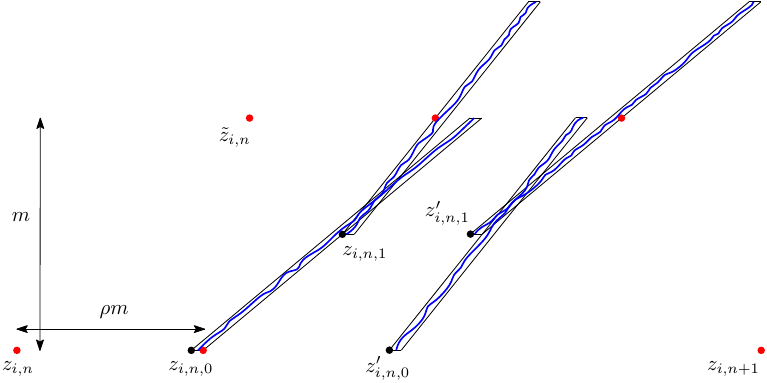}
\caption{Illustration of the definitions in~\eqref{open.v.1} and~\eqref{open.v.2}. 
In this example $\xi_n = 2$, and $(i,n)$ is open, since the 
corresponding parallelograms are crossed vertically (by blue paths). 
It is also apparent that the last parallelograms pass between 
$\widetilde z_{i,n} + (\rho m,0)$ and $\widetilde z_{i,n} +2(\rho m,0)$, which are marked by red dots.}
\label{crossing2}
\end{figure}

\noindent Furthermore, we say that the edge $((i,n),(i+1,n+1))$ is \textit{open}, if the following event holds   
\begin{equation}
\label{eq:def:open:right}\cC_\uparrow(\widetilde z_{i,n}+ R^\alpha)\ \cap \ \cC_\uparrow(\widetilde z_{i,n} +(2\rho m,0) +  R((m\rho,0),(m\beta/2,m/2)
)),\end{equation}
and similarly we say that the edge $((i,n),(i,n+1))$ is open if the following event holds 
\begin{equation}
\label{eq:def:open:left}\cC_\uparrow(\widetilde z_{i,n}+ R((m\rho,0),(m\alpha/2,m/2)
)\ \cap \ \cC_\uparrow(\widetilde z_{i,n} +(2\rho m,0) +  R^\beta).
\end{equation}
An \textit{open path} in the new lattice is a sequence $(i_1,n_1),\dots,(i_k,n_k)$ (possibly with $k=\infty$), such that for each $1\le j<k$, 
$(i_j,n_j)$ is open and $((i_j,n_j),(i_{j+1},n_{j+1}))$ is an open edge.

The proof of Proposition \ref{prop.final} is complete if we prove the following two lemmas.\end{proof}

\begin{lemma}
\label{lem:renormalisation:proba}
    For almost every realization of the environment and the $(X_n)_{n\ge 0}$ variables, whose sigma-algebra is denoted $\cF^X$, the following holds. Under $\bbP_{p,q}(\cdot|\cE,\cF^X)$, the origin is in an infinite open path in the renormalized lattice with positive probability.
\end{lemma}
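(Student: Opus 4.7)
The plan is, having conditioned on $\cE$ and $\cF^X$, to produce a stochastic lower bound for the renormalised open-vertex and open-edge field on $\bbN^2$ of the form handled by \cref{th:HSST}, and to conclude via that theorem. Throughout I write $\bbP^\star:=\bbP_{p,q}(\cdot\mid\cE,\cF^X)$; under this conditioning the length-two edges are frozen, the quantities $\tau_n,\xi_n,z_{n,i},\widetilde z_{n,i}$ are all deterministic, and the original site states remain i.i.d.\ Bernoulli$(p)$.

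\textbf{Conditional marginals.} For a vertex $(n,i)$ with $\xi_n\ge 1$, the events \eqref{open.v.1}--\eqref{open.v.2} are intersections of $2\xi_n$ simple-path crossings of parallelograms of the form $R((\ell,0),(m\pm 4\ell,m))$, forbidden from using length-two edges. The offsets in $z_{n,i,j}$ and $z'_{n,i,j}$, combined with $\ell\le\rho m/2$, make all these slanted strips pairwise disjoint at every fixed height; as each crossing is length-one only, it has the same $\bbP^\star$-probability as under $\bbP_{p,0}$, hence at least $\varepsilon$ by \eqref{hyp2}. Independence of the corresponding site-supports then gives
\[
\bbP^\star(\text{vertex }(n,i)\text{ open})\ge\varepsilon^{2\xi_n},
\]
equal to $1$ if $\xi_n=0$. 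For an edge out of $(n,i)$, the coupling forces $X_{\tau_n}=1$, so $\tau_n$ is good; the definition \eqref{good.integer} combined with horizontal stationarity of the conditional law yields
\[
\bbP^\star\bigl(\cC_\uparrow(\widetilde z_{n,i}+R^\alpha)\cap\cC_\uparrow(\widetilde z_{n,i}+(2\rho m,0)+R^\beta)\bigr)\ge 1-\sqrt{\varepsilon'}.
\]
Since each full crossing contains the corresponding bottom-half crossing appearing in \eqref{eq:def:open:right}--\eqref{eq:def:open:left}, each edge out of $(n,i)$ is open under $\bbP^\star$ with probability at least $1-\sqrt{\varepsilon'}$.

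\textbf{Dependence structure and reduction to \cref{th:HSST}.} A short geometric check based on $((\alpha-\beta)/2-2\rho)m\ge 4\rho m\gg\ell$ and $\alpha-\beta\ge 12\rho$ shows that the composite ``vertex plus outgoing edges'' Bernoulli field on the renormalised lattice is $1$-dependent under $\bbP^\star$: the only overlaps are between a row-$n$ edge event and the bottom slab of height $m/2$ of the row-$(n+1)$ vertex event, and between an $R^\alpha$ at $(n,i)$ and the top of the shifted half-$R^\beta$ at $(n,i\pm1)$. Applying the Liggett--Schonmann--Stacey theorem \cite{Liggett97} row-by-row (conditionally on $\xi$ the marginals within a row are uniform in $i$), together with the FKG inequality, yields a process on $\bbN^2$ in which, conditionally on the environment $\xi$, the edges at row $n$ are open independently with probability at least $f(\xi_n):=\varepsilon^{2\xi_n}(1-c\sqrt{\varepsilon'})$ for some universal constant $c$. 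By \eqref{law.xin}, the $\xi_n$ are i.i.d.\ geometric with parameter $\delta=\delta(\varepsilon')\to 0$ as $\varepsilon'\to 0$. Choosing $\varepsilon'$ sufficiently small makes $\delta$ as small and $f(0)$ as close to $1$ as needed, placing us in the regime of \cref{th:HSST} and giving an infinite open cluster from the origin under $\bbP^\star$ with positive probability.

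\textbf{Main obstacle.} The most delicate point is this last comparison: \cref{th:HSST} is stated for defect probabilities of the precise form $p^{\xi_n+1}$ with $p$ close to $1$, whereas our $f(\xi_n)\sim \varepsilon^{2\xi_n}$ is too small when $\xi_n\ge 1$ to be dominated by $p^{\xi_n+1}$ for any $p$ close to $1$. I would resolve this either by inflating the environment $\xi_n\rightsquigarrow \lceil C(\varepsilon)\xi_n\rceil$ (which preserves the geometric law with a still small parameter and recasts the defect cost in the exact HSST form for $p$ close to $1$), or by rerunning the multi-scale argument of \cite{Hilario23} directly for our slightly more general defect law, whose proof only relies on the small defect density and the exponential tail of the defect cost; both routes are qualitative and deliver the required positive percolation probability.
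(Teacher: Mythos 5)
Your overall strategy --- lower-bounding the renormalised vertex/edge field given $\cE$ and $\cF^X$, reducing to \cref{th:HSST}, and repairing the mismatch with the $p^{\xi_n+1}$ form by inflating the environment $\xi_n\rightsquigarrow M\xi_n$ --- is the paper's, and your resolution of the ``main obstacle'' is exactly the paper's choice of $M$ with $\varepsilon^{4/M}\ge 1-\delta'$. The genuine gap is in the decoupling step. You apply Liggett--Schonmann--Stacey to the composite ``vertex plus outgoing edges'' field and claim it yields, conditionally on $\xi$, independent edges of parameter $\varepsilon^{2\xi_n}(1-c\sqrt{\varepsilon'})$. This fails for two reasons. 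First, LSS domination of a $1$-dependent field by a product measure requires marginals uniformly close to $1$; on a row with $\xi_n\ge 1$ your composite marginals are of order $\varepsilon^{2\xi_n}$, and a $1$-dependent field of small density need not dominate \emph{any} non-trivial product measure (e.g.\ a $1$-dependent field in which two neighbouring sites are never simultaneously open dominates only density $0$). In particular LSS does not return a parameter comparable to $\varepsilon^{2\xi_n}$, let alone row-by-row uniformly in the realisation of $\xi$. Second, FKG does not remove the dependence between the two renormalised edges emanating from the same vertex $(n,i)$: both require the same low-probability vertex event, and positive association is the wrong direction when one wants an independent lower bound.

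The paper circumvents both problems by separating the two sources of randomness. Once $\cE$ and $\cF^X$ are fixed, the vertex indicators $Y_{n,i}$ are genuinely \emph{independent} (the crossings in \eqref{open.v.1}--\eqref{open.v.2} use only sites in deterministic, pairwise disjoint regions), so no LSS is needed for them; LSS is applied only to the edge indicators $Z_e$ (set to $1$ when an endpoint is closed), which are $1$-dependent with marginals at least $1-\sqrt{\varepsilon'}$, i.e.\ in the regime where LSS is valid, yielding independent $\overline Z_e$ of parameter $1-\delta'$. The shared-vertex dependence is then removed by an explicit splitting: independent Bernoulli variables $W_e,W_f$ of mean $\varepsilon^{4\xi_n}$ are constructed with $\max(W_e,W_f)\le Y_{n,i}$ for the two edges $e,f$ out of $(n,i)$ (possible since $1-(1-\varepsilon^{4\xi_n})^2\le\varepsilon^{2\xi_n}$ for $\varepsilon$ not too large), and $e$ is declared good when $W_e=\overline Z_e=1$, giving independent edges of parameter $\varepsilon^{4\xi_n}(1-\delta')$, to which your environment-inflation step applies verbatim. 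A minor further point: your claim that the strips in \eqref{open.v.1}--\eqref{open.v.2} are pairwise disjoint is neither needed nor clear when $\ell$ is close to $\rho m/2$; the bound $\varepsilon^{2\xi_n}$ follows from \eqref{hyp2} and the Harris inequality regardless.
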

\begin{lemma}
\label{lem:renormalisation:deterministic}
    If there exists an infinite open path in the renormalized lattice, then there is an infinite open path in the original lattice.
\end{lemma}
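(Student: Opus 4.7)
The lemma is deterministic: given any infinite open path $(n_k,i_k)_{k\ge 0}$ in the renormalised lattice, the plan is to splice together the open parallelogram crossings witnessing each vertex and edge being open, producing an infinite open path in the original lattice that visits (neighbourhoods of) the successive points $z_{n_k,i_k}$ and $\widetilde z_{n_k,i_k}$. For each renormalised vertex $(n_k,i_k)$ with $\xi_{n_k}\ge 1$, the two families of $\xi_{n_k}$ open crossings in~\eqref{open.v.1} and~\eqref{open.v.2} consist of slanted parallelograms stacked step-by-step along the bad interval, offset from each other by the horizontal shift $2\rho m$ and with alternating $\pm 3\ell$ corrections on odd indices. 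Consecutive parallelograms within a family overlap in a horizontal strip of height $m/2$; the alternating widths $m\pm 4\ell$ place the top-endpoints of one crossing and the bottom-endpoints of the next into ``crossing position'' on this strip, so that the two simple oriented open paths must cross inside it. Using the standard planar fact that two simple oriented paths which cross in the oriented lattice must share at least one vertex, we splice them at any common vertex; iterating yields an open path from $z_{n_k,i_k}$ to $\widetilde z_{n_k,i_k}$ using only length-one edges, consistent with the proviso attached to~\eqref{open.v.1}--\eqref{open.v.2}.

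The analogous splicing for a renormalised edge relies on the two crossings of parallelograms of slants $\alpha$ and $\beta$ in~\eqref{eq:def:open:right} (or~\eqref{eq:def:open:left}); the assumptions $\beta+2\rho<1<\alpha-2\rho$ and $\alpha-\beta\ge 12\rho$ force the two parallelograms to meet in a region arranged so that any pair of upward simple open crossings must intersect, again by the planar argument, producing an open path from $\widetilde z_{n_k,i_k}$ to $z_{n_{k+1},i_{k+1}}$. Concatenating the vertex-paths and the edge-paths along $(n_k,i_k)_{k\ge 0}$, and splicing at their shared endpoints (which by construction lie inside the overlap of the last parallelogram of one stage and the first of the next), yields the desired infinite open path in the original lattice.

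The main obstacle is to verify the planar crossing argument at each splicing step: in each of the two configurations one must check that the overlap of the two parallelograms involved is wide enough, in terms of $\rho,\alpha,\beta,\ell$ and $m$, that any pair of upward simple open crossings is forced to meet. In each case this reduces to the standard oriented-percolation fact that a simple path crossing a rectangle from one side to the other must share a vertex with any simple path crossing it in a transverse direction; the reduction is somewhat delicate at the junction between the topmost crossing inside a bad interval and the first crossing of the subsequent edge, but the choice of $\rho$ in~\cref{prop.final} is tailored precisely so that these junctions remain nondegenerate regardless of the realisations of the $\xi_n$.
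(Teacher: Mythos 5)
Your overall strategy coincides with the paper's: the lemma is proved by deterministically splicing the crossings that witness open renormalised vertices and edges, using that two simple oriented open paths cannot cross without sharing a vertex. The description of the splicing inside a bad interval (consecutive parallelograms of \eqref{open.v.1}, resp.\ \eqref{open.v.2}, overlapping in strips of height $m/2$ and crossing each other there because of the alternating widths $m\pm4\ell$ and the $\pm3\ell$ offsets) is in the right spirit. However, the proposal defers essentially all of the quantitative content of the lemma: the verifications that the parallelograms really do cross inside the relevant strips (inequalities such as $3\rho m+\tfrac{\beta m}{2}\le\tfrac{\alpha m}{2}$ and $\tfrac{\beta m}{2}+\rho m\le\tfrac m2+\ell$, and the exact alignment $\widetilde z_{n,i}^1+2\rho m+\tfrac{\beta m}{2}=z_{n+1,i}^1+\rho m$ used when $\xi_{n+1}=0$) are replaced by the assertion that ``the choice of $\rho$ is tailored precisely so that these junctions remain nondegenerate'', which is assuming what must be proved.

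More seriously, the one genuinely delicate splice — the junction between the topmost parallelograms of \eqref{open.v.1}--\eqref{open.v.2} and the crossings in \eqref{eq:def:open:left} or \eqref{eq:def:open:right} emanating from the open vertex $(n,i)$ — does \emph{not} reduce to the fact that two transverse crossings of a common rectangle must meet, as you claim. A vertical crossing $\gamma_3$ of the last bad-interval parallelogram and a vertical crossing of $R^2_{n,i}$ (or of the first half of $R^1_{n,i}$), taken individually, need not intersect: the boxes overlap only in a strip of height $m/2$ and both crossings run roughly upward, not transversally. The paper's argument is a sandwich: because the last parallelograms in \eqref{open.v.1} and \eqref{open.v.2} are positioned between $\widetilde z_{n,i}+(\rho m,0)$ and $\widetilde z_{n,i}+(2\rho m,0)$ (here the hypothesis $\ell\le\rho m/2$ enters), the path $\gamma_3$ starts, at the level of $\widetilde z_{n,i}$, between the starting points of the two edge-crossings $\gamma_1$ (of the shifted $R^\beta$ box) and $\gamma_2$ (of the first half of the $R^\alpha$ box); since $\gamma_1$ and $\gamma_2$ themselves intersect before the level of $z_{n+1,i}$, $\gamma_3$ is trapped between them and must hit one of the two. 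This is exactly why the edge-open events were defined with \emph{two} crossings each; your proposal neither exploits this feature nor supplies a substitute argument, so the splice at these junctions remains a genuine gap.
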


\begin{proof}[Proof of Lemma \ref{lem:renormalisation:proba}]
For $i,n\ge 0$, define the random variables
\begin{equation}
\label{eq:Yni}Y_{i,n}= \1\{(i,n) \text{ is open}\}.
\end{equation}
For an edge $e=(a,b)\in E$, we define
\begin{equation}\label{def:Ze}
Z_e =
\begin{cases}
\1\{e \text{ is open}\} & \text{if }Y_a=Y_b = 1\\
1 & \text{otherwise.} 
\end{cases}
\end{equation}
Let $\mathcal F^Y$ be the sigma-algebra generated by $(Y_{i,n})_{(i,n)\in\bbN^2}$. Set $\mathbf P=\bbP_{p,q}(\cdot|\cE,\cF^X,\cF^Y)$ and note that $\bbP_{p,q}(\cdot\mid\cE,\cF^X)$ identifies with the product Bernoulli measure with parameter $p$ on $\{0,1\}^{\bbZ^2}$. 
Note that by the Harris inequality and the definition of a good integer (recall~\eqref{good.integer}), for any edge $e=(a,b)\in E$, one almost surely has
\begin{align*}
\mathbf P( Z_e =1)  &{}= \1\{Y_aY_b=0\}+\1\{Y_aY_b=1\}\cdot \bbP_{p,q}(Z_e=1
\mid \cE,\cF^X,Y_a=Y_b=1
)\\
&\ge \1\{Y_aY_b=0\}+\1\{Y_aY_b=1\}\cdot \bbP_{p,q}(Z_e=1
\mid \cE,\cF^X)\ge 1- \sqrt{\varepsilon'}.
\end{align*}
Moreover, we claim that, almost surely under $\mathbf P$, the random variables $(Z_e)_{e\in E}$ are 1-dependent. In fact, 
under $\bbP_{p,q}(\cdot|\cE,\cF^X)$, the variables $(Y_{i,n})_{(i,n)\in\bbN^2}$ and $(Z_e)_{e\in E}$ are jointly 1-dependent. Indeed, for $V_1,V_2\subset \bbN^2$ and $E_1,E_2\subset E$ with $E_1,V_1$ not incident with $E_2,V_2$, the vectors $(Y_v,Z_e)_{v\in V_1,e\in E_1}$ and $(Y_v,Z_e)_{v\in V_2,e\in E_2}$ are independent, because they depend on open sites 
(we have conditioned on $\cE$) in deterministic disjoint regions in space. In particular, $(Y_v)_{v\in\bbN^2}$ are independent under $\bbP_{p,q}(\cdot|\cE,\cF^X)$.

By 1-dependence of $(Z_e)_{e\in E}$ under $\mathbf P$ and \cite[Theorem~0.0]{Liggett97}, the following holds, for arbitrarily small $\delta'>0$, provided $\varepsilon'>0$ is small enough. We can find independent Bernoulli random variables $(\overline Z_e)_{e\in E}$ with parameter $1-\delta'$, such that \begin{equation}
\label{eq:ZZbar}Z_e\ge \overline Z_e,
\end{equation} 
for all $e\in E$. Since their law does not depend on $\cE,\cF^X,\cF^Y$, they are independent of these sigma-fields. 

As we already established, $(Y_{i,n})_{i,n\in\bbN}$ are independent and, 
by \eqref{hyp2} and the Harris inequality, $\bbP_{p,q}(Y_{i,n}=1\mid\cE,\cF^X)\ge \varepsilon^{2\xi_n}$.
Consequently one can define a sequence of independent Bernoulli random variables $(W_e)_{e\in E}$, so 
that for each $i,n\ge 0$, if $e$ and $f$ are the two edges emanating from $(i,n)$, then $W_e$ and $W_f$ have mean $\varepsilon^{4\xi_n}$, and satisfy  
\begin{equation}
\label{eq:YW}Y_{i,n}  \ge \max (W_e,W_f).
\end{equation}
Indeed, $x\ge 1-(1-x^2)^2$ for all $x\in[0,1/2]\cup\{1\}$ and $\varepsilon^2\le 1/2$.

We now declare an edge $e\in E$ to be \textit{good} 
if $W_e= \overline Z_e= 1$, which by independence between $W_e$ and $\overline Z_e$ holds with probability
\begin{equation}\label{probability}
\varepsilon^{4\xi_n}(1-\delta'),
\end{equation} 
independently for each edge $e$. Forgetting about the states of vertices, we end up with a new model of oriented bond percolation, 
which almost fits the setting of Theorem \ref{th:HSST}.
More precisely, we would be able to apply Theorem \ref{th:HSST}, if the factor $\varepsilon^4$ in~\eqref{probability} were replaced by $1-\delta'$. However, 
one can easily  recover the exact setting of~Theorem \ref{th:HSST} as follows. Fix $M$ such that $\varepsilon^{4/M}\ge 1- \delta'$. Then simply observe, recalling~\eqref{law.xin}, that $M\xi_n$ is stochastically dominated by a geometric random variable with mean that can be chosen arbitrarily close to zero by taking smaller $\varepsilon'$ if necessary (while still fixing $\delta'$ and $M$).

Hence, by Theorem \ref{th:HSST}, with positive probability, under $\bbP(\cdot|\cE,\cF^X)$, there is an infinite oriented path of good edges. Putting \eqref{eq:Yni}, \eqref{def:Ze}, \eqref{eq:ZZbar} and \eqref{eq:YW} together, we obtain that any such path yields an open path, concluding the proof of Lemma \ref{lem:renormalisation:proba}.
\end{proof}
\begin{proof}[Proof of Lemma \ref{lem:renormalisation:deterministic}]
It is useful to note first that, for any $i,n\ge 0$, one has 
\begin{equation}\label{zni}
z_{i,n} = z_{0,n} + im\cdot \big(\tfrac{\alpha-\beta}{2} - 2\rho,0\big),
\end{equation} 
which is immediate by induction on $n$. Also, recalling \eqref{eq:def:RaRb}, for each $(i,n)\in \mathbb N^2$, let 
$$R^1_{i,n}:=\widetilde z_{i,n} + R^\alpha,\qquad   
 R^2_{i,n}:=\widetilde z_{i,n} + 2(\rho m,0) + R^\beta,$$ and notice that these two parallelograms completely cross each other before reaching the level of $z_{i,n+1}$, in the sense that, at this level, the left-most point
of $R^1_{i,n}$ is on the right of the right-most point of $R^2_{i,n}$ (since $3\rho m + \tfrac{\beta m}2 \le \tfrac{\alpha m}2$, by the definition of $\rho$). 

Next,  assume that an edge emanating from $(i,n)\in\bbN^2$ is open, say $((i,n),(i,n+1))$.

{\bf Case 1 {\rm (assume $\xi_{n+1} = 0$)}.}
We need to verify that if any of the two edges emanating from $(i,n+1)$ is open,  then any vertical crossing of $R^2_{i,n}$ may be glued to the crossings of $R_{i,n+1}^1$ and $R_{i,n+1}^2$, before they reach the level of $z_{i,n+2}$. This can be checked using the following fact, see also Figure \ref{crossing}. 

Denoting by $z^1$ the horizontal coordinate of a point $z\in \mathbb R^2$, by \eqref{zn+1i}, one has 
\[
\widetilde z_{i,n}^1 + 2\rho m + \frac{\beta m}2 = z_{i,n+1}^1 + \rho m, 
\]
so that at the level of $z_{i,n+1}$, the parallelogram $R_{i,n}^2$ passes exactly in between $R_{i,n+1}^1$ and $R_{i,n+1}^2$, allowing all crossing paths to be glued together, see Figure \ref{crossing}. 

Moreover, the same reasoning applies if an edge emanating from $(i+1,n+1)$ is open, since
$$\widetilde z_{i,n}^1 + \frac{\alpha m}{2} = z_{i,n+1}^1 + \frac{(\alpha -\beta)m}{2} -\rho m =  z_{i+1,n+1}^1 + \rho m, $$ 
where the first equality follows from~\eqref{zn+1i}, and the second from \eqref{zni}. Thus, here again the parallelogram $R_{i,n}^1$ passes exactly between the $R_{i+1,n+1}^1$ and $R_{i+1,n+1}^2$, when arriving at the level of $z_{i+1,n+1}$ (see Figure \ref{crossing}).

{\bf Case 2: {\rm (assume  $\xi_{n+1}\ge 1$)}.}
First, we note that 
it may be seen that any vertical crossing of $R_{i,n}^1$ or 
$R_{i,n}^2$ can be glued to the crossings in~\eqref{open.v.1} or~\eqref{open.v.2}
(with $n+1$ instead of $n$). In the case of $R_{i,n}^2$, one can check that the first parallelogram in~\eqref{open.v.1}, 
\[r_1=z_{i,n+1}+(\rho m-\ell,0)+R((\ell,0),(m+4\ell,m)),\]
crosses $R_{i,n}^2$ before reaching the higher level, using that $\frac{\beta m}{2} +\rho m \le \frac m 2 +\ell$. Thus, at the level of $z_{i,n+2}$, the right-most point of $R_{i,n}^2$ is on the left of the left-most point of $r_1$. Moreover, the fact that all crossings in~\eqref{open.v.1} can be glued together is immediate by construction, see Figure \ref{crossing2}. 

Likewise, the fact that the first parallelogram in~\eqref{open.v.2} (with $i$ and $n$ replaced respectively by $i+1$ and $n+1$) intersects any vertical crossing of $R_{i,n}^1$ before reaching the higher level is guaranteed by the fact that $\frac{\alpha m}{2}\ge \rho m +\frac m 2 - \ell$, and thus, for the same reasons as before, 
all crossings in~\eqref{open.v.2} can be glued together.  

Therefore, 
it only remains to see that in case when a vertex, say $(i,n)$, is open and any of the two edges emanating from it is also open, the last crossings in~\eqref{open.v.1} and~\eqref{open.v.2} may be glued to the crossings of  $R_{i,n}^1$ or $R_{i,n}^2$. To see this, assume for concreteness that the edge $((i,n),(i,n+1))$ is open (the reasoning for $((i,n),(i+1,n+1))$ being analogous). 

Consider a crossing $\gamma_1$ of $R_{i,n}^2$ and a crossing $\gamma_2$ of the first half of $R_{i,n}^1$, whose existence is guaranteed by \eqref{eq:def:open:left}. 
Since $R_{i,n}^2$ and the first half of $R_{i,n}^1$ cross before reaching the level of $z_{i,n+1}$, the  
paths $\gamma_1$ and $\gamma_2$ also  
intersect before reaching this level. 
Thus, it can be seen, regardless of the parity of $\xi_n$, and since $\ell \le \rho m /2$, 
that the last parallelograms in~\eqref{open.v.1} and~\eqref{open.v.2} always pass between $\widetilde z_{i,n} + (\rho m,0)$ and 
$\widetilde z_{i,n} + (2\rho m,0)$ (see Figure \ref{crossing2}). In particular, when arriving at the level of $\widetilde z_{i,n}$, any crossing of these parallelograms, say $\gamma_3$, passes between the starting points of $\gamma_1$ and $\gamma_2$. Since $\gamma_1$ and $\gamma_2$ intersect before reaching the level of $z_{i,n+1}$, $\gamma_3$ has to intersect either $\gamma_1$ or $\gamma_2$ before they intersect for the first time, implying that open paths can be glued together.
\qedhere
\end{proof}

\appendix

\section{Classical oriented percolation theory}
\label{app:classics}
\subsection{Edge speeds}
\label{app:edge}
\begin{proof}[Proof of Lemma \ref{lem_alpha_beta}]
	We claim that~$\bbE_{p,q}[r_n] < \infty$. To see this, let~$(r_n')_{n \ge 0}$ be defined by
	\begin{align*}&r_0':= \inf\{x \ge 0:\; (x+1,0) \text{ is closed}\}, \\
		&r_n':= \inf\{x \ge r_{n-1}':\; (x+1,n) \text{ is closed}\},\; n\ge 1.
	\end{align*}
	We clearly have~$r_n \le r_n'$ for all~$n$, and~$r_n' \sim \sum_{j=0}^n Y_j$, where~$Y_0, Y_1,\ldots$ are independent,~$\mathsf{Geometric}(1-p)$ random variables. The claim readily follows. We can now define
	\begin{equation}\label{eq_def_of_alpha}
	\alpha(p,q) := \inf_{n \ge 1} \frac{\mathbb E_{p,q}[r_{2n}]}{2n} \in [-\infty,\infty).
\end{equation}
The process $(r_{2n}-r'_0)_{n\in\bbN}$ has the properties required to apply Liggett's subadditive ergodic theorem~\cite[Theorem~VI.2.6]{Liggett05} to conclude that~$\frac{r_{2n}}{2n} \xrightarrow{n \to \infty} \alpha(p,q)$ almost surely.

The treatment of the second statement is similar, only simpler. Since~$l_n \ge 0$ {and equality is not almost sure}, we can directly define
\begin{equation*}
	\beta(p,q):= \sup_{n \ge 1} \frac{\mathbb E_{p,q}[l_{2n}]}{2n} \in [0,\infty].
\end{equation*}
The subadditive ergodic theorem then gives~$\frac{l_{2n}}{2n} \xrightarrow{n \to \infty} \beta(p,q){>0}$ almost surely.
\end{proof}

We next turn to proving Lemma \ref{lem:alpha:strict}, which requires some preparation.
\begin{lemma}
\label{lem_monotone_alpha}\leavevmode
		\begin{enumerate}
			\item If~$p$ satisfies~$\alpha(p,0) > - \infty$, then for any $q>0$,
				\begin{equation}
					\label{eq_alpha_mon}
				\alpha(p,q) - \alpha(p,0) \ge qp(1-p)^2.
				\end{equation}
			\item If~$p$ satisfies~$\beta(p,0) < \infty$, then for any $q>0$,
				\begin{equation}
					\label{eq_beta_mon}
					\beta(p,q) - \beta(p,0) \le qp(1-p)^2.
				\end{equation}
	\end{enumerate}
\end{lemma}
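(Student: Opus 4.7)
I sketch part (i); part (ii) is symmetric, using the left edge $l_{2n}$ in place of the right edge $r_{2n}$. Couple the $(p,q)$- and $(p,0)$-processes on a common probability space with shared Bernoulli$(p)$ site variables $\omega$, and independent Bernoulli$(q)$ enhancement variables $(\eta_m)_{m\ge0}$, used only by the $q$-process; monotonicity gives $\xi_n^{(0)}\subseteq\xi_n^{(q)}$ and hence $r_{2n}^{(q)}\ge r_{2n}^{(0)}$. Russo's formula for the independent Bernoulli parameter $q$ reads
\[
\frac{d}{dq}\bbE_{p,q}[r_{2n}]\;=\;\sum_{m=0}^{n-1}\bbE\bigl[r_{2n}(\omega,\eta^{m,1})-r_{2n}(\omega,\eta^{m,0})\bigr],
\]
where $\eta^{m,b}$ fixes $\eta_m=b$; each summand is nonnegative. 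Integrating in $q$ and dividing by $2n$, the bound \eqref{eq_alpha_mon} reduces to showing that every summand is at least $2p(1-p)^2$, uniformly in $n$, $m$ and $q$.

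To lower-bound a single summand, set $r_m:=r_{2m}^{(0)}(\omega)$ and consider the local event $F_m$ that the sites $(r_m,2m+1)$ and $(r_m+1,2m+1)$ are closed while $(r_m,2m+2)$ is open. Conditionally on $\cF_{2m}$, the event $F_m$ depends only on fresh, independent Bernoulli$(p)$ variables at levels $2m+1$ and $2m+2$, so $\bbP(F_m\mid\cF_{2m})=p(1-p)^2$. On $F_m$, the length-two edge from $(r_m,2m)$ in the $\eta^{m,1}$-process reaches $(r_m,2m+2)$ via the enhancement, whereas in the $\eta^{m,0}$-process the standard dynamics starting from $\xi_{2m}^{(0)}$ cannot reach row $2m+2$ at any position $\ge r_m$: the two length-one extensions of $(r_m,2m)$ are blocked at level $2m+1$, and $(r_m+1,2m)\notin\xi_{2m}^{(0)}$ by definition of $r_m$. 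Any remaining ``leftward detour'' reaching $(r_m,2m+2)$ through $(r_m-1,2m+2)$ fails with positive conditional probability (bounded below uniformly), so with probability of order $p(1-p)^2$ the gain $r_{2(m+1)}^{(\eta^{m,1})}-r_{2(m+1)}^{(\eta^{m,0})}$ is at least $1$.

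The remaining step -- showing that this local gain at time $2m+2$ propagates to a contribution of at least $2p(1-p)^2$ to the Russo summand at time $2n$ -- is the main (though essentially classical) obstacle. Under the standing assumption $\alpha(p,0)>-\infty$, the extra reached vertex generates a descendant sub-cluster in the $\eta^{m,1}$-process whose rightmost envelope, coupled with the $\eta^{m,0}$-process, dominates a fresh instance of the unenhanced dynamics; the expected long-run excess contribution to $r_{2n}^{(\eta^{m,1})}-r_{2n}^{(\eta^{m,0})}$ is therefore bounded below by a positive constant, and a careful accounting yields the announced lower bound $2p(1-p)^2$. The restart/subadditivity mechanics are exactly those used in the classical proof of strict monotonicity of $\alpha(p)$ in $p$ (cf.\ \cite{Durrett80}, \cite{Liggett05}), with only cosmetic modifications for the enhancement. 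Part (ii) is handled by the symmetric event at $l_{2m}^{(0)}$, where now the length-two enhancement prevents the left edge from advancing rightward, producing a symmetric gain and yielding $\beta(p,0)-\beta(p,q)\ge qp(1-p)^2$.
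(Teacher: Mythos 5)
Your skeleton (Russo's formula in $q$ over the $n$ row variables, a local pivotal event at the right edge, then propagation of the gain up to time $2n$) is a legitimate alternative to the paper's route, which instead telescopes over $Mn$ tiny increments of $q$ and uses the stopping time $\tau=\inf\{n\in 2\bbN : r_n^{q}<r_n^{q'}\}$ together with a per-even-time separation event of probability $(q'-q)p(1-p)^2$ (new length-two edge open, $r_m+(0,2)$ open, $r_m+(0,1)$ and $r_m+(-1,2)$ closed); see \eqref{eq_for_step2} and \eqref{eq_bound_tau}. However, your proposal has a genuine gap exactly where the work lies: the propagation step. The paper's Step 1 proves, via monotonicity and translation invariance, that for infinite $A\subset B$ with $\max B>\max A$ one has $\bbE_{p,q}[\max\xi_{m,n}(B)-\max\xi_{m,n}(A)]\ge 1$; this is what converts a separation of the two coupled right edges at time $2m+2$ into an expected contribution of at least $1$ at time $2n$. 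You replace this with the assertion that the extra vertex generates a descendant sub-cluster whose ``rightmost envelope dominates a fresh instance of the unenhanced dynamics,'' and that ``a careful accounting'' yields $2p(1-p)^2$ per summand. That restart heuristic is not the classical argument and does not by itself lower-bound $\bbE[r_{2n}(\omega,\eta^{m,1})-r_{2n}(\omega,\eta^{m,0})]$: the descendant cluster of the extra site may die out, and even on survival its rightmost extent must be compared with the right edge of the $\eta^{m,0}$-process restarted from the larger set $\xi_{2m+2}^{\eta^{m,0}}$ --- which is precisely the content of the missing comparison lemma. As written, the heart of the proof is deferred without proof.

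The quantitative accounting also does not close. First, your event $F_m$ does not block $(\rho-1,2m+2)$, so the $\eta^{m,0}$-process can still reach level $2m+2$ at positions $\ge\rho$ by coming up strictly to the left and running rightwards along the always-open horizontal edges through the open site $(\rho,2m+2)$; patching this with a ``detour fails'' event costs at least one further factor of order $1-p$, so your separation event has conditional probability strictly below $p(1-p)^2$ (blocking $(\rho-1,2m+2)$ inside the event, as the paper does, is the clean fix, and then the extra condition on $(\rho+1,2m+1)$ is superfluous). Second, since the comparison lemma yields an expected propagated gain of $1$ (not $2$), each Russo summand is bounded below by at most about $p(1-p)^2$, whereas, after integrating over $[0,q]$ and dividing by $2n$, a per-summand bound $c$ only gives $\alpha(p,q)-\alpha(p,0)\ge qc/2$: you correctly identify that you would need $c=2p(1-p)^2$, but your mechanism cannot deliver it, so even granting the deferred lemma your route falls short of the stated constant (it would still give a strict inequality, which is all that \cref{lem:alpha:strict} needs, but not \eqref{eq_alpha_mon} as stated). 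A smaller slip: in comparing $\eta^{m,1}$ with $\eta^{m,0}$ the local event should be anchored at the common right edge of these two processes at level $2m$ (they share all enhancements in rows other than $m$), not at $r^{(0)}_{2m}$ of the fully unenhanced process, which may lag behind it.
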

\begin{proof}
To prove~\eqref{eq_alpha_mon}, we fix any $p$ such that $\alpha(p,0)> -\infty$, and for $q\geq 0$ we let $r_n^q$ denote the random variable $r_n$ under $\bbP_{p,q}$. The proof will follow a similar strategy as that used to prove \cite[Equation (12)]{Durrett84}, proceeding in three main steps as follows.
\begin{enumerate}
\item\label{alpha:step:1} We first show that for any infinite sets~$A,B \subset \mathbb Z$ with~$A \subset B$ and~$\max B > \max A$, and for any~$m \le n$, we have
\begin{equation}\label{eqn:Durret rnA rnB inequality}
\bbE_{p,q}[\max\xi_{m,n}(B) - \max\xi_{m,n}(A)] \ge 1.
\end{equation}
\item\label{alpha:step:2} We then couple $\bbP_{p,q}$ with $\bbP_{p,q'}$ where $q'>q\geq 0$ under a common law $\bbP$ and use~\eqref{eqn:Durret rnA rnB inequality} to show that
		\begin{equation}
		\label{eq_for_step2}
	\bbE [r_{2n}^{q'} - r^{q}_{2n}] \geq 1-(1-(q'-q)p(1-p)^2)^n.
		\end{equation}
\item\label{alpha:step:3} We then tie this together to prove~\eqref{eq_alpha_mon}.
\end{enumerate}

	We start with Step \ref{alpha:step:1}. For concreteness, we take~$0 = m \le n$ (the proof is the same for~$0 < m \le n$). We proceed as in  \cite[Equation (13)]{Durrett84}. By the assumptions that~$A \subset B$ and~$x^* := \max B > \max A$, and by monotonicity of~$\xi_n(\cdot)$ with respect to set inclusion, we have
\[\max\xi_n(B) - \max\xi_n(A) \ge  \max\xi_n(B) - \max\xi_n(B \backslash \{x^*\}).\]
Using the definition of~$\xi_n(\cdot)$, we have
\begin{align*}
    \bbE_{p,q}[\max\xi_n(B) - \max\xi_n(B \backslash \{x^*\})] & = \bbE_{p,q}\left[\left(\max\xi_n(\{x^*\})- \max \xi_n(B \backslash \{x^*\}) \right)^+ \right].
\end{align*}
By  monotonicity and translation invariance, the right-hand side is larger than
\[\bbE_{p,q}\left[ \left( \max \xi_n(\{0\}) -  \max \xi_n(-\bbN\setminus \{0\}) \right)^+\right] = \bbE_{p,q}[\max \xi_n(-\bbN) - \max \xi_n(-\bbN\setminus \{0\}) ];\]
by translation invariance, the right-hand side equals 1. This proves~\eqref{eqn:Durret rnA rnB inequality}.

We now turn to Step \ref{alpha:step:2}. We couple $\bbP_{p,q}$ with $\bbP_{p,q'}$ under a common law $\bbP$ in the natural way: we first sample a site percolation configuration under $\bbP_{p,0}$, then for each set of vertical edges joining height $2n$ to $2n+2$ we independently sample $U_n\sim \textsf{Uniform}[0,1]$, and add the corresponding vertical edges under $\bbP_{p,q}$ (respectively $\bbP_{p,q'}$) precisely when $U_n \leq q$ (respectively $U_n \leq q'$). In the coupled model, we write~$(\xi^{q}_n(A))_{n \ge 1}$ for the process with parameters~$(p,q)$ and~$(\xi^{q'}_n)_{n\ge 1}$ for the process with parameters~$(p,q')$. In particular,~$r^q_n = \max\xi_n^q(-\bbN)$ and~$r^{q'}_n = \max\xi^{q'}_n(-\bbN)$.

We now set
\[
	\tau = \inf\{ n\in 2\mathbb N: r^{q}_{n} < r_{n}^{q'}\}
\]
	 For all~$m \in 2 \mathbb N$, on the event~$\{ \tau = m\}$ we have
	\begin{equation}
	\label{eq_assumptions_of_durr}
	\xi_m^q(-\bbN) \subset \xi_m^{q'}(-\bbN)  \quad \text{and}\quad \max \xi_m^{q'}(-\bbN) = r_m^{q'} > r_m^q = \max \xi_m^q(-\bbN).
	\end{equation}
	Let~$(\mathcal F_m)_{m \ge 0}$ denote the filtration generated by the percolation configuration: for each~$m$, $\mathcal F_m$ is the~$\sigma$-algebra generated by the percolation configuration (including the uniform random variables) up to (and including) height~$m$. On the event~$\{\tau \le n\}$ we bound
	\begin{align*}
		\bbE [r^{q'}_n - r^q_n \mid \mathcal F_\tau ] &= \bbE[\max \xi_{\tau,n}^{q'}(\xi_\tau^{q'}(-\bbN)) - \max \xi_{\tau,n}^{q}(\xi_\tau^{q}(-\bbN)) \mid \mathcal F_\tau]\\
		&\ge \bbE[\max \xi_{\tau, n}^{q}(\xi_\tau^{q'}(-\bbN)) - \max \xi_{\tau, n}^{q}(\xi_\tau^{q}(-\bbN)) \mid \mathcal F_\tau] \ge 1,
	\end{align*}
	where the last inequality follows from~\eqref{eqn:Durret rnA rnB inequality}, whose assumptions have been verified in~\eqref{eq_assumptions_of_durr}. We have thus proved:
	\[
		\bbE[r_n^{q'} - r_n^q] \ge\bbE[ \bbE[r_n^{q'} - r_n^q \mid \mathcal{F}_\tau] \cdot  \1\{\tau \le n\}] \ge \bbP(\tau \le n).
	\]

	To bound this latter probability, note that at each time $m \in 2\mathbb N$ there is a probability at least $(q'-q)p(1-p)^2$ that the vertical edge leading from $r^{q}_{m}$ to $r^{q}_{m} + (0,2)$ is open under $\bbP_{p,q'}$ but not under $\bbP_{p,q}$, that the site $r^{q}_{m} + (0,2)$ is also open, but that the two sites corresponding to $r^{q}_{m} + (0,1)$ and $r^{q}_{m} + (-1,2)$ are closed, in which case $r^{q}_{m+2} < r^{q'}_{m+2}$. Hence, 
	\begin{equation}
		\label{eq_bound_tau}
		\bbP ( \tau > n) \leq (1-(q'-q)p(1-p)^2)^{\lfloor n/2\rfloor}, 
	\end{equation}
	from which the statement of~\eqref{eq_for_step2}  follows.

For Step \ref{alpha:step:3}, we again follow the strategy of Durrett, take a large integer $M$, set~$\delta = \frac{q}{M}$ and write 
\begin{align*}
\frac{1}{n}\bbE \left[r_{2n}^q - r^0_{2n}\right] = \frac{1}{n}\sum_{m=1}^{Mn} \bbE \left[r_{2n}^{\frac{m\delta}{n}} - r_{2n}^{\frac{(m-1)\delta}{n}}\right] &\geq M\left(1-\left(1-\frac{\delta p(1-p)^2}{n}\right)^n \right).
\end{align*}
Taking $n \to \infty$ and then $M \to \infty$ we get 
\begin{align*}
 \lim_{n \to \infty} \frac{1}{n}\bbE \left[r_{2n}^q - r^0_{2n}\right] \geq \lim_{M \to \infty} M \left(1-\exp \left\{-\frac{q p(1-p)^2}{M}\right\}\right) = qp(1-p)^2,  
\end{align*}
which is the desired statement.

The proof of~\eqref{eq_beta_mon} goes in the exact same way; note in particular that we can get the same expression in the bound~\eqref{eq_bound_tau}.
\end{proof}

\begin{lemma}
\label{lem:alpha:critical}
	If~$p > \pc(0)$, then~$\beta(p,0)^{-1} = \alpha(p,0) \ge 1$.
\end{lemma}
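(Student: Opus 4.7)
My plan splits the proof into the two assertions $\alpha(p,0)\ge 1$ and $\alpha(p,0)\beta(p,0)=1$. For the inequality, I would combine monotonicity of the edge speeds in $p$ with Griffeath's classical identity at criticality. Monotonicity is immediate from the standard monotone coupling of site-percolation configurations: for $p\le p'$ one couples so that $\xi^{p}_n(A)\subseteq\xi^{p'}_n(A)$ almost surely, whence $r^{p}_n\le r^{p'}_n$ and $l^{p}_n\ge l^{p'}_n$ pointwise. Together with the inf/sup formulas for $\alpha$ and $\beta$ from the proof of \cref{lem_alpha_beta}, this shows that $p\mapsto\alpha(p,0)$ is non-decreasing and $p\mapsto\beta(p,0)$ is non-increasing. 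Since $\alpha(\pc(0),0)=\beta(\pc(0),0)=1$ by Griffeath~\cite{Griffeath81} (already invoked in Step~2 of \cref{sec:outline}), this yields $\alpha(p,0)\ge 1\ge\beta(p,0)$ for every $p>\pc(0)$.

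For the duality $\alpha(p,0)\beta(p,0)=1$, I would exploit the transposition symmetry $\tau:(x,n)\mapsto(n,x)$, under which $\bbP_{p,0}$ is invariant because sites are i.i.d.\ and both horizontal and vertical length-one edges are deterministically open. Under the rotation $(x,n)\mapsto(x+n,x-n)$ the model is identified with standard $(\pm 1,1)$-oriented site percolation, so the classical shape theorem for supercritical oriented percolation (see, e.g., \cite{Durrett84}*{Section~11} or \cite{Liggett05}) applies: conditionally on survival of the cluster $C=\{(x,n):(0,0)\to(x,n)\}$ of the origin, $C/n$ converges almost surely to a deterministic convex cone $A\subset\bbR_{\ge 0}^2$ with apex at the origin. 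The $\tau$-invariance of the law of $C$ forces $A$ to be symmetric about the diagonal $x=n$, so $A$ is bounded by two rays with reciprocal slopes $\alpha_c$ and $1/\alpha_c$. In particular, the right envelope $R_n:=\max\{x:(x,n)\in C\}$ and the lower envelope $L_n:=\min\{x:(x,n)\in C\}$ satisfy $R_n/n\to\alpha_c$ and $L_n/n\to 1/\alpha_c$ almost surely on $\{|C|=\infty\}$.

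To conclude I would identify the edge speeds of the half-axis processes with the cluster slopes. Since $-\bbN\times\{0\}$ and $\bbN\times\{0\}$ are infinite starting sets, by Borel--Cantelli and ergodicity almost surely infinitely many of their vertices support infinite clusters, with the closest surviving one to the origin lying at $O(1)$ distance; by translation invariance, the dominant contribution to $r_n$ (resp.\ $l_n$) comes from these nearby surviving vertices, yielding $r_n/n\to\alpha_c$ and $l_n/n\to 1/\alpha_c$ almost surely. This gives $\alpha(p,0)=\alpha_c$ and $\beta(p,0)=1/\alpha_c$, hence $\alpha(p,0)\beta(p,0)=1$. I anticipate that the main obstacle will be precisely this last identification: the shape theorem is naturally phrased for a single-site cluster conditioned on survival, while $\alpha$ and $\beta$ are defined unconditionally via infinite starting sets, so bridging the two requires care, though only with standard ingredients (translation invariance, Borel--Cantelli, and a domination argument showing that the contribution of starting vertices far from $0$ is negligible).
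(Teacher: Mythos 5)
Your overall architecture -- the transposition symmetry about the diagonal to force $\alpha\beta=1$, plus a link between the origin's cluster and the half-line edge speeds -- is close in spirit to the paper's proof, but the identification step that you yourself flag is a genuine gap, and the tools you name would not close it. Translation invariance and Borel--Cantelli give the easy direction, $\liminf r_n/n \ge \alpha_c$ (start from a nearby surviving vertex), but they say nothing about the upper bound $\limsup r_n/n \le \alpha_c$: you must rule out that some starting vertex of $(-\bbN)\times\{0\}$ -- a distant one with a large finite cluster, or a nearby surviving one spreading atypically fast at finite times -- pushes the half-line right edge beyond $(\alpha_c+o(1))n$, and ``the dominant contribution comes from nearby surviving vertices'' is a claim, not an argument, since it involves a supremum over infinitely many starting points. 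What closes it is either uniform large-deviation control of the edge (in the spirit of \cref{lem_bound_speed}) or, much more simply, the planar non-crossing property of oriented paths: on the event $|\cC_0|=\infty$, any open path from $(-\bbN)\times\{0\}$ reaching level $n$ strictly to the right of $\max\xi_n(\{0\})$ must meet an infinite open path from the origin, so in fact $r_n=\max\xi_n(\{0\})$ and $l_n=\min\xi_n(\{0\})$ for all $n$ on this event. Once you have this sandwiching, no shape theorem is needed at all: $r_{2n}/(2n)\to\alpha(p,0)$ and $l_{2n}/(2n)\to\beta(p,0)$ almost surely by \cref{lem_alpha_beta}, so agreement on the positive-probability survival event identifies the conditional cluster slopes with the unconditioned edge speeds. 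This non-crossing fact is precisely what the paper's proof uses in \eqref{eq_note_cluster}.

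Two further remarks on the comparison. The paper avoids conditional limits and the shape theorem altogether: it characterizes $\alpha$ (resp.\ $\beta$) as the threshold $a$ at which $\cC_0\cap\{(x,n): x\ge an\}$ (resp.\ $x\le an$) is almost surely finite, applies the reflection $\Phi(x,y)=(y,x)$ to the law of $\cC_0$ to get $\beta=1/\alpha$, and combines this with $\beta\le\alpha$ (again from the sandwiching) to conclude $\alpha\ge1\ge\beta$ with no input at criticality. Your alternative route to $\alpha(p,0)\ge1$ via monotonicity plus Griffeath's identity is citable, but it runs the logic backwards relative to the paper, whose \cref{cor_at_crit} deduces the critical statement from this very lemma; moreover, transferring the classical statement to the present up/right coordinates and half-line initial condition requires some routine adaptation, which is part of why the paper reproves these facts in its own setting.
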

\begin{proof}
We write~$\cC_0 := \{(x,n){\in\bbZ^2}: x \in \xi_n(\{0\})\}$ {for the cluster of the origin.}
Fix $p > \pc(0)$, so that $\mathbb P_{p,0}(|\mathcal C_0| = \infty) > 0$. Throughout this proof, we abbreviate~$\alpha = \alpha(p,0)$ and~$\beta = \beta(p,0)$.

Note that for any $n\in\bbN$
\begin{align*}
\max \xi_n(\{0\}) &{}\le r_n,&\min \xi_n(\{0\}) &{}\ge l_n
\end{align*}
and on the event $|\cC_0|=\infty$, for all $n\in\bbN$,
\begin{equation}
\label{eq_note_cluster}
l_n = \min \xi_n(\{0\}) \le \max \xi_n(\{0\}) = r_n,
\end{equation}
{using the non-crossing property of simple paths.} Taken together with~$\frac{l_{2n}}{2n} \xrightarrow{n \to \infty} \beta$ and~$\frac{r_{2n}}{2n} \xrightarrow{n \to \infty} \alpha$,~\eqref{eq_note_cluster} implies that~$\beta \le \alpha$.

For~$a > 0$, we write
\[V_-(a):= \{(v,n) \in \mathbb Z \times 2 \mathbb N:\; v \le an\},\quad V_+(a):=\{(v,n) \in \mathbb Z \times 2 \mathbb N:\; v \ge an\}.\]
 We claim that
\begin{equation}
\label{eq_note_alpha}
    \alpha = \inf\{a > 0:\; \mathbb P_{p,0}(|\cC_0 \cap V_+(a)| < \infty) = 1\}.
\end{equation}
To see this, first take~$a > \alpha$. Since~$\max \xi_n(\{0\}) \le r_n \; \forall n$ and~$\frac{r_{2n}}{2n} \xrightarrow{n \to \infty} \alpha$, we see that almost surely there are only finitely many~$n \in 2 \mathbb N$ such that~$\max \xi_n(\{0\}) \ge an$, so there are almost surely only finitely many points in~$\cC_0 \cap V_+(a)$. On the other hand, if~$a < \alpha$, we have
\begin{align*}
\mathbb P_{p,0}(|\cC_0 \cap V_+(a)| = \infty) &\ge \mathbb P_{p,0}(\max \xi_n(\{0\}) \ge an \text{ for infinitely many } n \in 2 \mathbb N) \\
    &\ge \mathbb P_{p,0}(|\cC_0| = \infty) > 0,
\end{align*}
where the second inequality follows from~\eqref{eq_note_cluster} and~$\frac{r_{2n}}{2n} \xrightarrow{n \to \infty} \alpha$. This concludes the proof of~\eqref{eq_note_alpha}. Similarly, we have
\begin{equation}
    \label{eq_same_beta}
\beta = \sup\{b > 0:\; \mathbb P_{p,0}(|\cC_0 \cap V_-(b)| < \infty) = 1\}.
\end{equation}
Let~$\Phi: \mathbb R^2 \to \mathbb R^2$ be the reflection about the diagonal~$y=x$, that is,~$\Phi(x,y) = (y,x)$. Since we are taking~$q = 0$, our model has the symmetry~$\cC_0 \stackrel{\text{(law)}}{=} \Phi(\cC_0)$. In particular, for any~$a > 0$,
	\begin{align*}
		\mathbb P_{p,0}( | \cC_0 \cap V_-(1/a) | < \infty) =\mathbb P_{p,0}( |\cC_0 \cap V_+(a)| < \infty).
	\end{align*}
	Together with~\eqref{eq_note_alpha} and~\eqref{eq_same_beta}, this gives~$\beta = 1/\alpha$. We already had~$\beta \le \alpha$, so we obtain~$\beta \le 1$ and~$\alpha \ge 1$.
\end{proof}

\begin{corollary}\label{cor_at_crit}
	We have~$\alpha(\pc(0),0) \ge 1$ and~$\beta(\pc(0),0) \le 1$.
\end{corollary}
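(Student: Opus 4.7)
The plan is to extend \cref{lem:alpha:critical} from $p>\pc(0)$ to $p=\pc(0)$ by a soft semicontinuity argument. Specifically, it suffices to show that $p\mapsto\alpha(p,0)$ is upper semicontinuous and $p\mapsto\beta(p,0)$ is lower semicontinuous on $(0,1)$. Given this, passing to the limit $p\downarrow\pc(0)$ in the bounds of \cref{lem:alpha:critical} yields
\begin{align*}
\alpha(\pc(0),0)&{}\ge \limsup_{p\downarrow\pc(0)}\alpha(p,0)\ge 1,\\
\beta(\pc(0),0)&{}\le \liminf_{p\downarrow\pc(0)}\beta(p,0)\le 1,
\end{align*}
which are the desired conclusions.

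Recalling from \eqref{eq_def_of_alpha} and its $\beta$-analog that $\alpha(p,0)=\inf_{n\ge 1}\bbE_{p,0}[r_{2n}]/(2n)$ and $\beta(p,0)=\sup_{n\ge 1}\bbE_{p,0}[l_{2n}]/(2n)$, and using that an infimum (respectively supremum) of continuous functions is upper (respectively lower) semicontinuous, the task reduces to showing that for each fixed $n$, the maps $p\mapsto\bbE_{p,0}[r_{2n}]$ and $p\mapsto\bbE_{p,0}[l_{2n}]$ are continuous on $(0,1)$.

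For this, I would invoke the standard monotone coupling, assigning independent $\mathrm{Unif}(0,1)$ labels $(U_v)_{v\in\bbZ^2}$ to sites and declaring $v$ open at parameter $p$ iff $U_v\le p$. Under this coupling, $p\mapsto r_{2n}(p)$ is a nondecreasing, integer-valued step function whose jumps occur at the thresholds $T_k=\min_\gamma\max_{v\in\gamma}U_v$, with $\gamma$ ranging over the countably many admissible paths from $-\bbN\times\{0\}$ to some $(x,2n)$ with $x\ge k$. Each such $T_k$ is atomless (being bounded above by $\max_{v\in\gamma}U_v$ for a single path $\gamma$, whose distribution is continuous), so a countable-union argument shows that, for any fixed $p_0\in(0,1)$, almost surely no $T_k$ equals $p_0$, whence $r_{2n}(\cdot)$ is almost surely continuous at $p_0$. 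Combined with the $\mathsf{Geometric}$-based integrable upper bound $r_{2n}\le r'_{2n}$ evaluated at $p_0+\delta$ from the proof of \cref{lem_alpha_beta}, together with a symmetric ``leftmost reachable'' geometric bound for the negative part of $r_{2n}$, dominated convergence then yields continuity at $p_0$; the argument for $l_{2n}$ is analogous, and simpler since $l_{2n}\ge 0$.

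The main (mild) technicality is controlling the negative part of $r_{2n}$, which is not a priori bounded, but this is handled by the symmetric geometric-run construction just alluded to. Beyond that, the argument is entirely routine, and in particular requires no new percolation input beyond \cref{lem:alpha:critical} itself.
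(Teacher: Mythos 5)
Your overall route is the same as the paper's: the paper likewise deduces the corollary from \cref{lem:alpha:critical} by noting that $\alpha(\cdot,0)$, being the infimum in \eqref{eq_def_of_alpha} of functions $p\mapsto\bbE_{p,0}[r_{2n}]/(2n)$ asserted to be continuous, is upper semicontinuous (the paper phrases this as right-continuity of a non-decreasing function), and then lets $p\downarrow\pc(0)$; same for $\beta$. The only place you go beyond the paper is in trying to actually prove continuity of $p\mapsto\bbE_{p,0}[r_{2n}]$, which the paper leaves as an assertion, and there your argument has a genuine flaw. The parenthetical justification that each jump threshold $T_k$ is atomless ``being bounded above by $\max_{v\in\gamma}U_v$ for a single path $\gamma$'' is a non sequitur: domination by an atomless random variable does not imply atomlessness, and the worry here is precisely an infimum over infinitely many paths (with starting points drifting to $-\infty$) that is not attained. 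Worse, $\bbP(T_k\le t)$ is exactly the infinite-volume connection probability at parameter $t$, so atomlessness of $T_k$ at $p_0$ is essentially equivalent to the continuity in $p$ you are trying to establish; as written, that step is circular.

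The gap is repairable by a short localisation argument. For instance, for fixed $\epsilon>0$ the expected number of paths ending at a given site of level $2n$ whose non-initial labels are all $\le 1-\epsilon$ is finite (a path with $d$ non-initial sites contributes $(1-\epsilon)^d$, and for fixed height there are only $\binom{d}{2n}$ such paths), so almost surely only finitely many such paths exist; hence the infimum defining $T_k$ is attained whenever $T_k<1$, and an attained infimum equal to $p_0$ would force some $U_v=p_0$, an event of probability zero. Alternatively, truncate to paths starting in $[-M,0]\times\{0\}$ and bound the error, uniformly for $p$ bounded away from $1$, by the expected number of longer paths, making $\bbP_{p,0}(r_{2n}\ge x)$ a locally uniform limit of polynomials in $p$ and hence continuous. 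With either repair, your domination/uniform-integrability step gives continuity of the expectations, and the rest of your argument goes through and coincides in substance with the paper's proof.
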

\begin{proof}
The function~$p \mapsto \alpha(p,0)$ is non-decreasing, and it is the decreasing limit of the continuous functions~$p \mapsto \inf_{m \le n} (\mathbb E_{p,0}[r_m]/m)$, as~$n \to \infty$. From this, it is easy to deduce that~$p \mapsto \alpha(p,0)$ is right continuous, so it follows from Lemma \ref{lem:alpha:critical} that~$\alpha(\pc(0),0) \ge 1$. An analogous argument applies to~$\beta$.
\end{proof}

\begin{proof}[Proof of Lemma \ref{lem:alpha:strict}]
This follows from combining Lemma \ref{lem_monotone_alpha} and Corollary \ref{cor_at_crit}.
\end{proof}

\subsection{Supercritical box crossing}
\label{app:box}
Our next goal is to prove Lemma \ref{lem:crossing} following \cite{Durrett84}. We start by proving an upper tail bound for the right edge $r_m$.
\begin{lemma}\label{lem_bound_speed}
For any~$p \in (0,1)$,~$q \in [0,1)$ and~$\delta > 0$ there exist~$c > 0$ and~$n_0 \in \bbN$ such that for all~$n \ge n_0$,
\begin{align}\label{eq_max_ineq}
\bbP_{p,q}\left( \exists m \le n:\; r_m   > \alpha(p,q)\cdot  m+\delta n\right) &{}< e^{-cn},\\
\label{eq_max_ineq2}
\bbP_{p,q}\left( \exists m \in \{1,\ldots, n\}:\; \max \xi_{1,m}({-\bbN})  > \alpha(p,q)\cdot m +\delta n\right) &{}< e^{-cn},\\
\nonumber\bbP_{p,q}\left( \exists m \le n:\; l_m < \beta(p,q)\cdot  m - \delta n\right) &{}< e^{-cn},\\
\nonumber\bbP_{p,q}\left(\exists m \in \{1,\ldots, n\}: \;\min \xi_{1,m}(\bbN) < \beta(p,q)\cdot m - \delta n \right)&{}< e^{-cn}.
\end{align}
\end{lemma}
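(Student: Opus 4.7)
The plan is to prove \eqref{eq_max_ineq}; the three other bounds follow by the same argument, after obvious modifications: the statements involving $\xi_{1,m}$ are handled by starting the restart scheme from level $1$ rather than level $0$, and those for $l_m$ and $\min \xi_{1,m}(\bbN)$ are obtained from the mirror symmetry between right and left (with $\alpha$ replaced by $\beta$). The approach will be a standard Cram\'er--Chernoff bound resting on two ingredients already present in the proof of \cref{lem_alpha_beta}: the restart property of the process at even times, and the stochastic domination $r_{2k} \le r'_{2k} = \sum_{i=0}^{2k-1} Y_i$ by a sum of $2k$ i.i.d.\ $\operatorname{Geometric}(1-p)$ random variables, which guarantees that $r_{2k}$ has a finite exponential moment near zero.

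I would first fix $k = k(\delta, p, q)$ large enough that $\bbE_{p,q}[r_{2k}] \le 2k(\alpha(p,q) + \delta/8)$, which is possible by \eqref{eq_def_of_alpha}. Iterating the restart then yields, for each $j \ge 0$, the stochastic domination $r_{2kj} \le \sum_{i=0}^{j-1} R_i$, with the $R_i$ i.i.d.\ of the law of $r_{2k}$. For a general $m = 2kj + s$ with $0 \le s < 2k$, one further restart gives $r_m \le \sum_{i=0}^{j-1} R_i + S$, where $S$ is independent of the $R_i$ and stochastically dominated by $r'_{2k}$. The event $\{r_m > \alpha m + \delta n\}$ is then contained in the union $\{S > \delta n/2\} \cup \{\sum_i R_i > \alpha m + \delta n/2\}$.

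The first event has probability at most $e^{-c_1 n}$ by an exponential Markov inequality applied to $r'_{2k}$, whose moment generating function depends only on $k$ and $p$. For the second, a direct computation using $\bbE[R] \le 2k(\alpha + \delta/8)$, $|\alpha s| \le 2k|\alpha|$, and $kj \le n/2$ shows that the excess $(\alpha m + \delta n/2) - j \bbE[R]$ is at least $\delta n/4$ for $n$ large enough. Cram\'er's theorem for i.i.d.\ sums then bounds the corresponding probability by $\exp(-j I(\delta n/(4j)))$, where $I$ is the Legendre transform of $\log \bbE[e^{\lambda r_{2k}}]$.

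The main obstacle will be to verify that $j I(\delta n/(4j)) \ge c_2 n$ uniformly in $j \in \{0, 1, \ldots, \lfloor n/(2k) \rfloor\}$. This follows from standard properties of Legendre transforms of log-moment generating functions with a finite exponential moment near zero: $I$ is convex, positive on $(0, \infty)$, quadratic-like near zero, and grows at least linearly at infinity (because the moment generating function of $r_{2k}$ blows up at a finite $\lambda_{\max}$). For $j$ of order $n$ one exploits that the argument $\delta n/(4j)$ stays bounded away from zero; for small $j$ one exploits the linear growth of $I$ at infinity. A final union bound over $m \in \{1, \ldots, n\}$ absorbs the factor $n$ into the exponential, completing the proof of \eqref{eq_max_ineq} with a possibly smaller $c$ and larger $n_0$.
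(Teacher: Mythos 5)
Your argument for \eqref{eq_max_ineq} is correct and is essentially the paper's: both proofs fix a block length at which the expected right-edge displacement per level is within a fraction of $\delta$ of $\alpha(p,q)$ (possible by \eqref{eq_def_of_alpha}), dominate the edge at block multiples by an i.i.d.\ sum using the restart property at even heights, control the within-block remainder by the geometric domination from the proof of \cref{lem_alpha_beta}, and finish with exponential-moment bounds. The only substantive difference is that you apply a Chernoff/Cram\'er bound separately for each $m$ and then union bound over $m\le n$, whereas the paper uses a single maximal large-deviation inequality for a random walk with negative drift; both are fine. You should, however, dispose of the degenerate case $\alpha(p,q)=-\infty$ at the outset (as the paper does), since your choice of $k$ with $\bbE_{p,q}[r_{2k}]\le 2k(\alpha+\delta/8)$ presupposes $\alpha$ finite.

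Two of your ``obvious modifications'' are not obvious, and one is based on a false claim. First, there is no mirror symmetry between right and left in this model: the horizontal edges are oriented, only $((x,n),(x+1,n))$ exists, so the reflection $x\mapsto -x$ is not a symmetry of $\bbP_{p,q}$. This is already visible in \cref{lem_alpha_beta}, where $\alpha\in[-\infty,\infty)$ while $\beta\in(0,\infty]$, and for $q>0$ even the diagonal symmetry used elsewhere in the paper is unavailable. The bounds for $l_m$ and $\min\xi_{1,m}(\bbN)$ must instead be proved by rerunning your argument with the superadditive restart structure of $l$ and $\beta=\sup_n\bbE_{p,q}[l_{2n}]/(2n)$; this is in fact easier (since $l_{2k}\ge 0$, the downward deviations of a block have trivially finite exponential moments), but it is a repetition of the argument, not a consequence of symmetry. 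Second, for \eqref{eq_max_ineq2}, ``starting the restart scheme from level $1$'' hides a step: the law is invariant only under even vertical shifts, so blocks started at odd height have increments distributed as $\max\xi_{1,2k+1}(-\bbN)$, and you still must show their mean is at most $2k(\alpha+\delta/8)$, which does not follow directly from \eqref{eq_def_of_alpha}. The clean fix (the one in the paper) is to peel off the first level: dominate $\max\xi_{1,m}(-\bbN)$ by $\mathcal{Z}+r'_{m-2}$, with $\mathcal{Z}\sim\max\xi_{1,2}(-\bbN)$ controlled by a geometric variable and $(r'_m)_{m\ge0}$ an independent copy of $(r_m)_{m\ge0}$, and then apply \eqref{eq_max_ineq}. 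Both issues are repairable by these standard adjustments, but as written they are asserted rather than proved.
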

\begin{proof}
We will only prove the first two bounds, as the other two are treated in the same way. Fix~$p,q,\delta$ as in the statement. We abbreviate~$\alpha = \alpha(p,q)$. The desired inequalities are trivial in case~$\alpha = - \infty$, so we assume that~$\alpha \in (-\infty,\infty)$. 

Using the definition of~$\alpha$ in~\eqref{eq_def_of_alpha}, we choose~$M \in 2\bbN$ such that
\begin{equation}
\label{eq_for_choice_M}
\bbE_{p,q}[r_M - \alpha M] < \frac{\delta}{4}M.
\end{equation}
We bound the left-hand side of~\eqref{eq_max_ineq} by
\begin{align}\label{eq_first_term_ldp}
&\bbP_{p,q}\left( \exists k \le \frac{n}{M}:\; r_{Mk}   > \alpha Mk+\frac{\delta}{2} n\right) \\
&\quad + \bbP_{p,q} \left(\exists k \le \frac{n}{M},\; j \in \{0,\ldots, M-1\}: \; r_{Mk+j} - r_{Mk} > {\alpha j+}\frac{\delta}{2}n \right).\label{eq_second_term_ldp}
\end{align}
The probability in~\eqref{eq_first_term_ldp} is smaller than
\begin{align*}
\bbP_{p,q} \left( \exists k \le \frac{n}{M}:\; r_{Mk}   > \alpha Mk + \frac{\delta}{4} M k+\frac{\delta}{4} n\right) \le \bbP_{p,q}\left( \exists k \le \frac{n}{M}:\; \sum_{j=0}^{k} X_j > \frac{\delta}{4}n\right),
\end{align*}
where~$X_1,X_2,\ldots$ are independent random variables, with the distribution of~$r_M - (\alpha+\tfrac{\delta}{4}) M$. These random variables have negative expectation by~\eqref{eq_for_choice_M}. They also have some finite exponential moment; this can be seen using the domination by geometric random variables, as in the proof of Lemma \ref{lem_alpha_beta}. By a large deviation bound (see for instance~\cite[Corollary A.2.7]{Lawler10}), the probability on the right-hand side above is bounded by~$e^{-c_0 n}$, for some~$c_0 > 0$ (depending on~$M$) and~$n$ large enough.

	Next, bounding $\min_{0 \le j \le M-1} (\alpha j + \frac{\delta}{2}n) > \frac{\delta}{4}n$ for~$n$ large, and using the stochastic domination described in the proof of Lemma \ref{lem_alpha_beta},  we bound the probability in~\eqref{eq_second_term_ldp} by
	\[\frac{n}{M}\cdot \bbP_{p,q} \left(  \max_{0 \le j < M} r_j  > \frac{\delta}{4}n \right) \le \frac{n}{M} \cdot \bbP_{p,q} \left(\sum_{j=0}^{M-1} Y_j > \frac{\delta}{4}n \right),\]
	where~$Y_0,\ldots, Y_{M-1}$ are independent~$\mathsf{Geometric}(1-p)$. The right-hand side above is again bounded by~$e^{-c_1 n}$ for some constant~$c_1 > 0$ (depending on~$M$) and~$n$ large enough. This concludes the proof of~\eqref{eq_max_ineq}.

	For~\eqref{eq_max_ineq2}, we first write, for any~$m \ge 2$,
	\begin{align*}
		\max\xi_{1,m}({-\bbN}) &= \max \xi_{2,m}(\xi_{1,2}({-\bbN})) \le \max \xi_{2,m}((-\infty, \max \xi_{1,2}({-\bbN}))).
	\end{align*}
 The right-hand side is stochastically dominated by~$\mathcal Z + r_{m-2}'$, where 
	\[\mathcal Z \stackrel{\text{(distr)}}{=} \max \xi_{1,2}({-\bbN}),\qquad (r_m')_{m \ge 0} \stackrel{\text{(distr)}}{=} (r_m)_{m \ge 0},\]
	and~$\mathcal Z$,~$(r_m')_{m \ge 0}$ are independent. Then, the left-hand side of~\eqref{eq_max_ineq2} is smaller than
	\begin{align*}
		&\bbP_{p,q}\left(\mathcal Z > \frac{\delta}{2}n \right) + \bbP_{p,q}\left(\exists m{\le n} :\; r_{m}' > \alpha \cdot (m+2) + \frac{\delta}{2} n  \right).
	\end{align*}
	The first probability above can be bounded using domination by geometric random variables as before, and the second probability can be bounded using~\eqref{eq_max_ineq}.
\end{proof}

\begin{proof}[Proof of Lemma \ref{lem:crossing}]
	Since the two inequalities are proved in the same way, we will only prove the first. Let~$p$,~$q$,~$\delta$ and~$\varepsilon$ be as in the statement {and write $\alpha=\alpha(p,q)$}.

We let~${R} := (-\tfrac{\delta}{2}n,0) + R(u,v) $, that is,~${R}$ is the parallelogram with vertices
\[(-\tfrac{\delta}{2}n,0),\;(\tfrac{\delta}{2}n,0),\;(-(\tfrac{\delta}{2}+\alpha)n,n),\;((\tfrac{\delta}{2}+\alpha)n,n).\] 
From~$\frac{r_{2n}}{2n} \xrightarrow{n \to \infty} \alpha$, it readily follows that~$\bbP_{p,q}(\mathcal A_n) \xrightarrow{n \to \infty}1$, where
\[\mathcal A_n := \left\{-\frac{\delta}{4}n + \alpha m \le r_m \le \frac{\delta}{4}n + \alpha m \; \text{for all } m\le n,\; m \text{ even} \right\}.\]
On this event, there is a{n open} path~$\gamma = ((x_0,n_0),\ldots,(x_k,n_k))$ such that~$n_0 = 0$,~$x_0 \le 0$,~$n_k = n$, $x_k=r_n\ge (\alpha-\delta/4) n$ and
\[ x_j {\le} \frac{\delta}{4}n + \alpha n_j \text{ for all $j$ for which $n_j$ is even}.\]
If multiple such paths~$\gamma$ exist, we choose one using some arbitrary procedure. In order to prove that~$\gamma$ is entirely contained in~$R'$ with high probability, we only need to prove that the following two situations are unlikely:
\begin{enumerate}
    \item\label{crossing:item:1} $\mathcal A_n$ occurs, but~$x_j \ge \frac{\delta}{2}n + \alpha n_j$ for some~$j$ for which~$n_j$ is odd;
    \item\label{crossing:item:2} $\mathcal A_n$ occurs, but~$x_j \le -\frac{\delta}{2}n + \alpha n_j$ for some~$j$.
\end{enumerate}

The occurrence of \ref{crossing:item:1} would imply~$r_{m+1} - r_m > \frac{\delta}{4}n$ for some~$m \in 2 \bbN$,~$m < n$. To rule this out, we bound this difference by a~$\mathsf{Geometric}(1-p)$ random variable, and use a union bound over the choice of~$m$.

The occurrence of \ref{crossing:item:2} would imply that, for some~$m < n$,
\[\max\xi_{m,n}\left(\left(-\infty,-\frac\delta2n+\alpha m\right]\right)\ge \left(\alpha-\frac\delta4\right)n\]
To rule this out, we use Lemma \ref{lem_bound_speed} and a union bound over the choices of~$m$.	
\end{proof}


\backmatter

\bmhead{Acknowledgements}

The authors began this project during a workshop at the 
University of Bath, organized by Alexandre Stauffer and DV, 
and funded by a Heilbronn Focused Research Grant. 
We thank 
Aur\'elia Deshayes, 
Janko Gravner, 
Marcelo Hil\'ario, 
Daniel Kious, 
Alexandre Stauffer, 
R\'eka Szab\'o, 
Vincent Tassion, 
Augusto Teixeira 
and Cristina Toninelli 
for enlightening conversations, and the anonymous referee for helpful comments. 
This research was funded in part (IH) by the Austrian Science Fund (FWF) P35428-N 
and in part (EA) by ANR grant ProGraM (ANR-19-CE40-0025). 
BK was supported by a Florence Nightingale Bicentennial Fellowship 
(Oxford Statistics) and a Senior Demyship (Magdalen College). 

\bibliography{Catalan_v1.2}

\end{document}